\theoremstyle{plain}
\newtheorem{thm}{Theorem}[section]
\newtheorem{theorem}[thm]{Theorem}
\newtheorem{lemma}[thm]{Lemma}
\newtheorem{proposition}[thm]{Proposition}
\newtheorem{corollary}[thm]{Corollary}
\newtheorem{prop}[thm]{Proposition}
\theoremstyle{definition}
\newtheorem{rmk}[thm]{Remark}
\newtheorem{defn}[thm]{Definition}
\newtheorem{ex}[thm]{Example}
\newtheorem{claim}[thm]{Claim}
\newtheorem{Definition-Proposition}[thm]{Definition-Proposition}
\renewcommand{\tilde}{\widetilde}
\newcommand{\Q}{{\mathbb Q}}
\newcommand{\R}{{\mathbb R}}
\begin{document}
\title{Existence of log canonical closures}
\author{Christopher D. Hacon}
\date{\today}
\address{Department of Mathematics \\
University of Utah\\
155 South 1400 East\\
JWB 233\\
Salt Lake City, UT 84112, USA}
\email{hacon@math.utah.edu}
\author{Chenyang Xu}
\address{
Beijing International Center of Mathematics Research\\ 5 Yiheyuan Road, Haidian District\\ 
Beijing 100871, China}
\email{cyxu@math.pku.edu.cn}
\address{Department of Mathematics \\University of Utah\\ 
155 South 1400 East \\ 
JWB 233\\
Salt Lake City, UT 84112, USA}\email{cyxu@math.utah.edu}

\begin{abstract}
Let $f:X\to U$ be a projective morphism of normal varieties and $(X,\Delta)$ a dlt pair. We prove that if there is an open set $U^0\subset U$, such that $(X,\Delta)\times_U U^0$ has a good minimal model over $U^0$ and the images of all the non-klt centers intersect $U^0$, then $(X,\Delta)$ has a good minimal model over $U$. 
As consequences we show the existence of log canonical compactifications for open log canonical pairs, and the fact that the moduli functor of stable schemes satisfies the valuative criterion for properness.
\end{abstract}
\thanks{The first author was partially supported by NSF research grant no: 0757897, the second author was partially supported by NSF research grant no: 0969495. We are grateful to O. Fujino, J. Koll\'ar and J. M$^{\rm c}$Kernan for many useful comments and suggestions. We are also in debt to J. Koll\'ar for allowing us to use the materials of \cite{Kollar10} in \S 3.}
\maketitle

\tableofcontents


\section{Introduction}
Throughout this paper, the ground field $k$ is the field of complex numbers. The main purpose of this note is to prove the following theorem.
\begin{theorem}\label{t-lccl}
 Let $f:X\to U$ be a projective morphism of normal varieties,  $\Delta$ a $\mathbb{Q}$-divisor such that  $(X,\Delta)$ is a dlt pair and $S=\lfloor \Delta \rfloor$ the non-klt locus. 
Assume that there exists an open subset  $U^0\subset U$ such that $(X^0,\Delta^0):=(X,\Delta)\times _{U} U^0$ has a good minimal model over $U^0$, and that  
any stratum of $S$ intersects $X^0$. Then $(X,\Delta)$ has a good minimal model over $U$.
\end{theorem}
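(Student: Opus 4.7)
The plan is to run a relative $(K_X+\Delta)$-MMP with scaling over $U$ for $(X,\Delta)$ and prove that it terminates with a semi-ample limit. Pick a divisor $H$ on $X$ that is ample over $U$ and such that $K_X+\Delta+H$ is nef over $U$, and consider the MMP with scaling of $H$. Existence of each flip and divisorial contraction for a dlt pair is provided by BCHM and its extensions, so the real content is (a) termination of the MMP over $U$ and (b) upgrading the resulting nef limit to a semi-ample one.

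Over $U^0$ termination is immediate by a theorem of Birkar: existence of a good minimal model implies termination of any MMP with scaling. The danger is an infinite sequence of steps supported over $U\setminus U^0$. I would control these via Fujino's special termination, which reduces termination in a neighborhood of $S=\lfloor\Delta\rfloor$ to termination of the MMPs induced on the divisorial strata of $S$. By dlt adjunction, each component $S_i\subset S$ carries a dlt pair $(S_i,\Delta_{S_i})$ whose non-klt strata are precisely the strata of $S$ contained in $S_i$. The hypothesis that every stratum of $S$ meets $X^0$ enters here: restricting the good minimal model on $(X^0,\Delta^0)$ to $S_i\cap X^0$ produces a good minimal model there, so by induction on $\dim X$ the pair $(S_i,\Delta_{S_i})$ has a good minimal model over its image in $U$. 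Applying Birkar again on $S_i$ gives termination of the induced MMP, and special termination then says that after finitely many steps no further flip or divisorial contraction of the ambient MMP meets $S$.

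From this point on, the remaining MMP modifies only the open locus $X\setminus S$. To establish termination here I would reduce to a klt MMP on a suitable open neighborhood and leverage the existence of the good minimal model over $U^0$: any infinite sequence of flips supported over $U\setminus U^0$ and disjoint from $S$ can be ruled out by comparing discrepancies (or sections of $m(K_X+\Delta)$) against the already constructed good model on $X^0$. Once termination is achieved one has a log minimal model $(X',\Delta')/U$ with $K_{X'}+\Delta'$ nef over $U$; to promote it to a good minimal model I would prove semi-ampleness of $K_{X'}+\Delta'$ by combining semi-ampleness on the birational transform of $X^0$ (coming from the assumption) with semi-ampleness along the non-klt strata (coming from the inductively-constructed good minimal models of the $(S_i,\Delta_{S_i})$), glued via a relative base-point free theorem in the style of Kollár's treatment of dlt pairs.

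The main obstacle will be twofold: first, terminating the remaining MMP steps away from $S$ once special termination kicks in, since the classical klt termination results require a globally klt pair rather than a dlt one; and second, gluing the semi-ampleness statements on $U^0$ and along $S$ into semi-ampleness on all of $X'$ over $U$. The strata hypothesis is exactly the bridge that makes both gluings possible, and I expect the bulk of the technical work to go into making this bridge precise through careful extension of sections from $X^0$ and from neighborhoods of the non-klt strata.
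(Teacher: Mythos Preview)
Your plan contains a genuine gap at precisely the point you flag as ``the main obstacle'': termination of the MMP steps that are disjoint from $S$ and lie over $U\setminus U^0$. Once special termination ensures no further step meets $\lfloor\Delta\rfloor$, you are left with a klt pair on $X\setminus S$, but you have no control over $K_X+\Delta$ there: it need not be big over $U$, so BCHM does not apply, and the good model over $U^0$ places no constraint whatsoever on flips occurring entirely over $U\setminus U^0$. The sentence ``any infinite sequence of flips supported over $U\setminus U^0$ and disjoint from $S$ can be ruled out by comparing discrepancies (or sections of $m(K_X+\Delta)$) against the already constructed good model on $X^0$'' is not an argument---sections and discrepancies over $U^0$ simply do not see these flips.

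The paper avoids this problem by never running the MMP directly on $X$. Instead it passes, via the canonical bundle formula for the Iitaka fibration, to a base $(Y,B+J)$ on which $K_Y+B+J$ is \emph{big} over $U$. Bigness yields a decomposition $K_Y+B+J\sim_{\mathbb Q,U} C+\Sigma+A$ with $A$ ample and $\mathrm{Supp}(\Sigma)\subset\lfloor B\rfloor$; consequently every flipping curve of the $(K_{Y'}+B'+J')$-MMP with scaling satisfies $\Sigma'\cdot\Gamma<0$ and hence meets $\lfloor B'\rfloor$. Thus on the base there are \emph{no} flips disjoint from the boundary, and termination genuinely reduces to a special-termination argument handled by induction on dimension (this is the content of Proposition~\ref{p-term} and Lemma~\ref{l-ismc}). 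The MMP on $Y$ is then lifted back to a model $Z$ of $X$ via Corollary~\ref{c-mmp}. Your semi-ampleness gluing step, by contrast, is essentially what the paper does (Theorem~\ref{abundance} together with Koll\'ar's gluing theory in \S\ref{s-4}), so that part of the outline is on target.
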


There are several interesting consequences of this result.  

We begin by establishing the existence of log canonical closures of non-proper log canonical pairs.
\begin{corollary}[Existence of log canonical closure]\label{c-lcc}
Let $U^0$ be an open subset of a normal quasi-projective variety $U$, $f^0:X^0\to U^0$ a projective morphism, and $(X^0, \Delta^0)$ a log canonical pair. 
Then there exists a projective morphism $f:X\to U$ and a log canonical pair $(X,\Delta)$ such that  $X^0= X\times _ U U^0$ is an open subset and $\Delta^0=\Delta|_{X^0}$. 
\end{corollary}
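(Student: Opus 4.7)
\emph{Plan.} The strategy is to reduce Corollary \ref{c-lcc} to Theorem \ref{t-lccl} by producing, via crude compactification and a log resolution, an auxiliary dlt pair to which the main theorem applies; the resulting good minimal model over $U$ will be the desired log canonical compactification. First, using the projectivity of $f^0$, I extend $X^0\to U^0$ to a projective morphism $\bar f:\bar X\to U$ with $X^0\subset\bar X$ open (for instance, by embedding $X^0$ into a projective space over $U$ and taking the closure, applying Chow's lemma if necessary). Let $\bar\Delta$ denote the Zariski closure of $\Delta^0$ in $\bar X$ and set $\bar E:=(\bar X\setminus X^0)_{\red}$.

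Next, I take a log resolution $g:Y\to\bar X$ of the pair $(\bar X,\bar\Delta+\bar E)$ and set $Y^0:=g^{-1}(X^0)$, $g^0:=g|_{Y^0}$, so that $g^0:Y^0\to X^0$ is a log resolution of $(X^0,\Delta^0)$. I define the boundary
\[
\Delta_Y \;:=\; g_*^{-1}\bar\Delta \;+\; \sum_{g(F_j)\cap X^0\neq\emptyset} F_j \;+\; (1-\epsilon)\sum_{g(F_k)\subset\bar E} F_k,
\]
where $F_j,F_k$ range over the $g$-exceptional divisors together with the strict transforms of components of $\bar E$, and $\epsilon>0$ is small so that the second sum contributes no component to $\lfloor\Delta_Y\rfloor$. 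Then $(Y,\Delta_Y)$ is log smooth (hence dlt) and projective over $U$. By construction every component of $\lfloor\Delta_Y\rfloor$ meets $Y^0$, and by refining the choice of $g$ (further blowups to separate strata supported in $g^{-1}(\bar E)$ from those that meet $g^{-1}(X^0)$) one can arrange that every stratum of $\lfloor\Delta_Y\rfloor$ meets $Y^0$. Since $(X^0,\Delta^0)$ is lc, the standard identity
\[
K_{Y^0}+\Delta_Y|_{Y^0} \;=\; (g^0)^*(K_{X^0}+\Delta^0)+G^0
\]
with $G^0\ge 0$ effective and $g^0$-exceptional exhibits $(X^0,\Delta^0)$ as a good minimal model of $(Y^0,\Delta_Y|_{Y^0})$ over $U^0$.

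Both hypotheses of Theorem \ref{t-lccl} now hold for $(Y,\Delta_Y)$ over $U$ with the open $U^0\subset U$, so the main theorem yields a good minimal model $(X,\Delta)$ of $(Y,\Delta_Y)$ over $U$. This pair is log canonical, $X$ is projective over $U$, and by the uniqueness of log canonical models on the open locus over $U^0$, the restriction $(X,\Delta)\times_U U^0$ agrees with $(X^0,\Delta^0)$, giving the required compactification. The main obstacle is arranging the strata condition together with the identification of $(X^0,\Delta^0)$ as a good minimal model over $U^0$: one must choose the log resolution $g$ so that every intersection of components of $\lfloor\Delta_Y\rfloor$ meets $Y^0$ (which requires separating the strata supported in $g^{-1}(\bar E)$ by further blowups), and the coefficient perturbation $1-\epsilon$ on boundary components must be compatible with the dlt structure and with the identification of $(X^0,\Delta^0)$ as the good minimal model of $(Y^0,\Delta_Y|_{Y^0})$. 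Carrying out this combinatorial bookkeeping carefully is the main technical point of the argument.
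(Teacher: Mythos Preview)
Your reduction has a genuine gap at the point where you assert that $(X^0,\Delta^0)$ is a good minimal model of $(Y^0,\Delta_Y|_{Y^0})$ over $U^0$. The identity
\[
K_{Y^0}+\Delta_Y|_{Y^0}=(g^0)^*(K_{X^0}+\Delta^0)+G^0,\qquad G^0\ge 0\ \text{and $g^0$-exceptional},
\]
only says that $(X^0,\Delta^0)$ is a weak log canonical model of $(Y^0,\Delta_Y|_{Y^0})$ \emph{over $X^0$}. It does not say anything over $U^0$: for $(X^0,\Delta^0)$ to be a good minimal model over $U^0$ you would need $K_{X^0}+\Delta^0$ to be nef and semi-ample over $U^0$, and no such hypothesis is made. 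In general $K_{X^0}+\Delta^0$ is an arbitrary log canonical pair over $U^0$, so the existence of a good minimal model of $(Y^0,\Delta_Y|_{Y^0})$ over $U^0$ is exactly the open problem you are trying to bypass. Consequently the hypotheses of Theorem~\ref{t-lccl} are not verified for $(Y,\Delta_Y)\to U$, and even if they were, your identification of the restriction with $(X^0,\Delta^0)$ via ``uniqueness of log canonical models'' would fail, since $(X^0,\Delta^0)$ is not a log canonical model over $U^0$.

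The paper's proof fixes this by changing the base: one applies Theorem~\ref{t-lccl} to the morphism $\bar X\to X^c$ (a projective compactification of $X^0$ over $U$), with open subset $X^0\subset X^c$, rather than to $\bar X\to U$. Over $X^0$ the map $\bar X^0\to X^0$ is birational, and the same identity above shows that $(\bar X^0,\bar\Delta^0)$ has $X^0$ as its log canonical model over $X^0$; in particular a good minimal model over $X^0$ exists trivially. Taking $(X,\Delta)$ to be the relative log canonical model of $(\bar X,\bar\Delta)$ over $X^c$ then gives a log canonical pair projective over $U$ whose restriction over $X^0$ is $(X^0,\Delta^0)$. Your combinatorial remarks about arranging the strata condition via further blow-ups are fine and are needed in the paper's argument as well; the essential missing idea is to work relative to $X^c$ rather than relative to $U$.
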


Our next application establishes the existence of compactifications of log canonical morphisms. 
\begin{defn}[{\cite[7.1]{KM98}}]\label{d-lc} Let $X$ be a normal variety, $f : X \to U$ a dominant morphism to a smooth curve $U$, 
$\Delta$ an effective $\mathbb{Q}$-divisor such that $K_X +\Delta$ is $\mathbb{Q}$-Cartier. We say that $f$ is a {\it log canonical morphism}, or an {\it lc morphism}, if $(X, \Delta + X_p)$ is lc for all closed points $p \in U$, where $X_p$ is the fiber over $p$.
\end{defn}

As a corollary to \eqref{t-lccl}, we give an affirmative answer to the following conjecture due to Koll\'ar-Kov\'acs.
\begin{corollary}[{\cite[Conjecture 7.16]{KK10}}]\label{c-lcm}Let $U$ be a smooth curve.
Let $f^0 : X^0 \to U$ be an affine finite type lc morphism. Then there exists a finite dominating base change morphism $\theta : \tilde{U} \to U$ and a projective lc morphism $ f: X \to \tilde{U}$ such that $X^0\times_U \tilde {U}\subset X$  and $f|_{X^0\times_U \tilde {U}} =f^0\times_U \theta$. 
\end{corollary}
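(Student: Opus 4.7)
The plan is to use Theorem \ref{t-lccl} combined with semistable reduction. Let $\bar U \supset U$ be a smooth projective compactification of $U$ and $P = \bar U \setminus U$. Since $f^0$ is affine of finite type, $X^0$ embeds as an open subset of a projective morphism $\bar X_1 \to \bar U$ by a relative projective closure, and $\Delta^0$ extends to $\bar \Delta_1$ on $\bar X_1$ by Zariski closure; the pair $(\bar X_1, \bar \Delta_1)$ is lc only on $X^0$. Choose a log resolution $\pi : Y \to \bar X_1$ of $\bar \Delta_1 + (\bar X_1 \setminus X^0)_{\red} + \bar X_{1, P}$ and set $\Delta_Y = \pi^{-1}_* \bar \Delta_1 + E$ with $E$ the reduced exceptional divisor, so that $(Y, \Delta_Y + Y_P)$ is log smooth.

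Apply Kempf--Knudsen--Mumford--Saint-Donat semistable reduction to $Y \to \bar U$: after a finite surjection $\theta : \tilde U \to \bar U$ from a smooth curve and a further birational modification, obtain a projective morphism $Z \to \tilde U$ whose fibers over $\tilde P := \theta^{-1}(P)$ are reduced simple normal crossing divisors, and a log smooth (hence dlt) pair $(Z, \Delta_Z + E_Z + Z_{\tilde P})$, where $\Delta_Z$ and $E_Z$ are the strict transforms of $\Delta_Y$ and $E$. Set $\tilde U^0 := \theta^{-1}(U)$; over $\tilde U^0$ the pair $(Z^0, \Delta_{Z^0} + E_{Z^0})$ is a log resolution of the base change of $(X^0, \Delta^0)$ with reduced exceptional divisor, and crepantly contracting $E_{Z^0}$ recovers that base change, which is therefore a good minimal model over $\tilde U^0$.

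Apply Theorem \ref{t-lccl} to $(Z, \Delta_Z + E_Z + Z_{\tilde P})$ over $\tilde U$ with distinguished open subset $\tilde U^0$. The dlt and good-minimal-model hypotheses are immediate from the preceding paragraph. The strata hypothesis requires more care, however, since components of $Z_{\tilde P}$ and vertical components of $E_Z$ introduced by semistable reduction produce strata of the boundary contained entirely in fibers over $\tilde P$ and hence disjoint from $Z^0 := Z \times_{\tilde U} \tilde U^0$. The way around this is to exploit the relative numerical triviality $Z_{\tilde P} \equiv_{\tilde U} 0$ and replace the full boundary by its horizontal part $\Delta_Z + E_Z^h$ (discarding all vertical components); for this sub-pair the strata hypothesis does hold (the strata project dominantly onto $\tilde U$ and so meet $Z^0$), while the relative MMP agrees with that of the full pair, so the vertical components can be transported along the MMP to yield, at the end, a dlt pair $(X, \Delta_X + X_{\tilde P})$ on the good minimal model.

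The output $(X, \Delta_X + X_{\tilde P}) \to \tilde U$ is projective and lc. Since $\pi$ and the semistable modification are isomorphisms away from $(\bar X_1 \setminus X^0) \cup \bar X_{1, P}$, and the MMP contracts $E$ crepantly over $X^0$, the open subset $X^0 \times_U \tilde U \subset X$ is preserved with $\Delta_X$ restricting to the pullback of $\Delta^0$ and $f|_{X^0 \times_U \tilde U} = f^0 \times_U \theta$. The lc morphism property then follows: at $p \in \tilde U^0$ by base change of the hypothesis on $f^0$; at $p \in \tilde P$, because $(X, \Delta_X + X_{\tilde P})$ being dlt forces $X_{\tilde p}$ to be reduced, so $(X, \Delta_X + X_p)$ is lc. The principal obstacle is the strata hypothesis of Theorem \ref{t-lccl} in the presence of vertical boundary components; the numerical triviality $Z_{\tilde P} \equiv_{\tilde U} 0$ and the careful bookkeeping of the vertical boundary along the MMP are the main technical tools for handling it.
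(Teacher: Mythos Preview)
Your proposal has a genuine gap at the step where you invoke Theorem~\ref{t-lccl}. You apply it to $(Z,\Delta_Z+E_Z+Z_{\tilde P})\to\tilde U$ with open set $\tilde U^0=\theta^{-1}(U)$, and assert that the good-minimal-model hypothesis holds over $\tilde U^0$ because ``crepantly contracting $E_{Z^0}$ recovers that base change, which is therefore a good minimal model over $\tilde U^0$.'' This is incorrect on two counts. First, $Z^0$ is projective over $\tilde U^0$ while $X^0\times_U\tilde U^0$ is only affine over $\tilde U^0$; so $Z^0$ is not birational to $X^0\times_U\tilde U^0$ but rather to the base change of the projective closure $\bar X_1$, and there is no morphism $Z^0\to X^0\times_U\tilde U^0$ contracting $E_{Z^0}$. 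Second, even if such a contraction existed, the statement of the corollary places no positivity assumption on $K_{X^0}+\Delta^0$: it is not assumed nef, semi-ample, or even pseudo-effective over $U$, so there is no reason for $(Z^0,\Delta_{Z^0}+E_{Z^0})$ to have a good minimal model over $\tilde U^0$. Your workaround with the horizontal boundary addresses the strata hypothesis, but does nothing for this.

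The paper avoids this problem entirely by applying Theorem~\ref{t-lccl} not over the curve $\tilde U$ but over the compactified total space $\tilde X^c$ (the normalization of the main component of $X^c\times_U\tilde U$), with open set $\tilde X^0=X^0\times_U\tilde U$. The morphism $\bar X\to\tilde X^c$ is birational, so over $\tilde X^0$ the pair $(\bar X^0,\bar\Delta^0)$ trivially has a good minimal model, namely $\tilde X^0$ itself: one has $K_{\bar X^0}+\bar\Delta^0=\tilde\pi^*K_{\tilde X^0}+F^0$ with $F^0\ge0$ exceptional, so the relative log canonical model over $\tilde X^0$ is $\tilde X^0$, independent of any positivity of $K_{X^0}+\Delta^0$. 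Semistability of $(\bar X,\bar\Delta)\to\tilde U$ ensures the strata of $\lfloor\bar\Delta\rfloor$ are smooth over $\tilde U$ and hence meet $\bar X^0$. (Note also that the paper does not compactify $U$; the passage $U\hookrightarrow\bar U$ in your argument is unnecessary and is what introduces the vertical boundary $Z_{\tilde P}$ you then struggle to remove.)
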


Next we prove the following statement which implies the properness of the moduli functor of stable schemes (cf. \S \ref{s-func}).  

\begin{corollary}\label{c-pm}
Let $f^0:X^0\to U^0$ be a projective morphism, $(X^0,\Delta^0)$ a log canonical pair, $U$ the germ of a smooth curve, $p\in U$ a closed point and $U^0=U\setminus \{ p\}$. If $K_{X^0}+\Delta^0$ is $f^0$-ample, 
then there is a finite dominating base change $\theta:\tilde{U}\to U$, a log canonical pair $(X,\Delta)$ 
and a projective lc morphism $(X,\Delta)\to \tilde{U}$ such that $K_X+\Delta$ is ample over $\tilde{U}$ and the restriction of $(X,\Delta)$ to the pre-image  $\theta^{-1}(U^0)$ is isomorphic to $ (X^0,\Delta^0)\times_{U}\tilde{U} $. 
\end{corollary}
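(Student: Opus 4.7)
The strategy is to first compactify via Corollary \ref{c-lcc}, then extract the relative ample model using Theorem \ref{t-lccl}. Applying Corollary \ref{c-lcc} to $(X^0,\Delta^0)\to U^0$ yields a projective log canonical pair $(X_1,\Delta_1)\to U$ extending the given data; since $K_{X^0}+\Delta^0$ is $f^0$-ample, $K_{X_1}+\Delta_1$ is big over $U$.

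After a suitable finite base change $\theta:\tilde{U}\to U$, I would pass to a $\mathbb{Q}$-factorial dlt modification $\pi:(Z,\Delta_Z)\to (X_1,\Delta_1)\times_U\tilde{U}$, satisfying the crepant identity $K_Z+\Delta_Z=\pi^*(K_{X_1}+\Delta_1)$. The aim is to apply Theorem \ref{t-lccl} to $(Z,\Delta_Z)\to\tilde{U}$ with the open subset $\tilde{U}^0:=\theta^{-1}(U^0)$. Over $\tilde{U}^0$, the restriction $(K_Z+\Delta_Z)|_{\tilde{U}^0}$ is the crepant pullback of the ample divisor $K+\Delta$ on $(X^0,\Delta^0)\times_U\tilde{U}$, hence semi-ample; so $(Z,\Delta_Z)|_{\tilde{U}^0}$ is already its own good minimal model over $\tilde{U}^0$.

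Granting the stratum hypothesis—that every stratum of $\lfloor \Delta_Z\rfloor$ meets $Z^0:=Z\times_{\tilde{U}}\tilde{U}^0$—Theorem \ref{t-lccl} produces a good minimal model of $(Z,\Delta_Z)$ over $\tilde{U}$. Taking its relative ample model yields a projective pair $(X,\Delta)\to\tilde{U}$ with $K_X+\Delta$ ample over $\tilde{U}$; over $\tilde{U}^0$ the ample model coincides with $(X^0,\Delta^0)\times_U\tilde{U}$, since that pair already has ample log canonical divisor. Then $(X,\Delta)$ is lc (canonical model of a dlt pair), and the lc-morphism property follows by observing that $Z_p\equiv 0$ over $\tilde{U}$, so running the $(K_Z+\Delta_Z)$-MMP coincides with running the $(K_Z+\Delta_Z+Z_p)$-MMP; consequently $(X,\Delta+X_p)$ is the ample model of the dlt pair $(Z,\Delta_Z+Z_p)$, which is lc.

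The main obstacle is arranging the stratum condition on $(Z,\Delta_Z)$. Strict transforms of components of $\lfloor\Delta^0\rfloor$ are horizontal and therefore meet $Z^0$, but any $\pi$-exceptional divisors corresponding to lc places of $(X_1,\Delta_1)\times_U\tilde{U}$ whose centers lie entirely in the special fiber do not. The finite base change $\theta$ is the key tool for eliminating such vertical lc places—analogously to semistable reduction—so that the dlt modification $\pi$ introduces only exceptional divisors whose centers dominate some stratum intersecting $\tilde{U}^0$.
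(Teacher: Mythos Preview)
Your overall architecture is right, but the proof as written has a real gap at exactly the point you flag as ``the main obstacle.'' You propose to (i) apply Corollary~\ref{c-lcc} to get an lc compactification $(X_1,\Delta_1)$, (ii) base change, (iii) take a dlt modification $(Z,\Delta_Z)$, and then (iv) apply Theorem~\ref{t-lccl}. But step~(iii) will in general extract, with coefficient~$1$, every lc place of $(X_1,\Delta_1)\times_U\tilde U$ centered in the special fiber; those produce strata of $\lfloor\Delta_Z\rfloor$ disjoint from $Z^0$, and your only remedy is the phrase ``analogously to semistable reduction.'' That is not an argument: a finite base change applied to the lc pair $(X_1,\Delta_1)$ does not by itself kill vertical lc places, and you have not specified any procedure that does. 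The same gap reappears in your lc-morphism argument: for $(Z,\Delta_Z+Z_p)$ to be dlt you need $Z_p$ to be reduced and to meet $\Delta_Z$ in an SNC configuration, neither of which is provided by a dlt modification of an arbitrary lc compactification.

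The paper avoids this entirely by reversing the order of operations. Instead of first producing an lc compactification and then trying to repair it, one compactifies naively to $X^c\to U$, and then applies semi-stable reduction (\cite[7.17]{KM98}, \cite{KKMS73}) \emph{before} doing anything else: after a finite base change $\tilde U\to U$ there is a log resolution $\bar X\to\tilde X^c$ such that $(\bar X,\bar\Delta)\to\tilde U$ is semi-stable in the sense of Definition~\ref{d-ss}, where $\bar\Delta$ is the closure of the boundary coming from $(X^0,\Delta^0)\times_U\tilde U$. The point is that now $(\bar X,\bar\Delta)$ is log smooth, every component of $\bar\Delta$ is horizontal (it is the closure of something over $\tilde U^0$), so \emph{all} strata of $\lfloor\bar\Delta\rfloor$ meet $\bar X^0$ automatically; and $(\bar X,\bar\Delta+\bar X_p)$ is dlt for every $p$, which is exactly what gives the lc-morphism property of the output. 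One then applies Theorem~\ref{t-lccl} once, over $\tilde U$, and takes the log canonical model. Your detour through Corollary~\ref{c-lcc} is unnecessary and is what creates the vertical-stratum problem you then cannot resolve.
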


The techniques developed for proving \eqref{t-lccl} can also be used to verify the following statement, which is conjectured by J. Koll\'ar as a tool to study the geometry of $\epsilon$-lc centers.

\begin{theorem}[{\cite[Conjecture 4.11]{Kollar11}}]\label{c-tri}
Let $f : X \to U$ be a projective morphism between normal varieties, $\Delta'$ and $\Delta''$ effective $\mathbb{Q}$-divisors on $X$ such that $(X, \Delta' + \Delta'')$ is a $\mathbb{ Q}$-factorial lc pair, $(X,\Delta'')$ is dlt and $K_X + \Delta' + \Delta'' \sim_{\mathbb{Q},U} 0$. Then the $(K_X+\Delta'')$-MMP with scaling over $U$ terminates with either a Mori fibration or a $\mathbb{Q}$-factorial good minimal model. \end{theorem}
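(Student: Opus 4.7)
The plan is to produce a $\Q$-factorial good minimal model (or a Mori fibration) for $(X,\Delta'')$ over $U$ by invoking Theorem~\ref{t-lccl}, and then to deduce termination of the $(K_X+\Delta'')$-MMP with scaling from Birkar's theorem, which guarantees that once a good minimal model exists any MMP with scaling by an ample divisor terminates.

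The central step is the choice of the open subset $U^0\subset U$ required by Theorem~\ref{t-lccl}. Assume first that no component of $\Delta'$ dominates $U$, and set $U^0:=U\setminus f(\supp\Delta')$ and $X^0:=f^{-1}(U^0)$. Then $\Delta'|_{X^0}=0$, and the hypothesis $K_X+\Delta'+\Delta''\sim_{\Q,U}0$ gives
\[
K_{X^0}+\Delta''|_{X^0}=(K_X+\Delta''+\Delta')|_{X^0}\sim_{\Q,U^0}0,
\]
so $(X^0,\Delta''|_{X^0})$ is tautologically a good minimal model over $U^0$. If every stratum of $\lfloor\Delta''\rfloor$ meets $X^0$, Theorem~\ref{t-lccl} produces a good minimal model of $(X,\Delta'')$ over $U$, which can be taken $\Q$-factorial via a dlt modification. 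When some stratum $T$ of $\lfloor\Delta''\rfloor$ is contained in $f^{-1}(f(\supp\Delta'))$ and thus disjoint from $X^0$, I would argue by induction on $\dim T$: by dlt adjunction each such stratum carries an induced dlt pair still satisfying the hypotheses of the theorem (with analogues of $\Delta'|_T$ and $\Delta''|_T$ summing to something $\Q$-linearly trivial over the image), and running a preliminary MMP on the strata of $\lfloor\Delta''\rfloor$ in the spirit of special termination lets one either contract the obstructing lc centres or reduce their images, eventually arranging that every remaining stratum meets $X^0$.

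If instead some component of $\Delta'$ dominates $U$, then on the generic fibre $X_\eta$ the divisor $K_X+\Delta''$ is $\Q$-linearly equivalent to the nonzero anti-effective divisor $-\Delta'|_{X_\eta}$, so $K_X+\Delta''$ is not pseudo-effective over $U$ and the MMP must terminate with a Mori fibration. I would establish termination by applying known termination results for dlt pairs equipped with an effective complement to $(X_\eta,\Delta''|_{X_\eta})$ to obtain a Mori fibration on the generic fibre, spread this structure out over a dense open $V\subset U$, and then finish by noetherian induction on $\dim U$, treating each irreducible component of $U\setminus V$ via the vertical case already handled. The principal obstacle I foresee is the elimination of obstructing lc strata in the vertical case, which demands a careful special-termination-style induction on the stratification of $\lfloor\Delta''\rfloor$; a secondary difficulty is the book-keeping needed in the horizontal case to propagate the Mori fibration from the generic fibre to all of $U$ while preserving $\Q$-factorial dltness.
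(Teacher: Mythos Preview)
Your horizontal case is overcomplicated: when a component of $\Delta'$ dominates $U$, the divisor $K_X+\Delta''\sim_{\Q,U}-\Delta'$ is not pseudo-effective over $U$, and Theorem~\ref{t-bchm}(3) (i.e.\ \cite{BCHM10}) directly gives that the $(K_X+\Delta'')$-MMP with scaling of an ample divisor terminates in a Mori fibration. No spreading out or noetherian induction on $U$ is needed.

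The real gap is in the vertical case. Your choice $U^0=U\setminus f(\supp\Delta')$ is correct, and you correctly identify that the obstruction to invoking Theorem~\ref{t-lccl} is that a stratum of $\lfloor\Delta''\rfloor$ may fail to meet $X^0$: although $(X,\Delta'+\Delta'')$ being lc forces every such stratum to lie outside $\supp\Delta'$, nothing prevents its image in $U$ from lying inside $f(\supp\Delta')$. But your proposed fix---``running a preliminary MMP on the strata \ldots\ in the spirit of special termination lets one either contract the obstructing lc centres or reduce their images''---does not work. Running an MMP on a stratum $G$ does not alter $X$ or its non-klt centres; running an MMP on $X$ itself is exactly what you are trying to show terminates; and special termination asserts that flips eventually become isomorphisms near $\lfloor\Delta''\rfloor$, it does not move strata into $X^0$.

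The paper does not try to verify the strata hypothesis of Theorem~\ref{t-lccl} directly on $X$. Instead it reruns the machinery of Section~5: pass to an Iitaka-type fibration $Z\to Y$ via the canonical bundle formula, run a $(K_{Y'}+B'+J')$-MMP with scaling on the base, and prove termination of that MMP by induction on \emph{this} theorem in dimension $n-1$ (not on Theorem~\ref{t-lccl}). The crucial new ingredient is Lemma~\ref{l-out}: on a good minimal model $\bar G$ of a stratum one has $K_{\bar G}+\Delta''_{\bar G}\sim_{\Q,U}-\Delta'_{\bar G}$ \emph{nef} over its canonical model $\bar D$, which forces $\supp(\Delta'_{\bar G})$ to equal the preimage of its image, i.e.\ to be a union of fibres; since $(\bar G,\Delta'_{\bar G}+\Delta''_{\bar G})$ is lc, no non-klt centre of $(\bar G,\Delta''_{\bar G})$ lies in $\supp(\Delta'_{\bar G})$, and hence every non-klt centre meets the good open set. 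Only then is Theorem~\ref{t-lccl} invoked, and the same nefness argument is used again on the terminal model $Z'_N$. This mechanism---nefness of $-\Delta'$ forces $\supp\Delta'$ to be fibral, hence strata meet $U^0$---is the missing idea in your proposal.
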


\begin{rmk}Note that several of the above results were proven independently by C. Birkar (cf. \cite{Birkar11}) under some mild additional assumptions. In particular, compare \cite[1.3, 1.9]{Birkar11}  to \eqref{t-lccl} and \eqref{c-tri}. Moreover, as explained in \cite[1.6]{Birkar11}, \eqref{c-tri} immediately implies the existence of log canonical flips cf. \eqref{c-flips} below.\end{rmk} 
\begin{corollary}\label{c-flips} Let $f:X\to Z$ be a flipping contraction for  a log canonical pair $(X,\Delta )$. Then the flip
of $f$ exists.
\end{corollary}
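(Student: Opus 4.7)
The plan is to reduce the construction of the flip to Theorem \ref{c-tri}. First, take a $\mathbb{Q}$-factorial dlt modification $g : Y \to X$ of the lc pair $(X,\Delta)$; this exists by standard MMP arguments and produces a $\mathbb{Q}$-factorial dlt pair $(Y,\Delta_Y)$ with $\Delta_Y = g^{-1}_*\Delta + E$ (here $E$ denotes the reduced $g$-exceptional divisor) satisfying $K_Y + \Delta_Y = g^*(K_X+\Delta)$. Set $h := f\circ g : Y \to Z$; since both $f$ and $g$ are birational, so is $h$.

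Since $-(K_X+\Delta)$ is $f$-ample, for $m$ sufficiently divisible and $M$ a sufficiently ample divisor on $Z$, the divisor $-m(K_X+\Delta)+f^*M$ is very ample on $X$. By Bertini, a general member $A'$ of this linear system is smooth, and $A := \tfrac{1}{m}A' \geq 0$ satisfies $K_X+\Delta+A \sim_{\mathbb{Q},Z} 0$ with $(X,\Delta+A)$ lc. Pulling back, $A_Y := g^*A$ gives a $\mathbb{Q}$-factorial lc pair $(Y,\Delta_Y+A_Y)$ (since $K_Y+\Delta_Y+A_Y = g^*(K_X+\Delta+A)$, the crepant pullback preserves discrepancies), with $K_Y+\Delta_Y+A_Y \sim_{\mathbb{Q},Z} 0$. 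Applying Theorem \ref{c-tri} with $U=Z$, $\Delta'' = \Delta_Y$, and $\Delta' = A_Y$, the $(K_Y+\Delta_Y)$-MMP with scaling over $Z$ terminates in either a Mori fibration or a $\mathbb{Q}$-factorial good minimal model.

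A Mori fibration endpoint is ruled out by birationality: at every stage of the MMP the output $Y_i$ still maps birationally to $Z$, so it cannot admit a genuine fibration over a variety $W/Z$ (that would force $\dim W < \dim Y_i = \dim Z$ yet $W \to Z$ dominant, a contradiction). Hence the MMP terminates with a $\mathbb{Q}$-factorial good minimal model $(Y', \Delta_{Y'})$ over $Z$ whose $K_{Y'}+\Delta_{Y'}$ is semiample over $Z$. Let $X^+ \to Z$ be the induced ample model; then $K_{X^+} + \Delta^+$ is $Z$-ample. Because the relative log canonical algebra $\bigoplus_m f_*\sO_X(\lfloor m(K_X+\Delta)\rfloor)$ is invariant under the small modification $g$ and under the birational MMP $Y \dashrightarrow Y'$, we identify $X^+$ with $\mathrm{Proj}_Z$ of this algebra, so that $X \dashrightarrow X^+$ is small over $Z$, and $X^+$ is the flip of $f$. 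The main step is the appeal to Theorem \ref{c-tri}; the remaining checks — existence of the dlt modification, the Bertini choice of $A$, excluding the Mori fiber endpoint, and identifying $X^+$ with $\mathrm{Proj}$ of the canonical algebra — are all standard.
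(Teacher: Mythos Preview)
Your argument is correct and is essentially the standard derivation of log canonical flips from Theorem~\ref{c-tri}; the paper itself does not spell this out but simply remarks that, as explained in \cite[1.6]{Birkar11}, Theorem~\ref{c-tri} immediately implies \eqref{c-flips}. Your dlt modification, Bertini choice of $A$, and identification of $X^+$ with ${\rm Proj}_Z\, R(X/Z,K_X+\Delta)$ are exactly the steps one fills in when unpacking that citation.
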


We will now briefly sketch the idea of the proof of \eqref{t-lccl} and give an outline of this paper. 
The proof is by induction on the dimension and is divided into three main steps which are given in Sections 3-5. 
The goal is to show that the dlt pair $(X,\Delta )$ admits a good minimal model over $U$. Because of the inductive structure of the proof, it is necessary to establish an analogous  statement for the non-klt locus $S=\lfloor \Delta \rfloor$ or more precisely for the sdlt pair $(S,\Delta _S)$ where $K_S+\Delta _S
:=(K_X+\Delta )|_S$ is defined by adjunction.
Note that $S$ is not normal, however we may assume by induction on the dimension, that the required statement holds on each irreducible component $S_i$ of $S$. We then use Koll\'ar's gluing theory to deduce the result on $S$. This step is achieved in Section 3.

In Section 4, we observe that the result in Section 3 and a result of Fujino (cf. \cite{Fujino05}), which generalizes the main theorem of Kawamata in \cite{Kawamata85}, immediately imply that any minimal model of $(X,\Delta)$ is indeed a good model.

Therefore, the proof of \eqref{t-lccl} is reduced to proving 
the existence of a minimal model for the dlt pair $(X,\Delta )$.  This is the most technical step. The difficulty is two-fold. First, since the pair is not 
of log general type, we must work with the Iitaka fibration and apply 
Kawamata's canonical bundle formula. This introduces several technical difficulties. 
Second, as already apparent in \cite{BCHM10}, it is hard to directly show the termination of flips. 
Instead we follow some ideas developed in \cite[Section 5]{BCHM10}: we first see that it suffices to find a {\it neutral model} (see \cite[5.3]{BCHM10} for the definition) which contracts the `right' components. 
Then to construct such a neutral model, we use Shokurov's idea which reduces the problem to a special termination question (cf. \cite{Shokurov92}), so that we can apply induction on the dimension.
 
In Section 6, we prove the above mentioned corollaries to \eqref{t-lccl}, and in the last section, we discuss (following ideas due to V. Alexeev, J. Koll\'ar, N. Shepherd-Barron and others) the relationship between the results of this paper and the question of the existence of a natural compactification of the moduli space of canonically polarized varieties. 

\section{Preliminaries}
We will follow the terminology from \cite{KM98}.  We will also need the definition of certain singularities of semi-normal pairs. 
Let $X$ be a semi-normal variety which satisfies Serre's condition $S_2$ and $\Delta$ be a $\Q$-divisor on $X$, such that $K_X+\Delta$ is $\mathbb{Q}$-Cartier. 
Let $n:X^n\to X$ be the normalization of $X$ and write $n^*(K_X+\Delta)=K_{X^n}+\Delta^n +\Gamma$, where $\Gamma$ is the reduced double locus. 
We say that $(X,\Delta)$ is {\it semi-log canonical} or {\it slc} if $(X^n,\Delta^n+\Gamma)$ is log canonical and $(X,\Delta)$ is {\it divisorial semi-log-terminal} or {\it dslt} if $(X^n,\Delta^n+\Gamma)$ is dlt. 
Note that if $(X,\Delta)$ is dlt and $B$ is a union of components of $\lfloor \Delta \rfloor$, then $(B,{\rm Diff}^*_{B}(\Delta))$ is $dslt$, where $K_B+{\rm Diff}^*_{B}(\Delta)=(K_X+\Delta)|_B$.

Given a $dlt$ pair $(X,\Delta)$ with non-klt locus $S=\lfloor \Delta \rfloor$, write $S=\sum^{m}_{i=1} S_i$ where each $S_i$ is a prime divisor. We say that $Z\subset  S$ is
{\it a stratum of $S$} if $Z$ is a non-empty irreducible component of $S_I=\cap _{i\in I}S_i$, for some subset $I\subset \{1,2,\ldots ,m\}$. 

Let $g:X\to Y$ be a proper morphism between two normal varieties. We say that $g$ is an {\it algebraic fibration} if $g_*\mathcal{O}_X=\mathcal{O}_Y$. If $f:X\to U$ is an  algebraic fibration,  and $\mathcal F$ is a coherent sheaf on $X$, then $\mathcal F$ is {\it generated over} $U$ (or $f$-{\it generated}) if the homomorphism $f^*f_*\mathcal F\to \mathcal F$ is surjective. 
Notice that if $g:U\to V$ is another algebraic fibration, then $\mathcal F$ is generated over $V$ implies that $\mathcal F$ is generated over $U$ (in fact since the composition $(g\circ f)^*(g\circ f)_*\mathcal F=f^*(g^*g_*(f_*\mathcal F))\to  f^*f_*\mathcal F\to \mathcal F$ is surjective, so is the homomorphism $ f^*f_*\mathcal F\to \mathcal F$).

Let $\phi:X\dasharrow Y$ be a proper birational contraction of normal quasi-projective varieties (so that in particular $\phi ^{-1}$ contracts no divisors).
If $D$ is a $\Q$-Cartier divisor on $X$ such that $D':=\phi _*D$ is $\Q$-Cartier then we say that $\phi$ is $D$-{\it non-positive} (resp. $D$-{\it negative}) if for a common resolution $p:W\to X$ and $q:W\to Y$, we have $p^*D=q^*D'+E$ where $E\geq 0$ and $p_*E$ is $\phi$-exceptional (respectively the support of $p_*E$  equals the set of $\phi$-exceptional divisors).
Suppose that $f:X\to U$ and $f_M:X_M\to U$ are projective morphisms, $\phi:X\dasharrow X_M$ is a birational contraction  and $(X,\Delta )$ and $(X_M,\Delta _M)$ 
are log canonical pairs, klt pairs,  or dlt pairs where $\Delta_M=\phi_*\Delta$. 
If $a(E;X,\Delta )>a(E;X_M,\Delta _M)$  for all $\phi$-exceptional divisors $E\subset X$, $X_M$ is $\Q$-factorial and $K_{X_M}+\Delta_M$ is nef  over $U$, then we say that 
$\phi:X\dasharrow X_M$ is  {\it a minimal model}. If instead $a(E;X,\Delta )\ge a(E;X_M,\Delta _M)$ for all divisors $E$  and $K_{X_M}+\Delta_M$ is nef, then we call $X_M$ a  {\it weak log canonical model} of  $K_X+\Delta$ over $U$.  

We say $K_{X_M}+\Delta_M$  is {\it semi-ample} over $U$ if there exists a morphism $\psi : X_M\to Z$ between normal varieties projective over $U$ such that  $K_{X_M}+\Delta_M\sim_{\mathbb{Q}}\psi ^* A$ for some $\Q$-divisor $A$ on $Z$ which is ample over $U$.
When $\psi_*\mathcal{O}_X=\mathcal{O}_Z$,  $K_{X_M}+\Delta_M$  is semi-ample over $U$ if and only if there exists an integer $m>0$ such that $\mathcal O_{X_M}(m(K_{X_M}+\Delta_M))$ is generated over $U$.
Note that in this case $$R(X_M/U,K_{X_M}+\Delta_M):=\bigoplus_{m\geq 0}f_*\mathcal O_{X_M}(m(K_{X_M}+\Delta_M))$$ is a finitely generated $\mathcal{O}_U$-algebra and  $Z={\rm Proj}R(X_M/U,K_{X_M}+\Delta_M)$. 
Recall that for any $\mathbb{Q}$-divisor $D$ on $X$,  the sheaf $f_*\mathcal{O}_X(D)$ is defined to be $f_*\mathcal{O}_X(\lfloor D \rfloor)$. 
A minimal model $\phi:X\dasharrow X_M$ is called {\it a good minimal model} (or {\it a good model} for short) if $K_{X_M}+\Delta_M$ is semi-ample. 
If $K_{X_M}+\Delta_M$ is semi-ample and big over $U$, then we let  
$X_{LC}={\rm Proj}R(X/U,K_X+\Delta)$ be the {\it log canonical model } of $(X,\Delta)$ over $U$. More generally, we say that a birational contraction $g : X \dasharrow Y$ over $U$ is a {\it semi-ample model} of a $\Q$-Cartier divisor $D$ over $U$ if $g$ is $D$-non-positive, $Y$ is normal and projective over $U$ and $H = g_*D$ is semi-ample over $U$. 

Let $X$ be a projective variety, $B$ a big $\mathbb{R}$-divisor on $X$ and $C$ a prime divisor on $X$. Then we have
$$\sigma_{C}(B) := \inf\{{\rm mult}_C(B')|B' \sim_{\mathbb{R}} B, B' \ge 0\}. $$
Note that $\sigma_{C}$ is a continuous function on the cone of big divisors (cf. \cite[III.1.7]{Nakayama04}). Now let $D$ be any pseudo-effective $\mathbb{R}$-divisor and let $A$ be any ample $\mathbb{Q}$-divisor. 
Define $$\sigma_{C}(D) = \lim_{\epsilon\to 0} \sigma_C(D + \epsilon A).$$Then $\sigma_C(D)$ exists and is independent of the choice of $A$. There are only finitely many prime divisors $C$ such that $\sigma_C(D) > 0$ and the $\mathbb{R}$-divisor 
$$N_{\sigma}(D) = \sum \sigma_C(D)\cdot C$$ 
is determined by the numerical equivalence class of $D$ (cf. \cite[III.1]{Nakayama04} or \cite[3.3]{BCHM10}).  For the relative case, let $f:X\to U$ be a projective morphism of normal varieties, we can similarly define $\sigma_\Gamma(D/U)$ and $N_{\sigma}(D/U)$ for $f$-pseudo-effective divisors as in \cite[III.4]{Nakayama04}. We note that it is not known that $\sigma _\Gamma (D/U) <+\infty$ always holds (however see \cite[III.4.3]{Nakayama04} for some cases in which the answer is known).
In this paper we will only consider the case in which $ D\sim _{\Q , f}\lambda (K_X+\Delta)$ where $\lambda >0$ and $(X,\Delta )$ is a dlt pair. In this case $\sigma _\Gamma (D/U) <+\infty$ always holds by \cite{BCHM10}.

Let $(X,\Delta)$ be a projective log pair. The group ${\rm Bir}(X,\Delta)$ consists of all birational self maps $\phi:X\dasharrow X$, such that if we let $X^d$ be a resolution of the indeterminacy
\begin{diagram}
&& X^d&&\\
& \ldTo^p & & \rdTo^q\\
X& &\rDashto^{\phi}&&X,
\end{diagram}
then $p^*(K_X+\Delta)=q^*(K_X+\Delta)$ (cf. \cite{Fujino00}).
 For any positive integer $m$,
We call the homomorphism
$$\rho_{m}:{\rm Bir}(X,\Delta)\to {\rm Aut}(H^0(X, \mathcal O_{X}(m(K_{X}+\Delta))))$$
the {\it {\bf B}-representation}.  

\subsection{Canonical bundle formula}
In this subsection, we will give a version of the canonical bundle formula that follows from the work of Kawamata, Fujino-Mori and Ambro (cf. \cite{Kawamata98}, \cite{FM00}, \cite{Ambro04} and \cite{Kollar07}). 
\begin{theorem}\label{t-can}
Let $(X,\Delta)$ be a dlt pair and $f:X\to U$ a projective morphism over 
a normal variety $U$. Then there exists a commutative diagram of projective morphisms
\begin{diagram}
X'&\rTo^{\mu} & X\\
\dTo^{h} & &\dTo^f\\
Y&\rTo^g& U
\end{diagram}
with the following properties
\begin{enumerate}
\item $\mu$ is a birational morphism, $h$ is an equidimensional algebraic fibration, $X'$ has only $\mathbb{Q}$-factorial toroidal singularities and $Y$ is smooth; 
\item there exists a $\Q$-divisor $\Delta'$ on $X'$ with coefficients $\leq 1$, such that $(X',{\rm Supp}(\Delta'))$ is quasi-smooth (i.e., $(X',{\rm Supp}(\Delta'))$ is toriodal and $X'$ is $\mathbb{Q}$-factorial),
and $$\mu_*\mathcal{O}_{X'}(m(K_{X'}+\Delta'))\cong \mathcal{O}_X(m(K_X+\Delta)), \ \forall m\in \mathbb{N};$$
\item there exist  a $\Q$-divisor $B$ and a $\mathbb{Q}$-line bundle $J$ such that $B$ is effective, $K_Y+B+J$ is big over $U$, $J$ is $g$-nef and a positive integer $d$  such that  we can write
$$h^*(K_Y+B+J)+R\sim_{\mathbb{Q}}K_{X'}+\Delta',$$
where  $R\ge 0$ and $h_*\mathcal{O}_{X'}(dm(K_{X'}+\Delta'))\cong \mathcal{O}_Y(dm(K_{Y}+J+B))$, for all $m\in \mathbb{N}$;
\item $(Y,{\rm Supp (B)})$ log smooth, the coefficients of $B$ are $\leq 1$ and each component of $\lfloor B\rfloor$ is dominated by a vertical component of $\lfloor \Delta \rfloor$. 

\end{enumerate}
\end{theorem}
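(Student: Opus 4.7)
The plan is to combine a log resolution with the Abramovich--Karu weak semistable reduction to produce an equidimensional toroidal fibration, and then apply the Ambro--Fujino--Kawamata canonical bundle formula for lc-trivial fibrations.

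First I would take a log resolution $\tilde\mu:\tilde X\to X$ of $(X,\Delta)$ and set $\tilde\Delta:=\tilde\mu_*^{-1}\Delta+F$, where $F$ is the reduced $\tilde\mu$-exceptional divisor, so that the identity $\tilde\mu_*\mathcal O_{\tilde X}(m(K_{\tilde X}+\tilde\Delta))\cong \mathcal O_X(m(K_X+\Delta))$ holds for all $m\in\mathbb N$. After a further blow-up I would arrange that the relative Iitaka fibration of $K_{\tilde X}+\tilde\Delta$ over $U$ is represented by a morphism $\tilde h:\tilde X\to\tilde Y$ with $\tilde Y$ smooth and projective over $U$, and on which $K_{\tilde X}+\tilde\Delta$ is pulled back from a big divisor over $U$ modulo effective $\tilde\mu$-exceptional contributions.

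Next I would apply the toroidalization and weak semistable reduction theorems of Abramovich--Karu: after a generically finite base change $Y\to\tilde Y$ (with $Y$ again smooth) and a compatible birational modification of $\tilde X$, one obtains an equidimensional toroidal fibration $h:X'\to Y$ with $X'$ having $\mathbb Q$-factorial toroidal singularities. I would take $\Delta'$ to be the strict transform of $\tilde\Delta$ plus all newly introduced exceptional divisors with coefficient one, arranging that $(X',\mathrm{Supp}(\Delta'))$ is quasi-smooth and that $K_{X'}+\Delta'-\mu^*(K_X+\Delta)$ is effective and $\mu$-exceptional; this preserves the pushforward identity and yields (1) and (2).

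Because $Y$ is a relative Iitaka base, $K_{X'/Y}+\Delta'$ restricts to a $\mathbb Q$-trivial divisor on the generic fiber of $h$, so $h$ is an lc-trivial fibration in the sense of Ambro. Applying the canonical bundle formula to $h$ yields $K_{X'}+\Delta'\sim_{\mathbb Q}h^*(K_Y+B+J)+R$ with $B$ the discriminant divisor (coefficients computed by log canonical thresholds along the fibers), $J$ the moduli $\mathbb Q$-line bundle (which is $g$-nef by the semipositivity theorem), $R$ effective, and a suitable integer $d$ for which $h_*\mathcal O_{X'}(dm(K_{X'}+\Delta'))\cong\mathcal O_Y(dm(K_Y+B+J))$; combining this with the identification in (2) gives (3). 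The bigness of $K_Y+B+J$ over $U$ is immediate from the Iitaka construction. Property (4) follows from the explicit formula $\mathrm{coeff}_P(B)=1-\mathrm{lct}_{h^{-1}P}(X',\Delta')$ together with the equidimensionality of $h$: a prime $P$ appears in $\lfloor B\rfloor$ precisely when some component of $\lfloor \Delta'\rfloor$ is $h$-vertical over $P$, and such a component is necessarily the strict transform of a vertical component of $\lfloor\Delta\rfloor$.

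The main obstacle is the simultaneous realization of (1) and (2): the equidimensional toroidal model must be compatible with the pushforward identity $\mu_*\mathcal O_{X'}(m(K_{X'}+\Delta'))\cong\mathcal O_X(m(K_X+\Delta))$. This is handled by choosing the toroidal boundary and the coefficients of $\Delta'$ on the newly introduced components carefully; once the Abramovich--Karu machinery and the canonical bundle formula are in place, the remaining verifications are technical but standard.
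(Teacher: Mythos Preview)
Your overall strategy---Iitaka fibration, Abramovich--Karu toroidalization, then the Fujino--Mori/Ambro canonical bundle formula---is exactly the route the paper takes. The substantive difference, and the place where your argument breaks, is the definition of $\Delta'$.

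You set $\Delta'$ to be the strict transform of $\Delta$ together with \emph{every} $\mu$-exceptional divisor with coefficient~$1$. The paper instead defines $\Delta'$ by the crepant decomposition
\[
\mu^*(K_X+\Delta)+F=K_{X'}+\Delta',\qquad F,\Delta'\geq 0,\ \text{no common components},
\]
so an exceptional divisor $E$ appears in $\Delta'$ only when $a(E;X,\Delta)<0$, and then with coefficient $-a(E;X,\Delta)$. Both choices make $(X',\Delta')$ toroidal with coefficients $\leq 1$ and both give the pushforward identity in (2); that part of your plan is fine.

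The gap is in (4). With your $\Delta'$, a vertical component of $\lfloor\Delta'\rfloor$ can perfectly well be a brand-new exceptional divisor produced by the toroidalization, over a point of $X$ where $(X,\Delta)$ is klt. Such a divisor is not the strict transform of anything in $\lfloor\Delta\rfloor$, and its center on $X$ need not meet $\lfloor\Delta\rfloor$ at all; yet by your own lct formula it forces the corresponding prime $P$ on $Y$ into $\lfloor B\rfloor$. So your sentence ``such a component is necessarily the strict transform of a vertical component of $\lfloor\Delta\rfloor$'' is false for your $\Delta'$. (Concretely: take $(X,\Delta)$ klt. Then $\lfloor\Delta\rfloor=0$, but your $\lfloor\Delta'\rfloor$ contains every exceptional divisor, and vertical ones will typically produce nonzero $\lfloor B\rfloor$.) With the paper's $\Delta'$ this cannot happen: a coefficient-$1$ component of $\Delta'$ is either the strict transform of a component of $\lfloor\Delta\rfloor$ or an lc place of $(X,\Delta)$, whose center lies in a stratum of $\lfloor\Delta\rfloor$; this is what links $\lfloor B\rfloor$ back to $\lfloor\Delta\rfloor$.

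One smaller point: weak semistable reduction in the Abramovich--Karu form requires only \emph{birational} modification of the base, not a generically finite cover; your phrasing suggests a finite base change that is neither needed nor used in the paper.
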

\begin{proof}
We may choose a birational projective morphism $\mu:X'\to X$, such that there exists a projective morphism $h:X'\to Y $ of smooth projective varieties over $U$ and the restriction $h_{\eta}:X'_\eta \to Y_\eta $  over the generic point $\eta$ of $U$ is birational to the the Iitaka fibration of $X_{\eta}$ over $\eta$.
By the weak semi-stable reduction theorem of Abramovich and Karu (cf. \cite{AK00}), we can assume that, $h:(X',D')\to (Y,D_Y)$ is an equidimensional  toroidal morphism for some divisors $D'$ on $X'$ and $D_Y$ on $Y$ where $(X',D')$ is quasi-smooth, $Y$ is smooth and  $\mu^{-1}(\Delta\cup {\rm Sing}(X))\subset D'$ (see \cite[2]{Kawamata10}). Therefore, if we write $\mu^*(K_X+\Delta)+F=K_{X'}+\Delta'$, where $F$ and $\Delta'$ are effective with no common components, then ${\rm Supp}(\Delta')\subset D'$. Clearly $(X',\Delta')$ satisfies (1) and (2).

 It follows from the proof of \cite[4.5]{FM00} that there exists a $\mathbb{Q}$-divisor $R$ on $X'$ such that 
\begin{enumerate}
\item[$\bullet$] $K_{X'}+\Delta'\sim_{\mathbb{Q}}h^*(K_Y+B+J)+R$, where $B$ is {\it the  boundary part} and $J$ is {\it the moduli part}.
\item[$\bullet$] $R $ is effective and $h_*\mathcal{O}_{X'}(iR)\cong \mathcal{O}_{Y}$ for all $i\ge 0$. 
\end{enumerate}
To see that $R$ is effective, write  $R=R^{>0}-R^{<0}$ where $R^{>0}$ and $R^{<0}$ are effective with no common components and recall that by \cite[4.5(ii)]{FM00}, 
${\rm codim}(h(R^{<0})\subset Y')\ge 2.$ Since $h$ is equidimensional, we have $R^{<0}=0$.

If we write $B=\sum t_PP$, where $P$ are codimension 1 points on $Y$, then $t_P=1-s_P$, where $s_P$ is the log canonical threshold of $h^{-1}(P)$ with respect to $(X,\Delta'-R)$ over the generic point of  $P$ (cf. \cite[4.3]{FM00}, \cite[8.5.1]{Kollar07}).
 Since $h_*\mathcal{O}_{X'}(iR)\cong \mathcal{O}_{Y}$, it follows that
${\rm Supp}(R)$ does not contain all the components of $h^{-1}(P)$ 
which dominate $P$. Thus we have $s_P\le1$ and so $t_P\ge 0$. On the other hand, $R$ is effective over the generic point of $P$. Therefore, $s_P$ is not less than the log canonical threshold of $h^{-1}(P)$ with respect to $(X',\Delta ')$ over the generic point of $P$. We also have ${\rm Supp}(B)\subset D_Y $. Thus we obtain (4).

To verify that $J$ is nef over $U$, we first remark that by \cite[1(4)]{Kawamata10}, the $\mathbb{Q}$-line bundle $J$ computed from $X'\to Y$ commutes 
with any pull back in the sense of \cite[8.4.9(3)]{Kollar07}. Since $J$ is defined by a variation of mixed Hodge structures (cf. \cite[8.4.5(7)]{Kollar07}), 
its $g$-nefness follows from \cite{Fujino04} or \cite[25]{Kawamata10}.

\end{proof}

We need the following simple lemma.
\begin{lemma}\label{l-proj}
Let $h:X\to Y$ and $h':X'\to Y'$ be a projective algebraic fiber space over $U$. 
Let $\mu: X\dashrightarrow X'$ and $\eta: Y \dashrightarrow Y'$ be birational contractions. Let $D$ be a $\mathbb{Q}$-Cartier divisor on $X$ such that $D\sim_{\mathbb{Q},U} h^*E$ for some $\mathbb{Q}$-Cartier  $\mathbb{Q}$-divisor $E$ on $Y$. Assume that $\eta_*E$ is $\mathbb{Q}$-Cartier  on $Y'$.

 If $\mu_*D\sim_{\mathbb{Q},Y'} 0$ 
 then $\mu_* D \sim_{\mathbb{Q},U}h'^*(\eta_*E). $
\end{lemma}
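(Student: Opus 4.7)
The plan is to transfer the relation $D\sim_{\mathbb{Q},U}h^*E$ across the two birational contractions by pulling back to a common resolution and then pushing forward. First I would choose a smooth projective model $W$ with morphisms $p\colon W\to X$ and $q\colon W\to X'$ resolving $\mu$, and a smooth projective model $V$ with morphisms $a\colon V\to Y$ and $b\colon V\to Y'$ resolving $\eta$; after possibly blowing up further I may assume there is a morphism $\pi\colon W\to V$ satisfying $a\pi=hp$ and $b\pi=h'q$, using the implicit compatibility $h'\circ\mu=\eta\circ h$.

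Since $\mu$ is a birational contraction, $\mu^{-1}$ contracts no divisor, which yields $q_*p^*D=\mu_*D$; symmetrically $b_*a^*E=\eta_*E$. Using the hypothesis $\mu_*D\sim_{\mathbb{Q},Y'}0$ together with $(h')_*\mathcal{O}_{X'}=\mathcal{O}_{Y'}$, I pick a $\mathbb{Q}$-Cartier divisor $L$ on $Y'$ with $\mu_*D\sim_{\mathbb{Q}}(h')^*L$; it then suffices to prove $L\sim_{\mathbb{Q},U}\eta_*E$ on $Y'$. Set $E_W:=p^*D-q^*\mu_*D$. Then $q_*E_W=\mu_*D-\mu_*D=0$, so $E_W$ is $q$-exceptional. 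On $W$ one has $p^*D\sim_{\mathbb{Q},U}\pi^*a^*E$ and $q^*\mu_*D\sim_{\mathbb{Q}}\pi^*b^*L$, whence
\[ \pi^*\bigl(b^*L-a^*E\bigr)\sim_{\mathbb{Q},U}-E_W \quad\text{on }W. \]

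Applying $\pi_*$ (which preserves $\sim_{\mathbb{Q},U}$: pullbacks from $U$ through $W$ and through $V$ coincide thanks to $a\pi=hp$, and $\pi_*\pi^*$ is the identity on $\mathbb{Q}$-Cartier divisors) I obtain $b^*L-a^*E\sim_{\mathbb{Q},U}-\pi_*E_W$ on $V$. Applying $b_*$, the left side becomes $L-\eta_*E$, while the right side equals $-(b\pi)_*E_W=-(h'q)_*E_W=-h'_*q_*E_W=0$. Therefore $L\sim_{\mathbb{Q},U}\eta_*E$ on $Y'$, and substituting back yields $\mu_*D\sim_{\mathbb{Q}}(h')^*L\sim_{\mathbb{Q},U}(h')^*\eta_*E$. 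The only delicate points are the two birational-contraction identities $q_*p^*D=\mu_*D$ and $b_*a^*E=\eta_*E$ and the compatibility of pushforward along a birational morphism with relative $\mathbb{Q}$-linear equivalence over $U$; once these are secured, the remainder is a routine application of the projection formula.
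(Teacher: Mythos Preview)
Your argument has a genuine gap at the step ``Applying $\pi_*$''. The map $\pi\colon W\to V$ goes from a birational model of $X$ to a birational model of $Y$, so $\dim W=\dim X$ while $\dim V=\dim Y$. Since $h\colon X\to Y$ is an algebraic fiber space (and in the applications of this lemma is far from birational), one has $\dim W>\dim V$ in general. In that regime, proper pushforward of cycles sends a divisor on $W$ to a $(\dim W-1)$-dimensional cycle on $V$, which is zero as soon as $\dim W-1\geq\dim V$. Thus the identity $\pi_*\pi^*=\mathrm{id}$ you invoke is false---it is the zero map---and the displayed relation $b^*L-a^*E\sim_{\mathbb{Q},U}-\pi_*E_W$ collapses to the vacuous $0\sim_{\mathbb{Q},U}0$, from which nothing about $L-\eta_*E$ follows. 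Replacing cycle pushforward by injectivity of $\pi^*$ on relative Picard groups does not immediately rescue the argument either: you would need $E_W$ to lie in the image of $\pi^*$, but $E_W$ is only known to be $q$-exceptional, not to be pulled back from $V$.

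The paper's proof avoids resolutions altogether. After absorbing the relation $\sim_{\mathbb{Q},U}$ into $E$ one may assume $D=h^*E$ literally. Then one checks directly that $F:=\mu_*D-h'^*(\eta_*E)$ is a divisor on $X'$ whose image in $Y'$ has codimension at least $2$: over the big open subset of $Y'$ on which $\eta^{-1}$ is a morphism, pulling back $E$ via $h$ and pushing forward by $\mu$ coincides with pulling back $\eta_*E$ via $h'$. Since moreover $F\sim_{\mathbb{Q},Y'}0$ and $h'$ is an algebraic fiber space, $F$ is the $h'$-pullback of a $\mathbb{Q}$-Cartier divisor on $Y'$ supported in codimension $\geq 2$, hence $F=0$.
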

\begin{proof}Replacing $E$ by a relatively $\mathbb{Q}$-linearly equivalent $\mathbb{Q}$-divisor, we can assume that $D= h^*E$ (cf. \cite[8.3.5]{Kollar07}).

If $\mu_*D\sim_{\mathbb{Q},Y'} 0$,   then $F:=\mu_* D -h'^*(\eta_*E)$ is a divisor on $X'$ whose image in $Y'$ is of codimension $\geq 2$. By our assumption, it follows that $F\sim_{\mathbb{Q},Y'}0$ and so $F=0$ (cf. \cite[8.3.5]{Kollar07}).
\end{proof}

\subsection{Minimal Models}
In this subsection, we will collect results on the existence of good minimal models. A large part of them are already known. 

\begin{theorem}\label{t-bchm} Let $f:X\to U$ be a projective morphism, $(X,\Delta )$ a $\Q$-factorial dlt pair, $S=\lfloor \Delta \rfloor$ the non-klt locus. Assume that 
either 
\begin{enumerate}
\item $\Delta $ is big over $U$ and no strata of $S$ is contained in ${\bf B}_+(\Delta /U )$,
or 
\item $K_X+\Delta $ is  big over $U$ and no strata of $S$ is contained in ${\bf B}_+(K_X+\Delta /U)$, or
\item  $K_X+\Delta $ is not pseudo-effective over $U$.
\end{enumerate}
Then the $(K_X+\Delta) $-minimal model program with scaling of an ample divisor over $U$ terminates with either a good minimal model or a Fano contraction.
\end{theorem}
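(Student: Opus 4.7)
The plan is to deduce all three cases from the main results of \cite{BCHM10}, namely \cite[Corollary 1.4.1]{BCHM10} (existence of flips for $\mathbb{Q}$-factorial dlt pairs) and \cite[Corollary 1.4.2]{BCHM10} (termination of the MMP with scaling for $\mathbb{Q}$-factorial klt pairs with big boundary), via a standard perturbation of the dlt pair $(X,\Delta)$ to an auxiliary klt pair.

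For case (3), the $(K_X+\Delta)$-MMP with scaling of an ample divisor over $U$ exists step-by-step by \cite[Corollary 1.4.1]{BCHM10}, and by a standard perturbation of $\Delta$ to a klt pair with big boundary (adding a small general ample divisor and simultaneously decreasing the coefficients along $S$) together with \cite[Corollary 1.4.2]{BCHM10}, one obtains termination. Since $K_X+\Delta$ is not pseudo-effective over $U$ and pseudo-effectivity is a birational invariant preserved by the steps of the MMP, the program cannot terminate at a minimal model and must end at a Fano contraction.

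For case (1), the hypothesis that no stratum of $S$ is contained in ${\bf B}_+(\Delta/U)$ gives a decomposition
\[
\Delta \sim_{\mathbb{Q},U} A + E
\]
with $A$ an ample $\mathbb{Q}$-divisor over $U$, $E\ge 0$, and both $\mathrm{Supp}(A)$ and $\mathrm{Supp}(E)$ containing no stratum of $S$ (the condition on $A$ being arranged by replacing $A$ with a general $\mathbb{Q}$-linearly equivalent divisor). For $0<t\ll 1$, set $\Delta_t := (1-t)\Delta + tA + tE$. Then $K_X+\Delta_t\sim_{\mathbb{Q},U}K_X+\Delta$, the boundary $\Delta_t$ is big over $U$ (it contains the ample piece $tA$), and $(X,\Delta_t)$ is klt. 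Indeed, the coefficient of $\Delta_t$ along each $S_i$ is $1-t<1$, and for any exceptional divisor $E_j$ on a log resolution $p:Y\to X$ lying over a log canonical center of $(X,\Delta)$,
\[
a(E_j;X,\Delta_t) = a(E_j;X,\Delta) + t\cdot\mathrm{ord}_{E_j}(p^*\Delta) > 0,
\]
since $\mathrm{ord}_{E_j}(p^*A)=\mathrm{ord}_{E_j}(p^*E)=0$ (the image of $E_j$, contained in a stratum of $S$, is disjoint from $\mathrm{Supp}(A)\cup\mathrm{Supp}(E)$) and $\mathrm{ord}_{E_j}(p^*\Delta)\ge 1$. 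Applying \cite[Corollary 1.4.2]{BCHM10} to $(X,\Delta_t)$ then yields termination of the $(K_X+\Delta_t)$-MMP with scaling at a Fano contraction or a good minimal model, which coincides with the $(K_X+\Delta)$-MMP by the $\mathbb{Q}$-linear equivalence of canonical classes.

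Case (2) is handled analogously using the decomposition $K_X+\Delta\sim_{\mathbb{Q},U}A+G$ supplied by the hypothesis on ${\bf B}_+(K_X+\Delta/U)$; since $\Delta$ itself may fail to be big, the perturbation must be constructed slightly differently, combining the addition of a small multiple of $A+G$ to $\Delta$ with a decrease of the coefficients along $S$, so that the resulting pair is klt, has big boundary, and has canonical class $\mathbb{Q}$-linearly equivalent to a positive rational multiple of $K_X+\Delta$. The klt verification proceeds via the same multiplicity computation displayed above. The main technical obstacle in all three cases is precisely this klt verification—namely ensuring that every exceptional divisor over a log canonical center of $(X,\Delta)$ acquires strictly positive log discrepancy in the perturbed pair—and this is guaranteed by the chosen decomposition of $\Delta$ (resp.\ $K_X+\Delta$) avoiding all strata of $S$.
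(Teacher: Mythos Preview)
Your approach---reduce to the klt case of \cite{BCHM10} by a perturbation---is exactly the intended unpacking of the paper's one-line citation, and your treatment of case~(1) is correct: writing $\Delta\sim_{\mathbb Q,U}A+E$ with $A$ ample and no stratum of $S$ in $\mathrm{Supp}(A)\cup\mathrm{Supp}(E)$, the pair $(X,(1-t)\Delta+t(A+E))$ is klt with big boundary and has the same log canonical class, so the two MMPs coincide.

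There is, however, a genuine gap in your case~(3). The perturbation you describe (``add a small general ample divisor and decrease the coefficients along $S$'') produces a klt pair $(X,\Theta)$ with big boundary, but $K_X+\Theta$ is \emph{not} $\mathbb Q$-linearly equivalent to any positive multiple of $K_X+\Delta$: indeed $\Theta\sim_{\mathbb Q}\lambda\Delta+(\lambda-1)K_X$ forces $(\lambda-1)(K_X+\Delta)$ to be effective up to $\sim_{\mathbb Q}$, which fails precisely because $K_X+\Delta$ is not pseudo-effective. Hence termination of the $(K_X+\Theta)$-MMP says nothing directly about the $(K_X+\Delta)$-MMP. The standard fix is to insert the scaling divisor into the perturbation: since each step is $(K_X+\Delta+t_iH)$-non-positive with $K_{X_i}+\Delta_i+t_iH_i$ nef, one checks that $K_X+\Delta+t_iH$ is pseudo-effective on $X$, whence all thresholds satisfy $t_i\ge\tau_0>0$ (the pseudo-effective threshold). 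Fixing rational $0<\tau<\tau_0$, every step is then a step of the $(K_X+\Delta+\tau H)$-MMP; now $\delta\Delta+\tau H$ is ample for $0<\delta\ll\tau$, so choosing a general $A'\sim_{\mathbb Q}\delta\Delta+\tau H$ gives $\Theta:=(1-\delta)\Delta+A'\sim_{\mathbb Q}\Delta+\tau H$ with $(X,\Theta)$ klt and $\Theta$ big, and \cite{BCHM10} applies. Your case~(2) sketch has the same issue in milder form: simply ``adding $\epsilon(A+G)$ and decreasing along $S$'' does not yield a proportional log canonical class; one must instead absorb a small multiple of $\Delta$ into the ample part $A$ (possible since $\delta\Delta+\epsilon A$ is ample for $\delta\ll\epsilon$) before decreasing.
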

\begin{proof} This is an immediate consequence of 
\cite[1.3.3, 1.4.2]{BCHM10} and their proofs.
\end{proof}
\begin{lemma}\label{l-gm} Let $f:X\to U$ be a projective morphism, $(X,\Delta )$ a dlt pair and $\phi :X\dasharrow X_M$ and $\phi ':X\dasharrow X_M'$ be minimal models for $K_X+\Delta$ over $U$. Then 
\begin{enumerate}
\item the set of $\phi$-exceptional divisors concides with the set of divisors contained in ${\bf B}_-(K_X+\Delta /U)$ and if $\phi$ is a good minimal model for  $K_X+\Delta$ over $U$, then this set also coincides with the set of divisors contained in ${\bf B}(K_X+\Delta /U)$,
\item $X'_M\dasharrow X_M$ is an isomorphism in codimension $1$ such that
$a(E;X_M,\phi _* \Delta )=a(E;X_M',\phi '_* \Delta )$ for any divisor $E$ over $X$, and
\item if $\phi$ is a good minimal model of $K_X+\Delta$  over $U$, then so is $\phi ' $.
\end{enumerate}
\end{lemma}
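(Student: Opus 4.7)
The plan is to establish (1) directly from the $(K_X+\Delta)$-negativity of $\phi$ and Nakayama's decomposition, use (1) to deduce that the two minimal models are isomorphic in codimension one, then run the standard negativity-lemma argument to obtain the discrepancy equality in (2), and finally descend the Iitaka fibration across a common resolution to get (3). For (1), I would take a common log resolution $p\colon W\to X$, $q\colon W\to X_M$. Since $\phi$ is $(K_X+\Delta)$-negative by the definition of a minimal model, we may write
\[ p^*(K_X+\Delta)=q^*(K_{X_M}+\Delta_M)+F, \]
with $F\ge 0$ and $\mathrm{Supp}(p_*F)$ equal to the set of $\phi$-exceptional divisors. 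Since $q^*(K_{X_M}+\Delta_M)$ is $f$-nef, its relative Nakayama part vanishes, so $N_\sigma(K_X+\Delta/U)=p_*F$. Because the prime divisors contained in ${\bf B}_-(K_X+\Delta/U)$ are precisely the support of $N_\sigma(K_X+\Delta/U)$, this identifies them with the $\phi$-exceptional divisors. If in addition $\phi$ is a good minimal model, then $K_{X_M}+\Delta_M$ is semi-ample over $U$, so ${\bf B}(K_{X_M}+\Delta_M/U)$ contains no prime divisor, and pulling back through $\phi$ shows that the divisorial part of ${\bf B}(K_X+\Delta/U)$ again coincides with the $\phi$-exceptional set.

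For (2), part (1) shows that $\phi$ and $\phi'$ contract the same divisors, so $\psi=\phi'\circ\phi^{-1}\colon X_M\dasharrow X_M'$ is an isomorphism in codimension one with $\psi_*\Delta_M=\Delta_M'$. Take a common log resolution $r\colon W\to X_M$, $s\colon W\to X_M'$ and write
\[ r^*(K_{X_M}+\Delta_M)-s^*(K_{X_M'}+\Delta_M')=E_1-E_2, \]
where $E_1,E_2\ge 0$ have no common components; both $E_i$ are then $r$- and $s$-exceptional. The divisor $-(E_1-E_2)=s^*(K_{X_M'}+\Delta_M')-r^*(K_{X_M}+\Delta_M)$ is $r$-nef, since the second summand is trivial on $r$-fibers while the first is $f$-nef. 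As $r_*(E_1-E_2)=0$, the negativity lemma gives $E_1\le E_2$; the symmetric argument with $s$ yields $E_1=E_2=0$, hence $r^*(K_{X_M}+\Delta_M)=s^*(K_{X_M'}+\Delta_M')$. Computing discrepancies on any resolution dominating $W$ then produces $a(E;X_M,\Delta_M)=a(E;X_M',\Delta_M')$ for every divisor $E$ over $X$.

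For (3), assume $K_{X_M}+\Delta_M\sim_{\mathbb{Q},U}\pi^*H$ for some morphism $\pi\colon X_M\to Z$ over $U$ with $H$ ample over $U$. By (2) we have $s^*(K_{X_M'}+\Delta_M')=(\pi\circ r)^*H$, so any curve $C\subset W$ contracted by $s$ satisfies $(\pi\circ r)^*H\cdot C=0$; ampleness of $H$ forces $\pi\circ r$ to be constant on $C$. The rigidity lemma then supplies a morphism $\pi'\colon X_M'\to Z$ with $\pi'\circ s=\pi\circ r$, and pulling back through the birational morphism $s$ gives $K_{X_M'}+\Delta_M'\sim_{\mathbb{Q},U}(\pi')^*H$, so $K_{X_M'}+\Delta_M'$ is semi-ample over $U$. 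The main technical obstacle throughout is the negativity-lemma bookkeeping in (2); once the identity $r^*(K_{X_M}+\Delta_M)=s^*(K_{X_M'}+\Delta_M')$ is established, (1) and (3) follow essentially formally.
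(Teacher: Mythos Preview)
Your argument follows essentially the same route as the paper's: Nakayama's $N_\sigma$ for (1), the negativity lemma for (2), and the pullback identity $r^*(K_{X_M}+\Delta_M)=s^*(K_{X_M'}+\Delta_M')$ for (3). One small point in (1): the implication from ``$q^*(K_{X_M}+\Delta_M)$ is nef'' to ``$N_\sigma(K_X+\Delta/U)=p_*F$'' is not just subadditivity---you need that $F$ is $q$-exceptional (automatic, since $q_*F=\phi_*\Delta-\phi_*\Delta=0$) and then invoke \cite[III.5.14--15]{Nakayama04}, which is exactly what the paper cites; your rigidity-lemma construction of $\pi'$ in (3) is a perfectly good substitute for the paper's bare assertion that semiampleness passes across the pullback identity.
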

\begin{proof} Let $p:Y\to X$ and $q:Y\to X_M$ be a common resolution.
Since $\phi$ is $(K_X+\Delta)$-negative, we have that $p^*(K_X+\Delta)=q^*(K_{X_M}+\phi _* \Delta )+E$ where $E$ is effective, $q$-exceptional and the support of $p_*E$ is the set of $\phi$-exceptional divisors.
By \cite[5.14]{Nakayama04}, we have $N_\sigma (p^*(K_{X}+\Delta )/U)=E$.
By \cite[5.15]{Nakayama04}, we have $N_\sigma (K_{X}+\Delta /U)=p_* E$.
This proves (1).

It follows from (1) that $X'_M\dasharrow X_M$ is an isomorphism in codimension $1$. By the Negativity Lemma (cf. \cite[3.6.2]{BCHM10}), we have that
$a(E;X_M,\phi _* \Delta )=a(E;X_M',\phi '_* \Delta )$ for any divisor $E$ over $X$. Thus (2) holds.

Let $p:Y\to X'_M$ and $q:Y\to X_M$ be a common resolution. By (2), we have that $p^*(K_{X_M'}+\phi '_*\Delta)=q^*(K_{X_M}+\phi _* \Delta )$, and so both of these are semiample over $U$ if one of them is. (3) follows immediately.
\end{proof}

\begin{lemma}\label{l-1} Let $f:X\to U$ be a projective morphism, $(X,\Delta )$ a dlt pair and $\phi :X\dasharrow X'$ a birational contraction over $U$ such that $X'$ is $\Q$-factorial and the support of  ${\bf Fix}(K_X+\Delta /U)$ (i.e., the divisorial components of ${\bf B}(K_X+\Delta/U)$) equals the set of $\phi$-exceptional divisors. 
If $ (X,\Delta )$ has a good minimal model over $U$ say $\psi: X\dasharrow X_M$ and $K_{X'}+\phi _*\Delta $ is nef over $U$, then $\phi$ is a  minimal model of $(X,\Delta)$ over $U$.\end{lemma}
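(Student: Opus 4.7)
The plan is to verify the one non-trivial property required of a minimal model that is not already granted by hypothesis, namely that $\phi$ is $(K_X+\Delta)$-negative on every $\phi$-exceptional divisor.

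First I would identify the $\phi$-exceptional divisors with the $\psi$-exceptional divisors. Since $\psi$ is a \emph{good} minimal model, Lemma \ref{l-gm}(1) shows that the set of $\psi$-exceptional divisors coincides with the divisorial components of ${\bf B}(K_X+\Delta/U)$, which by assumption is precisely the set of $\phi$-exceptional divisors. Since both $\phi$ and $\psi$ are birational contractions, the induced birational map $\xi := \phi\circ\psi^{-1} : X_M \dasharrow X'$ contracts no divisors, and its inverse $\phi\circ\psi^{-1}$ contracts no divisors either; hence $\xi$ is an isomorphism in codimension one, and pushing forward divisors we have $\xi_*(K_{X_M}+\psi_*\Delta) = K_{X'}+\phi_*\Delta$.

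Next I would compare the two log canonical classes on a common resolution. Let $p : W \to X$, $q : W \to X'$ and $r : W \to X_M$ be a common log resolution of the birational maps involved, and set
\[ G := q^*(K_{X'}+\phi_*\Delta) - r^*(K_{X_M}+\psi_*\Delta). \]
A direct pushforward computation, combined with the codimension-one identification from the previous step, shows that $q_*G = 0 = r_*G$, so $G$ is simultaneously $q$-exceptional and $r$-exceptional. Moreover $K_{X_M}+\psi_*\Delta$ is semi-ample over $U$ and $K_{X'}+\phi_*\Delta$ is nef over $U$, so both $r^*(K_{X_M}+\psi_*\Delta)$ and $q^*(K_{X'}+\phi_*\Delta)$ are nef over $U$; since $W\to U$ factors through each of $q$ and $r$, any curve contracted by $q$ (resp. by $r$) is contracted by $W\to U$, so each pullback is $q$-nef (resp. $r$-nef). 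Applying the negativity lemma (cf. \cite[3.6.2]{BCHM10}) twice then gives $G\ge 0$ (from the $q$-nefness of $-G$) and $G\le 0$ (from the $r$-nefness of $G$), whence $G = 0$.

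Finally, from $q^*(K_{X'}+\phi_*\Delta) = r^*(K_{X_M}+\psi_*\Delta)$ I read off $a(E;X',\phi_*\Delta) = a(E;X_M,\psi_*\Delta)$ for every geometric valuation $E$ over $X$. As every $\phi$-exceptional divisor is also $\psi$-exceptional and $\psi$ is a minimal model,
\[ a(E;X,\Delta) > a(E;X_M,\psi_*\Delta) = a(E;X',\phi_*\Delta) \]
for every $\phi$-exceptional $E$, so $\phi$ is $(K_X+\Delta)$-negative and hence a minimal model. I expect the subtlest point to be the double exceptionality of $G$: although $X'$ and $X_M$ are typically not isomorphic, the equality of exceptional sets produced in the first step forces $\xi$ to be a codimension-one isomorphism, which is exactly what makes both $q_*G$ and $r_*G$ vanish.
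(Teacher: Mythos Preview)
Your proof is correct and follows essentially the same approach as the paper: identify the $\phi$- and $\psi$-exceptional divisors via Lemma~\ref{l-gm}(1), deduce that $X_M\dasharrow X'$ is an isomorphism in codimension one, and then use the Negativity Lemma to conclude that discrepancies agree. The paper compresses your common-resolution argument into a single citation of \cite[3.6.2]{BCHM10}, whereas you spell out the double application of negativity explicitly; there is also a harmless typo where you write ``its inverse $\phi\circ\psi^{-1}$'' for $\psi\circ\phi^{-1}$.
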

\begin{proof} Since $\psi:X\dasharrow X_M$ is a good minimal model, by \eqref{l-gm} the support of
${\bf Fix}(K_X+\Delta /U)$ equals the set of $\psi$-exceptional divisors and so $X'\dasharrow X_M$ is an isomorphism in codimension $1$. But then, by the Negativity Lemma (cf. \cite[3.6.2]{BCHM10}), it follows that 
$a(E,X',\phi _* \Delta )=a(E,X_M,\psi _* \Delta )$ for all divisors $E$ over $X'$. Thus $\phi$ is $(K_X+\Delta)$-negative.
\end{proof}

\begin{lemma}\label{l-rel} 
Let $f:X\to U$ be a projective morphism, $(X,\Delta )$ and  $(X,\Delta ' )$ be pairs such that $(X,\Delta _t:=(1-t)\Delta +t\Delta' )$ is a dlt pair for any $0\leq t\leq 1$.
Assume that $K_X+\Delta$ is semi-ample over $U$ and $g:X\to Z:={\rm Proj }R(X/U,K_X+\Delta )$ is the corresponding morphism.
If $(X,\Delta ')$  admits a good minimal model $h:X\dasharrow X_M$ over $Z$, 
then
$h$ is a minimal model of $(X,\Delta _t)$  over $U$ for all $0<t \ll 1$.\end{lemma}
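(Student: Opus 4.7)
My plan is to verify the three conditions defining a minimal model of $(X,\Delta_t)$ over $U$: that $X_M$ is $\Q$-factorial (inherited from its role as a minimal model of $(X,\Delta')$), that $h$ is $(K_X+\Delta_t)$-negative, and that $K_{X_M}+h_*\Delta_t$ is nef over $U$. Since $h$ is a birational contraction over $Z$, there is an induced projective morphism $g_M:X_M\to Z$. Writing $K_X+\Delta\sim_{\Q,U}g^*A$ for an ample $\Q$-divisor $A$ on $Z/U$, I would first invoke Lemma~\ref{l-proj} (with $D=K_X+\Delta$, $E=A$, $\eta=\mathrm{id}_Z$; the hypothesis $h_*(K_X+\Delta)\sim_{\Q,Z}0$ holds because $K_X+\Delta$ is pulled back from $Z$ and pushforward by a birational contraction preserves $\Q$-linear equivalence) to conclude
$$K_{X_M}+h_*\Delta\sim_{\Q,U}g_M^*A,$$
so that $K_{X_M}+h_*\Delta$ is semi-ample, hence nef, over $U$.

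For the negativity, pick a common log resolution $p:W\to X$, $q:W\to X_M$ with $g\circ p=g_M\circ q$ and write $p^*(K_X+\Delta_s)=q^*(K_{X_M}+h_*\Delta_s)+E_s$ for $s=0,1$ (where $\Delta_0:=\Delta$, $\Delta_1:=\Delta'$), so that $E_t=(1-t)E_0+tE_1$. By hypothesis $E_1\geq 0$ with $\supp(p_*E_1)$ equal to the set of $h$-exceptional divisors. For $E_0$, since $h^{-1}$ contracts no divisors, any prime divisor on $W$ non-exceptional for both $p$ and $q$ has matching coefficients in $p^*(K_X+\Delta)$ and $q^*(K_{X_M}+h_*\Delta)$, so $E_0$ is $q$-exceptional; combined with $E_0\sim_{\Q,U}0$ (which follows from the displayed equivalence above and $g\circ p=g_M\circ q$, and implies $q$-numerical triviality), the Negativity Lemma \cite[3.6.2]{BCHM10} forces $E_0=0$. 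Therefore $E_t=tE_1$ is effective with $\supp(p_*E_t)$ equal to the $h$-exceptional set for every $t\in(0,1]$, so $h$ is $(K_X+\Delta_t)$-negative.

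The main obstacle is to verify that $L_t:=K_{X_M}+h_*\Delta_t=(1-t)L+tM$ is nef over $U$ for $0<t\ll 1$, where $L:=K_{X_M}+h_*\Delta$ is pulled back from an ample divisor on $Z/U$ and $M:=K_{X_M}+h_*\Delta'$ is nef over $Z$. On any curve contracted by $g_M$ one has $L\cdot C=0$ and $M\cdot C\geq 0$, so $L_t\cdot C\geq 0$ automatically, and the real issue is controlling curves mapping dominantly to their image under $g_M$. I plan to apply the relative cone theorem to the dlt pair $(X_M,h_*\Delta')$ over $U$ to reduce to finitely many $M$-negative extremal rays, and then use Kawamata's boundedness of length of extremal rays to find generators $C_i$ with $-M\cdot C_i\leq 2\dim X$. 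On such a $C_i$ one must have $L\cdot C_i>0$ (otherwise $C_i$ would be $g_M$-contracted, contradicting $M\cdot C_i<0$ together with $M$-nefness over $Z$), and after passing to a multiple making $A$ a very ample Cartier divisor one obtains a uniform positive lower bound on $L\cdot C_i$. Combining the two estimates yields $L+\epsilon M$ nef over $U$ for all sufficiently small $\epsilon>0$, and the rescaling $\epsilon=t/(1-t)$ then produces the required nefness of $L_t$ for $0<t\ll 1$, completing the verification that $h$ is a minimal model of $(X,\Delta_t)$ over $U$.
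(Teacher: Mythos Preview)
Your proof is correct and follows essentially the same approach as the paper: both establish nefness of $K_{X_M}+h_*\Delta_t$ over $U$ via Kawamata's bound on the length of extremal rays (the paper makes the threshold explicit as $t\leq \frac{1}{1+2m\dim X}$ where $mH$ is Cartier for $H$ ample on $Z$), and both derive $(K_X+\Delta_t)$-negativity from the fact that $K_X+\Delta\sim_{\Q,Z}0$, which forces $E_0=0$ exactly as you argue via the Negativity Lemma. One caveat on wording: the cone theorem may produce infinitely many $(K_{X_M}+h_*\Delta')$-negative extremal rays rather than finitely many, but your argument only uses the uniform bound $-M\cdot C_i\leq 2\dim X$ together with the uniform lower bound $L\cdot C_i\geq 1/m$, so this does not affect correctness.
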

\begin{proof} We follow ideas from \cite{Shokurov96}. By our assumption, there is a $\Q$-divisor $H$ on $Z$ which is ample over $U$ such that $K_X+\Delta \sim _{\Q , U} g^*H$. Let $m>0$ be an integer such that $mH$ is Cartier. We claim that $K_{X_M}+h_*\Delta _t$ is nef over $U$ for any $0\leq t\leq \frac 1{1+2m\dim X}$.

Suppose not, then $K_{X_M}+h_*\Delta _t$ is not nef over $U$, and there is a
 $(K_{X_M}+h_*\Delta _t)$-negative extremal ray $R$ in $\overline{\rm NE}(X_M/U)$.
Note that as $K_{X_M}+h_*\Delta$ is nef over $U$,  $R$ is also a $(K_{X_M}+h_*\Delta ')$-negative extremal ray and so
it is spanned by a curve $\Sigma$ such that $0<-(K_{X_M}+h_*\Delta ')\cdot \Sigma\leq 2\dim X$ by a result of Kawamata (cf. \cite{Kawamata91} or \cite[3.8.2]{BCHM10}). Moreover, as $K_{X_M}+h_*\Delta_t$ is nef over $Z$, we have that $g_{M*}\Sigma \ne 0$ where $g_M:X_M\to Z$. Thus
$$0>(K_{X_M}+h_*\Delta _t)\cdot \Sigma=t (K_{X_M}+h_*\Delta ')\cdot \Sigma+(1-t)(K_{X_M}+h_*\Delta)\cdot \Sigma$$
$$\geq -2t\dim X +(1-t)H\cdot g_{M*}\Sigma\geq -2t\dim X +(1-t)\frac 1 m\geq 0.$$
This is impossible and so $K_{X_M}+h_*\Delta _t$ is nef over $U$ for all $0<t \leq \frac 1{1+2m\dim X}$.

Since $h$ is $(K_X+\Delta')$-negative and $K_X+\Delta\sim _{\Q,Z}0$,  $h$ is $(K_X+\Delta_t)$-negative for any $0<t\leq 1$, we conclude that $X_M$ is a minimal model of $(X,\Delta_t)$ over $U$.
\end{proof}

We will need the following result.
\begin{lemma}\label{dltterm}
Let $(X,\Delta )$ be a $\Q$-factorial dlt pair, $f:X\to U$ a projective morphism and $A$  an $f$-ample $\Q$-divisor.
Then the following are equivalent.
\begin{enumerate}
\item $R(X/U;K_X+\Delta,K_X+\Delta +A):=\bigoplus _{n_1,n_2\in \mathbb N^2}f_*\mathcal O _X(n_1(K_X+\Delta)+n_2(K_X+\Delta +A))$ is finitely generated,
\item $(X,\Delta )$ has a good minimal model over $U$ and the $(K_X+\Delta)$-MMP  over $U$ with scaling of $A$ terminates. 
\end{enumerate}\end{lemma}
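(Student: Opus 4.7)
The plan is to prove both implications via the chamber decomposition of the two-dimensional cone spanned by $K_X+\Delta$ and $K_X+\Delta+A$, which mediates between finite generation of the bi-graded ring $R$ and the structure of the MMP with scaling of $A$. Observe first that under the substitution $m = n_1+n_2$, $n = n_2$, the ring $R$ coincides with $\bigoplus_{0 \le n \le m} f_*\mathcal{O}_X(m(K_X+\Delta)+nA)$, so finite generation of $R$ is equivalent to uniform finite generation of the section rings of all divisors on the segment from $K_X+\Delta$ to $K_X+\Delta+A$.

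For $(2) \Rightarrow (1)$, suppose $(X,\Delta)$ has a good minimal model and the scaling MMP terminates. It produces a finite sequence of $\Q$-factorial dlt models $X = X_0 \dashrightarrow X_1 \dashrightarrow \cdots \dashrightarrow X_N$ with scaling thresholds $\lambda_0 \ge \lambda_1 \ge \cdots \ge \lambda_{N+1} = 0$, such that on each $X_i$ the divisor $K_{X_i}+\Delta_i+tA_i$ is nef over $U$ for $t \in [\lambda_{i+1},\lambda_i]$. By Lemma \eqref{l-gm}, $X_N$ is itself a good minimal model and $K_{X_N}+\Delta_N$ is semi-ample, inducing $g:X_N \to Z$ over $U$. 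I would then apply Lemma \eqref{l-rel} on $X_N$ to promote $K_{X_N}+\Delta_N+tA_N$ from nef to semi-ample for small $t > 0$, and propagate this backwards through the MMP steps (each being $(K+\Delta+\lambda_i A)$-trivial at its threshold) to obtain semi-ample representatives in every chamber $[\lambda_{i+1},\lambda_i]$. Summing the finitely generated section rings of these semi-ample classes, together with the observation that each MMP step preserves the bi-graded section ring (being $(K+\Delta+tA)$-negative throughout the relevant sub-cone), then yields finite generation of $R$.

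For $(1) \Rightarrow (2)$, assume $R$ is finitely generated. Fix a general effective $A' \sim_\Q A$; for small rational $t > 0$ the pair $(X,\Delta+tA')$ is $\Q$-factorial dlt with big boundary, so Theorem \eqref{t-bchm}(1) produces a good minimal model $\phi_t: X \dashrightarrow X_M^t$ of $L_t := K_X+\Delta+tA$ via MMP with scaling. Finite generation of $R$ forces the chamber decomposition of the cone spanned by $K_X+\Delta$ and $K_X+\Delta+A$ to be rational polyhedral with finitely many chambers (as these correspond to generators of $R$), so the map $\phi_t$ must stabilize for $t \in (0,\epsilon)$; denote the common target by $X_M$. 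On $X_M$, $K_{X_M}+\Delta_M+tA_M$ is semi-ample for $t \in (0,\epsilon)$, and finite generation of $R$ lets one conclude that the limit $K_{X_M}+\Delta_M$ is itself semi-ample, producing the good minimal model of $(X,\Delta)$. Termination of the $(K_X+\Delta)$-MMP with scaling of $A$ is then immediate, since it must traverse the finitely many chambers in the decomposition, each crossing corresponding to a single MMP step.

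The principal technical obstacle in both directions is the passage from nefness to semi-ampleness of the divisors $K_{X_i}+\Delta_i+tA_i$, which is not formal and genuinely requires the finite-generation hypothesis rather than abstract MMP outputs. In $(2) \Rightarrow (1)$, I would address this by careful iteration of Lemma \eqref{l-rel} starting from the good minimal model $X_N$ and propagating backwards through the MMP. In $(1) \Rightarrow (2)$, the subtler point is simultaneously establishing the stabilization of $X_M^t$ as $t \to 0^+$ and the semi-ampleness of the limit $K_{X_M}+\Delta_M$, both of which hinge delicately on the finite polyhedral chamber structure encoded in the finitely generated $\mathcal{O}_U$-algebra $R$.
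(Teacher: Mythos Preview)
Your chamber-decomposition strategy is conceptually on the right track, but the paper's proof is far more economical, and your sketch has a genuine gap in $(1)\Rightarrow(2)$.

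For $(2)\Rightarrow(1)$, the paper avoids your backward-propagation entirely. Since the MMP with scaling terminates at $X_N$, that single model is a good minimal model of $K_X+\Delta+tA$ for \emph{all} $t\in[0,t_0]$ where $t_0=\lambda_N$: nefness is automatic from the scaling structure, semi-ampleness at $t=0$ is the hypothesis (via \eqref{l-gm}), and for $t>0$ it follows from the base-point-free theorem since one can perturb to a klt pair with big boundary. Then \eqref{l-cox} applied to the two semi-ample endpoints on $X_N$ gives finite generation on the sub-cone $[0,t_0]$, while \cite[Theorem~E]{BCHM10} handles the remaining klt-big-boundary range. Your plan to establish semi-ampleness on each intermediate $X_i$ and glue is workable but unnecessary; also, your parenthetical that ``each MMP step preserves the bi-graded section ring'' is false as stated --- the step $X_{i}\dasharrow X_{i+1}$ is $(K_X+\Delta+tA)$-non-positive only for $t\le\lambda_i$, so only that slice of the ring is preserved.

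For $(1)\Rightarrow(2)$, the paper simply cites \cite[6.8,~7.1]{CL10}. Your outline is essentially what that reference proves, but your final sentence is where the gap lies: ``Termination\ldots is then immediate, since it must traverse the finitely many chambers\ldots each crossing corresponding to a single MMP step.'' A wall in the chamber decomposition records where the \emph{ample model} changes, but at a fixed threshold $\lambda$ the MMP can perform several successive $(K_X+\Delta+\lambda A)$-trivial flips before the threshold drops; nothing in ``finitely many chambers'' alone bounds how many. One repairs this by noting that if the sequence does not terminate then $\lambda_\infty:=\lim\lambda_i$ is one of the finitely many wall values; if $\lambda_\infty>0$ the sequence is a non-terminating MMP with scaling for a klt pair with big boundary, contradicting \cite{BCHM10}, while $\lambda_\infty=0$ is impossible since the $\lambda_i$ lie in a finite set of positive reals. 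Without this (or the full argument in \cite{CL10}), your termination claim is unjustified.
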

\begin{proof} By \cite[6.8, 7.1]{CL10} and its proof (1) implies (2).
Conversely if the $(K_X+\Delta)$-MMP  over $U$ with scaling of $A$ terminates,
then there exists a rational number $t_0>0$ and
 birational contraction over $U$ say $\phi :X\dasharrow X'$ which is a $K_X+\Delta +tA$ good minimal model over $U$ for all $0\leq t\leq t_0$.
Thus 
$$R(X/U;K_X+\Delta,K_X+\Delta +A):=\bigoplus _{n_1,n_2\in \mathbb N^2}f_*\mathcal O _X(n_1(K_X+\Delta)+n_2(K_X+\Delta +A))$$ 
is finitely generated by \cite[Theorem E]{BCHM10} and \eqref{l-cox} below.
\end{proof}
\begin{lemma}\label{l-cox} Let $f:X\to U$ be a projective morphism and $A$ and $B$ be semi-ample divisors over $U$.
Then the ring $R(X/U; A,B)$ is finitely generated.\end{lemma}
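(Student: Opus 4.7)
The plan is to reduce to the case where $A$ and $B$ pull back from the two factors of a relative product of projective spaces, and then invoke the standard finite generation of multi-graded modules on $\mathbb{P}^{n}_{U}\times_{U}\mathbb{P}^{m}_{U}$.

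Since finite generation of $R(X/U;A,B)$ is equivalent to finite generation of the Veronese subring $R(X/U;NA,NB)$ for any positive integer $N$ (the full ring is a finitely generated module over the Veronese, and for $\mathcal{O}_U$-algebras this implies finite generation of the full ring), we pass to a sufficiently divisible $N$ and assume both $A$ and $B$ are Cartier and base-point-free over $U$. Let $\pi_A\colon X\to\mathbb{P}^{n}_{U}$ and $\pi_B\colon X\to\mathbb{P}^{m}_{U}$ be the morphisms over $U$ defined by the complete linear systems $|A|$ and $|B|$, so that $A=\pi_A^{*}\mathcal{O}(1)$ and $B=\pi_B^{*}\mathcal{O}(1)$. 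Combining them gives a morphism $\phi=(\pi_A,\pi_B)\colon X\to\mathbb{P}:=\mathbb{P}^{n}_{U}\times_{U}\mathbb{P}^{m}_{U}$ over $U$, and setting $g\colon\mathbb{P}\to U$ we have $n_{1}A+n_{2}B=\phi^{*}\mathcal{O}_{\mathbb{P}}(n_{1},n_{2})$.

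Since $\mathcal{F}:=\phi_{*}\mathcal{O}_{X}$ is a coherent sheaf of $\mathcal{O}_{\mathbb{P}}$-algebras, the projection formula gives
$$f_{*}\mathcal{O}_{X}(n_{1}A+n_{2}B)=g_{*}\bigl(\mathcal{F}\otimes\mathcal{O}_{\mathbb{P}}(n_{1},n_{2})\bigr),$$
so $R(X/U;A,B)=\bigoplus_{n_{1},n_{2}\geq 0}g_{*}(\mathcal{F}\otimes\mathcal{O}_{\mathbb{P}}(n_{1},n_{2}))$. Thus the problem reduces to finite generation of this bigraded $\mathcal{O}_{U}$-algebra on $\mathbb{P}^{n}_{U}\times_{U}\mathbb{P}^{m}_{U}$.

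The bigraded ring of sections of the standard twisting sheaves, $\bigoplus g_{*}\mathcal{O}_{\mathbb{P}}(n_{1},n_{2})$, is the polynomial ring $\mathcal{O}_{U}[x_{0},\ldots,x_{n},y_{0},\ldots,y_{m}]$ with the natural $\mathbb{N}^{2}$-grading; in particular it is a finitely generated $\mathcal{O}_{U}$-algebra. Since $\mathcal{O}_{\mathbb{P}}(1,1)$ is $g$-ample and $\mathcal{F}$ is coherent, $\mathcal{F}$ admits (locally on $U$) a surjection from a finite direct sum of line bundles $\mathcal{O}_{\mathbb{P}}(-a_{i},-b_{i})$ by Serre's theorem, from which a standard bigraded Serre vanishing argument shows that $\bigoplus g_{*}(\mathcal{F}(n_{1},n_{2}))$ is a finitely generated module over the bigraded polynomial ring. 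Combining these two facts gives finite generation of $R(X/U;A,B)$ as an $\mathcal{O}_{U}$-algebra.

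The only non-formal step is the multi-graded Serre-type finite generation at the end; this is the classical generalization of the single-graded result in Hartshorne to the product $\mathbb{P}^{n}\times\mathbb{P}^{m}$, proved either by resolving $\mathcal{F}$ by direct sums of $\mathcal{O}(-a,-b)$ and reducing to the case $\mathcal{F}=\mathcal{O}$, or by pushing forward via the Segre embedding and refining the resulting single-graded finite generation using the bigraded structure.
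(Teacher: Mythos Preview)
Your argument is correct. The paper gives no proof at all: its entire content for this lemma is the two words ``Well known.'' So you have supplied far more than the paper does, and there is no alternative approach to compare against.

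One minor remark on structure: the Veronese reduction you invoke at the start (``the full ring is a finitely generated module over the Veronese'') is not automatic for arbitrary bigraded rings, and justifying it in this geometric setting uses essentially the same bigraded Serre-type input you appeal to at the end. It is slightly cleaner to skip that step: choose $N$ with $NA$, $NB$ Cartier and free, form $\phi:X\to\mathbb{P}^n_U\times_U\mathbb{P}^m_U$, and write
\[
R(X/U;A,B)=\bigoplus_{0\le i,j<N}\ \bigoplus_{a,b\ge 0} g_*\bigl(\phi_*\mathcal{O}_X(iA+jB)\otimes\mathcal{O}(a,b)\bigr),
\]
reducing in one stroke to the finite generation of $\bigoplus_{a,b\ge 0} g_*(\mathcal{F}(a,b))$ for an arbitrary coherent $\mathcal{F}$ on $\mathbb{P}^n_U\times_U\mathbb{P}^m_U$. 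For that last claim, if one does not want to cite multigraded regularity, an explicit route is: present $\mathcal{F}$ as a cokernel of a map between direct sums of $\mathcal{O}(-k,-k)$'s to see the module agrees with a finitely generated bigraded $S$-module for $a,b\gg 0$; for each fixed small $a$ the column $\bigoplus_{b\ge 0} g_*\mathcal{F}(a,b)$ equals $\bigoplus_{b\ge 0} (g_m)_*\bigl(p_{2*}(\mathcal{F}\otimes p_1^*\mathcal{O}(a))\bigr)(b)$, which is finitely generated over $\mathcal{O}_U[y_0,\dots,y_m]$ by the ordinary single-graded result on $\mathbb{P}^m_U$, and symmetrically for small $b$.
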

\begin{proof} Well known.\end{proof}

Next, we recall the following result of Kawamata (cf. \cite[Corollary 5]{Kawamata11}). For the reader's convenience, we include a proof.
\begin{corollary}\label{dltmmp1}Let  $f:X\to U$ be a projective morphism and $(X,\Delta )$ a $\Q$-factorial dlt pair with a good minimal model over $U$.
Then any $(K_X+\Delta)$-minimal model program over $U$ with scaling of an ample divisor terminates.
\end{corollary}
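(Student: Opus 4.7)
By Lemma~\ref{dltterm}, the corollary reduces to proving that the bigraded $\mathcal O_U$-algebra
\[
R = R(X/U;\,K_X+\Delta,\,K_X+\Delta+A)
\]
is finitely generated. Let $\phi:X\dasharrow X_M$ be a good minimal model of $(X,\Delta)$ over $U$ and $g:X_M\to Z$ the associated fibration, so $K_{X_M}+\Delta_M\sim_{\mathbb Q,U}g^*H$ with $H$ ample over $U$. By Lemma~\ref{l-gm}(1) the $\phi$-exceptional divisors---which coincide with the prime divisors in $\mathbf B_-(K_X+\Delta/U)$---are the only ones that any $(K_X+\Delta)$-MMP over $U$ can contract, and they are finite in number. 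After carrying out these finitely many divisorial contractions (which does not affect the termination question) we may assume $\phi$ is small; sections of $\mathbb Q$-Cartier divisors then agree on $X$ and $X_M$, reducing the problem to finite generation of $R(X_M/U;\,K_{X_M}+\Delta_M,\,K_{X_M}+\Delta_M+A_M)$, where $A_M=\phi_*A$.

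Because $A$ is $f$-ample and the fibres of $g$ lie inside the fibres of $f_M$, $A_M$ is big over $Z$. After rescaling $A$ within its $\mathbb Q$-ample class---which changes neither the termination question nor finite generation of $R$---the pair $(X_M,\Delta_M+A_M)$ is dlt. Applying Theorem~\ref{t-bchm}(1) relatively over $Z$ to the dlt pair $(X_M,\Delta_M+A_M)$, whose boundary is big over $Z$, yields a good minimal model $h:X_M\dasharrow Y$ over $Z$. Since $K_{X_M}+\Delta_M\sim_{\mathbb Q,Z}0$, every step of $h$ is $(K+\Delta)$-trivial; in particular $Y$ inherits a morphism $g_Y:Y\to Z$ with $K_Y+\Delta_Y\sim_{\mathbb Q,U}g_Y^*H$, so $K_Y+\Delta_Y$ is semi-ample over $U$. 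Invoking Lemma~\ref{l-rel} with $(X_M,\Delta_M)$, the auxiliary boundary $\Delta_M+A_M$, and the semi-ample morphism $g$, the contraction $h$ is then a minimal model of $(X_M,\Delta_M+tA_M)$ over $U$ for every sufficiently small rational $t>0$.

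On $Y$ the divisor $\Delta_Y+tA_Y$ is big over $U$ (as $tA_Y$ is big), $K_Y+\Delta_Y+tA_Y$ is nef over $U$, and $(K_Y+\Delta_Y+tA_Y)-(K_Y+\Delta_Y)=tA_Y$ is big, so the base-point-free theorem for dlt pairs with big boundary shows $K_Y+\Delta_Y+tA_Y$ is semi-ample over $U$. Combined with semi-ampleness of $K_Y+\Delta_Y$, Lemma~\ref{l-cox} gives finite generation of $R(Y/U;\,K_Y+\Delta_Y,\,K_Y+\Delta_Y+tA_Y)$, and a standard rescaling within the two-dimensional $\mathbb R$-span $\langle K_Y+\Delta_Y,\,A_Y\rangle$ transfers this to finite generation of $R(Y/U;\,K_Y+\Delta_Y,\,K_Y+\Delta_Y+A_Y)$. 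Since $h$ is $(K+\Delta)$-trivial and $(K+\Delta+A)$-non-positive, sections of all non-negative $\mathbb N^2$-combinations of $K_{X_M}+\Delta_M$ and $K_{X_M}+\Delta_M+A_M$ agree between $X_M$ and $Y$; together with the earlier reduction from $X$ to $X_M$, this yields finite generation of $R$, and Lemma~\ref{dltterm} completes the proof. The most delicate point in this plan is verifying the $\mathbf B_+$ hypothesis of Theorem~\ref{t-bchm}(1) over $Z$---ensuring that no non-klt stratum of $\lfloor\Delta_M\rfloor$ is contained in $\mathbf B_+((\Delta_M+A_M)/Z)$---which forces one to choose $A$ in general position within its ample class.
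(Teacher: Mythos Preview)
Your overall strategy---reducing via Lemma~\ref{dltterm} to finite generation of the bigraded ring, passing to the semi-ample fibration $Z$, applying Theorem~\ref{t-bchm} over $Z$, and then invoking Lemmas~\ref{l-rel} and~\ref{l-cox}---is the same as the paper's. However, the execution has a genuine gap at the very first reduction, and a second one that you yourself flag but do not resolve.

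\textbf{The reduction to $\phi$ small is not justified.} You write that after ``carrying out these finitely many divisorial contractions'' one may assume $\phi:X\dasharrow X_M$ is small, and then identify the bigraded ring on $X$ with that on $X_M$. But there is no mechanism available for performing those contractions. You cannot run an MMP on $X$ over $X_M$, since there is no morphism $X\to X_M$; and if instead you mean the first several steps of the given MMP with scaling of $A$, then (i) there is no guarantee that all divisors in $\mathbf B_-(K_X+\Delta/U)$ are contracted after finitely many steps unless you already know the MMP terminates, and (ii) even granting this, the resulting model $X_k$ carries only the strict transform $A_k$, which is \emph{not} ample, so Lemma~\ref{dltterm} no longer applies on $X_k$. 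Moreover, the steps $X\dasharrow X_k$ are $(K_X+\Delta+t_iA)$-trivial for the scaling values $t_i$, hence typically $(K_X+\Delta+A)$-\emph{positive}, so sections of $n_1(K_X+\Delta)+n_2(K_X+\Delta+A)$ do not transfer from $X$ to $X_k$. The paper avoids this entirely by taking a common resolution $\mu:X'\to X$, $q:X'\to X_M$ and running the $(K_{X'}+\Delta')$-MMP \emph{over} $X_M$; this produces a model $X'_M\to X_M$ on which $K_{X'_M}+\Delta'_M$ is the pullback of $K_{X_M}+\Delta_M$, and the passage $X\leadsto X'$ preserves the bigraded ring because $K_{X'}+\Delta'=\mu^*(K_X+\Delta)+F$ with $F$ effective and $\mu$-exceptional.

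\textbf{The dlt and $\mathbf B_+$ hypotheses on $X_M$ are not available.} Even granting $\phi$ small, $A_M=\phi_*A$ is not ample on $X_M$, so choosing $A$ general in its class on $X$ does not make $A_M$ avoid the non-klt centers of $(X_M,\Delta_M)$, nor does it force $(X_M,\Delta_M+A_M)$ to be dlt or the strata of $\lfloor\Delta_M\rfloor$ to lie outside $\mathbf B_+((\Delta_M+A_M)/Z)$. You acknowledge this as ``the most delicate point'' but offer no argument. The paper sidesteps the issue by working with a \emph{klt} perturbation: one chooses $\Delta_1\sim_{\Q,U}\Delta+A$ with $(X,\Delta_1)$ klt, lifts $\Delta_t=(1-t)\Delta+t\Delta_1$ to $X'$, and checks that on $X'_M$ the pushforward $\phi'_*\Delta'_t$ is klt (using that $\phi'$ is an isomorphism near every stratum of $\lfloor\phi'_*\Delta'\rfloor$). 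For a klt pair with big boundary over $Y={\rm Proj}\,R$, Theorem~\ref{t-bchm} applies without any $\mathbf B_+$ verification.

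In short, your plan is morally the paper's, but the shortcut of working directly on $X_M$ rather than on a resolution dominating both $X$ and $X_M$, together with the use of $A_M$ rather than a klt perturbation, leaves both reductions unsupported.
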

\begin{proof}
Let $\phi:X\dasharrow X_M$ be the good minimal model of $(X,\Delta)$ over $U$ and $X'$ a birational model of $X$ with two proper birational morphisms $\mu:X'\to X$ and $q:X'\to X_M$. 
We have $\mu ^* (K_X+\Delta)=q^*(K_{X_M}+\Delta _M)+E$ where $\Delta _M=\phi _* \Delta$ and $E$ is effective, $q$-exceptional and $\mu_*E$ is supported on
the $\phi$-exceptional divisors.
Write $\mu^*(K_{X}+\Delta)+F=K_{X'}+\Delta'$, where $F$ and $\Delta'$ are effective and have no common components. 
Then $K_{X'}+\Delta '=q^*(K_{X_M}+\Delta_M)+E+F$.
Since $E+F$ is $q$-exceptional, one of its components is contained in ${\bf B}_{-}(K_{X'}+\Delta'/X_M)$. Running a $(K_{X'}+\Delta')$-MMP over $X_M$ with scaling of an ample divisor, we contract the divisorial components of 
${\bf B}_{-}(K_{X'}+\Delta'/X_M)$ and hence some component of the support of $E+F$. We obtain a rational map 
$\phi':X'\dasharrow X'_M$ such that $\phi '_*(F+E)=0$ and hence $\mu_M^*(K_{X_M}+\Delta_M)=K_{X'_M}+\Delta_M'$ where $\Delta_M'=\phi '_* \Delta '$ and $\mu _M:X'_M\to X_M$ is the induced morphism.
Thus $\phi'$ is a good minimal model
for $(X',\Delta')$ over $X_M$. 
In particular, $\phi '$ is also a good minimal model of $(X',\Delta')$ over $U$.

Let $X'_M\to Y={\rm Proj}R(X'/U;K_{X'}+\Delta ')$ be the induced morphism.
Let $A$ be a general very ample $\Q$-divisor on $X$, $A'=\mu^*A$  and $\Delta _1\sim _{\Q,U}\Delta +A$ be $\Q$-divisor such that $(X,\Delta _1)$ is klt and $\Delta _1\geq \epsilon A$ for some $0<\epsilon \ll 1$. 
Let $\Delta _t=(1-t)\Delta +t\Delta _1$ then $(X,\Delta _t)$ is klt for $0<t\leq 1$. We write 
$$K_{X'}+\Delta '_t-F_t=\mu ^*(K_X+\Delta _t),$$ where $\Delta' _t$ and $F_t$ are effective with no common component. There exists a rational number $0<t_0 \ll1$, such that $ {\rm Supp} (\Delta')\subset {\rm Supp}(\Delta'_{t_0})$ and $ {\rm Supp} (F)\subset {\rm Supp}(F_{t_0})$. For $0\le t \le t_0$, we have $\Delta'_t=(1-\frac{t}{t_0})\Delta'+\frac{t}{t_0}\Delta'_{t_0}$.

Note that $(X'_M,\Delta '_M=\phi '_*\Delta ')$ is dlt and $\phi '$ is an isomorphism at the general point of any strata of $\lfloor \phi '_* \Delta '\rfloor$ (cf. \cite[3.10.11]{BCHM10}) and hence we can assume that for any $0<t\le t_0\ll 1$ we have
$$(X'_M,\phi' _* \Delta' _t)\mbox{ is klt }\ \qquad \mbox{and} \qquad (X'_M,\phi '_*(\Delta '+tA'))\mbox{ is dlt}.$$ Note that $\phi '_*\Delta '_{t}$ is big over $U$ and hence over $Y$ for any $t>0$.
Let $X'_M\dasharrow X''_M$ be a good minimal model  of $K_{X'_M}+\phi '_*\Delta '_{t}$ over $Y$ (cf. \eqref{t-bchm}). 
By \eqref{l-rel}, $X'_M\dasharrow X''_M$ is a good minimal model of 
$K_{X'_M}+\phi '_*\Delta '_{\alpha }$ over $U$ for any $0<\alpha \ll t_0$. Applying \eqref{l-cox} to $X''_M$, we conclude
that $R(X'_M/U; K_{X'_M}+\phi '_*\Delta',  K_{X'_M}+\phi '_* \Delta '_\alpha )$ is finitely generated for $0<\alpha \ll t_0$.
Since any $(K_{X'}+\Delta ')$-flip or divisorial contraction is also a 
$(K_{X'}+\Delta'_\alpha) $-flip or divisorial contraction for any $0\leq \alpha \ll t_0$, we can assume that $\phi'$ is also $(K_{X'}+\Delta'_\alpha)$-non-positive and
it follows that $R(X'/U; K_{X'}+\Delta',  K_{X'}+\Delta'_\alpha )$ is finitely generated for $0<\alpha\ll t_0$.
Since $$K_{X'}+\Delta'_t \sim _{\Q , U}f^*(K_X+\Delta +t A)+ F_{t}\qquad \forall \ 0\leq t \leq t_0$$ and $F_t$ is $\mu$-exceptional, $R(X/U; K_{X}+\Delta,  K_{X}+\Delta+\alpha A)$ is finitely generated for $0<\alpha \ll t_0$.
By \eqref{dltterm}, the $(K_X+\Delta)$-MMP over $U$ with scaling of $A$ terminates and this concludes the proof.

\end{proof}
\begin{lemma}\label{l-birmm} Let $f:X\to U$ be a projective morphism, $(X,\Delta )$ a dlt pair and $\mu :X'\to X$  a proper birational morphism.
We write $K_{X'}+\Delta '=\mu ^*(K_X+\Delta )+F$ where $\Delta '$ and $F$ are effective with no common components.

Then $(X,\Delta)$ has a good minimal model over $U$ if and only if  
$(X',\Delta')$ has a good minimal model over $U$.
\end{lemma}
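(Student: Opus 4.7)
The key identity is that, since $F\geq 0$ is $\mu$-exceptional (its support consists exactly of those $\mu$-exceptional prime divisors $D$ on $X'$ with $a(D;X,\Delta)>0$, as one sees by comparing multiplicities in the equation $\Delta'-F=\mu^*(K_X+\Delta)-K_{X'}$: a non-$\mu$-exceptional prime divisor contributes only to $\Delta'$, and a $\mu$-exceptional prime divisor contributes to $F$ exactly when $a(D;X,\Delta)>0$, else to $\Delta'$ with coefficient $-a(D;X,\Delta)\in[0,1]$), the projection formula yields
$$\mu_*\mathcal{O}_{X'}\bigl(m(K_{X'}+\Delta')\bigr) = \mathcal{O}_X\bigl(m(K_X+\Delta)\bigr)$$
for all $m\geq 0$ with $m(K_X+\Delta)$ Cartier. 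Fixing a sufficiently general ample $\Q$-divisor $A$ on $X$ and setting $A':=\mu^*A$, the same identity extends to all bigraded pieces, giving an isomorphism of $\mathcal{O}_U$-algebras
$$R\bigl(X'/U;\,K_{X'}+\Delta',\,K_{X'}+\Delta'+\epsilon A'\bigr)\;\cong\;R\bigl(X/U;\,K_X+\Delta,\,K_X+\Delta+\epsilon A\bigr).$$

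The strategy is to characterize existence of a good minimal model in terms of finite generation of this adjoint ring on each side, and then transfer through the isomorphism. For a $\Q$-factorial dlt pair $(Y,\Gamma)$ projective over $U$, existence of a good minimal model over $U$ is equivalent to finite generation of $R(Y/U;\,K_Y+\Gamma,\,K_Y+\Gamma+\epsilon H)$ for a general ample $H$ and $0<\epsilon\ll 1$. The implication finite generation $\Rightarrow$ good minimal model is exactly Lemma \ref{dltterm}(1)$\Rightarrow$(2). The reverse implication is the content of the proof of Corollary \ref{dltmmp1}: starting from a good minimal model, a big klt perturbation admits a good minimal model by Theorem \ref{t-bchm}, and combining its semi-ample output with the original good minimal model via Lemma \ref{l-cox} produces finite generation of the adjoint ring. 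Applying this characterization on both sides and invoking the adjoint-ring identity above establishes both directions of the lemma simultaneously.

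The main obstacle is the management of $\Q$-factoriality and log canonicity hypotheses needed to legitimately invoke Lemma \ref{dltterm} and the proof of Corollary \ref{dltmmp1}. The $\Q$-factoriality issue is routine: one replaces $X$ and $X'$ by small $\Q$-factorial dlt modifications, which preserves the property of admitting a good minimal model by a simpler instance of the lemma applied to the dlt modification itself. The log canonicity issue is more delicate, since when $\mu$ is not a log resolution the pair $(X',\Delta')$ need not be log canonical. The fix is to factor $\mu$ through a common log resolution of $(X,\Delta)$ and observe that the statement is transitive under composition of birational morphisms, reducing to the case where $\mu$ is a log resolution; in that case $(X',\Delta')$ is automatically log smooth, hence dlt, and the argument above applies directly.
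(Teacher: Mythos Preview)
Your approach---transfer finite generation of the bigraded adjoint ring through the projection-formula isomorphism---is exactly how the paper handles one of the two directions, but there is a genuine asymmetry that breaks the other.

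The isomorphism
\[
R\bigl(X'/U;\,K_{X'}+\Delta',\,K_{X'}+\Delta'+\epsilon A'\bigr)\;\cong\;R\bigl(X/U;\,K_X+\Delta,\,K_X+\Delta+\epsilon A\bigr)
\]
holds with $A'=\mu^*A$, which is nef and big on $X'$ but \emph{not} ample. The characterization you invoke (good minimal model $\Leftrightarrow$ bigraded ring finitely generated) rests on Lemma~\ref{dltterm}, and that lemma is stated and proved for an \emph{ample} scaling divisor: ampleness is what allows one to initialize and run the MMP with scaling in the implication $(1)\Rightarrow(2)$. So from finite generation of $R(X'/U;K_{X'}+\Delta',K_{X'}+\Delta'+\epsilon\mu^*A)$ you cannot directly conclude that $(X',\Delta')$ has a good minimal model. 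This is precisely the direction ``$(X,\Delta)$ good $\Rightarrow$ $(X',\Delta')$ good''. The paper treats it by an entirely different construction: using \cite[1.4.3]{BCHM10} it extracts over the good model $X_M$ exactly those $\mu$-exceptional divisors $E$ with $a(E;X,\Delta)\le 0$ and center outside ${\bf B}(K_X+\Delta/U)$, obtaining $\mu_M:X'_M\to X_M$, and then verifies by a direct discrepancy computation that $X'\dasharrow X'_M$ is a good minimal model for $(X',\Delta')$. For the reverse direction the paper does argue essentially as you propose (this is the ``second paragraph of the proof of \ref{dltmmp1}''), applying Lemma~\ref{dltterm} only on the $X$-side, where $A$ is genuinely ample.

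Your reduction to the log-resolution case also needs repair. Transitivity would require applying the lemma to a resolution $\rho:W\to X'$, but the hypothesis of the lemma is that the \emph{target} pair is dlt, which is exactly what you have not established for $(X',\Delta')$. Moreover the boundary $\Delta_W'$ on $W$ induced from $(X',\Delta')$ does not in general agree with the boundary $\Delta_W$ induced from $(X,\Delta)$: at $\rho$-exceptional divisors whose center lies in ${\rm Supp}(F)$ one has $\Delta_W'>\Delta_W$, so the two instances of the lemma you would chain together are not comparing the same pair on $W$.
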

\begin{proof} Suppose that $(X,\Delta )$ has a good minimal model over $U$ say $\phi :X\dasharrow X_M$, then $(X_M,\Delta _M=\phi _* \Delta )$ is dlt.
By \eqref{dltmmp1}, we may assume that $\phi$ is the output of a minimal model program with scaling.
Let $\mathcal E$ be the union of the $\mu$-exceptional divisors $E\subset X'$ such that $a(E;X,\Delta )\leq 0$ and whose center $V$ is not contained in ${\bf B}(K_X+\Delta /U)$. In particular no component of $F$ is contained in $\mathcal E$.
Since $\phi$ is an isomorphism at the generic point of $V$, by \cite[1.4.3]{BCHM10}, there exists a proper birational morphism of normal varieties $\mu _M:X'_M\to X_M$ 
whose exceptional divisors correspond to the divisors in $\mathcal E$.
Thus, $\phi ':X'\dasharrow X'_M$ is a birational contraction such that $\phi '_*(F)=0$ and hence
$\phi '_*(K_{X'}+\Delta ')=\mu _M^*(K_{X_M}+\Delta _M)$.
Let $p:W\to X'$ and $q:W\to X'_M$ be a common resolution, then $$(\mu\circ p) ^*(K_X+\Delta )=(\mu _M\circ q)^*(K_{X_M}+\Delta _M)+G=q^*\phi '_*(K_{X'}+\Delta ')+G$$ where $G$ is effective, $(\mu _M\circ q)$-exceptional and the support of $(\mu \circ p)_*G$ 
is the set of $\phi$-exceptional divisors.
But then 
$$p^*(K_{X'}+\Delta ')=p^*(\mu ^*(K_X+\Delta )+F)=q^*\phi '_*(K_{X'}+\Delta ')+p^*F +G.$$ Since $(\mu \circ p )(G)\subset {\mathbf B}(K_X+\Delta /U)$,  the support of $G$ does not contain any center on $W$ of any divisor in $\mathcal E$. 
It follows that $p^*F+G$ is $q$-exceptional and that 
$F+p_*G$ is supported precisely on the set of $\phi '$-exceptional divisors. In fact, the inclusion $\subset$ is immediate and the inclusion $\supset$ follows since any $\phi '$-exceptional divisor $E$ with $\mu (E)\subset {\mathbf B}(K_X+\Delta /U)$ satisfies $a(E;X,\Delta )<a(E;X_M,\Delta _M)$ and hence is contained in $p_*G$ whilst any $\phi '$-exceptional divisor $E$ with $\mu (E)\not \subset {\mathbf B}(K_X+\Delta /U)$ is contained in ${\rm Supp }(F)$ by the definition of $X'_M$. Thus $\phi'$ is a good minimal model for $K_{X'}+\Delta '$ over $U$.

We now assume that $(X',\Delta ')$ has a good minimal model over $U$ say $\phi ':X'\dasharrow X'_M$.
By \eqref{dltmmp1}, we may assume that $\phi'$ is given by a sequence of $(K_{X'}+\Delta ')$-flips and divisorial contractions over $U$.
Then the rest of the proof is exactly the same as the second paragraph of the proof of \eqref{dltmmp1}.
\end{proof}

\begin{lemma}\label{l-11} Let $f\colon X\to U$ be a projective morphism, $(X,\Delta)$ a dlt pair and $\phi :X\dasharrow X_M$ a good minimal model for $K_X+\Delta$ over $U$. If $\psi :X\dasharrow Y={\rm Proj} R(X/U,K_X+\Delta )$ is a morphism, then there is a good minimal model for $K_X+\Delta$ over $Y$.
\end{lemma}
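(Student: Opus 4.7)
The plan is to observe that the given good minimal model $\phi:X\dashrightarrow X_M$ over $U$ already serves as a good minimal model for $K_X+\Delta$ over $Y$, once we equip $X_M$ with the natural morphism to $Y$. No new MMP needs to be run.

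First I would produce the morphism $\pi:X_M\to Y$. Since $\phi$ is a good minimal model, $(X_M,\Delta_M=\phi_*\Delta)$ is a $\mathbb{Q}$-factorial dlt pair with $K_{X_M}+\Delta_M$ semi-ample over $U$. The fact that $\phi$ is $(K_X+\Delta)$-non-positive identifies the section rings $R(X/U,K_X+\Delta)=R(X_M/U,K_{X_M}+\Delta_M)$, so $Y$ coincides with the base of the Iitaka fibration of $(X_M,\Delta_M)$ over $U$ and we obtain the morphism $\pi:X_M\to Y$ together with an ample $\mathbb{Q}$-divisor $H$ on $Y$ satisfying $K_{X_M}+\Delta_M\sim_{\mathbb{Q},U}\pi^*H$. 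Since $H$ pulls back from $Y$, this gives $K_{X_M}+\Delta_M\sim_{\mathbb{Q},Y}0$, which is trivially nef and semi-ample over $Y$. Moreover $\pi\circ\phi$ and $\psi$ are both determined by the common section ring, so they agree as rational maps; thus $\phi$ is a rational contraction over $Y$.

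All the remaining conditions in the definition of a good minimal model are base-independent and already verified: $X_M$ is $\mathbb{Q}$-factorial, $\phi$ is a birational contraction, and $a(E;X,\Delta)>a(E;X_M,\Delta_M)$ for every $\phi$-exceptional prime divisor $E$. Finally, both $X$ and $X_M$ are projective over $Y$, since they are projective over $U$ and the structure morphism $Y\to U$ is projective (hence separated). Therefore $\phi:X\dashrightarrow X_M$ is a good minimal model for $K_X+\Delta$ over $Y$.

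The only point requiring any care is the compatibility of $\phi$ with the projections to $Y$, but this is immediate from the identification of section rings. There is no real obstacle: the content of the lemma is the observation that semi-ampleness of $K_{X_M}+\Delta_M$ over $U$ becomes relative $\mathbb{Q}$-triviality over $Y$ by the very construction of $Y$.
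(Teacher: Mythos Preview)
Your argument is correct and in fact more direct than the paper's. You verify that the given good minimal model $X_M$ over $U$, equipped with its natural morphism $\pi:X_M\to Y$, already satisfies every condition in the definition of a good minimal model over $Y$: the discrepancy inequalities, $\Q$-factoriality, and dlt-ness of $(X_M,\Delta_M)$ are base-independent; $K_{X_M}+\Delta_M\sim_{\Q,Y}0$ gives nefness and semi-ampleness over $Y$; projectivity of $X\to Y$ and $X_M\to Y$ follows from cancellation since $Y\to U$ is separated; and compatibility of $\phi$ with the two projections to $Y$ follows from the identification of section rings.

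The paper instead passes to a higher birational model $\mu:X'\to X$ resolving the rational map $X\dasharrow X_M$, so that $\nu:X'\to X_M$ is a morphism, and then runs a $(K_{X'}+\Delta')$-MMP with scaling over $X_M$ to contract the $\nu$-exceptional divisor $E+F$, arriving at a model $X'_M$ with $K_{X'_M}+\phi'_*\Delta'=\mu_M^*(K_{X_M}+\Delta_M)$. This produces a good minimal model of $(X',\Delta')$ over $Y$ and one appeals to Lemma~\ref{l-birmm} to conclude for $(X,\Delta)$. The end result is a model crepant to $X_M$, so the extra work buys nothing for the bare existence statement; your observation that $X_M$ itself already works is the cleaner route. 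For the applications in the paper (Theorems~\ref{klt} and~\ref{c}), only existence of a good minimal model over $Y$ is used, so your shortcut suffices.
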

\begin{proof} Let $\mu :X'\to X$ and $\Delta '$ be defined as in \eqref{l-birmm}, then it suffices to show that $K_{X'}+\Delta '$ has a good minimal model over $Y$. 
Thus, we may assume that $\nu:X'\dasharrow X_M$ is a morphism.
Recall that $K_{X'}+\Delta '=\mu ^*(X_X+\Delta)+E$ where $E$ is effective and $\mu$-exceptional and note that $\mu ^*(K_X+\Delta )=\nu ^*(K_{X_M}+\Delta _M)+F$ where $F$ is effective, $\nu$-exceptional and $\mu _* F$ is supported on the $\phi$-exceptional divisors. 
Thus $K_{X'}+\Delta '=\nu ^*(K_{X_M}+\Delta _M)+F+E$ where $E+F$ is $\nu$-exceptional. If  $E+F\ne 0$, then some component of its support is contained in ${\mathbf B}_-(K_{X'}+\Delta '/X_M)$. It is then easy to see that after finitely many steps of the $(K_{X'}+\Delta ')$-minimal model program with scaling over $X_M$ we obtain $\phi ':X'\dasharrow X'_M$, where   ${\bf B}_{-}(K_{X'_M}+\phi '_*\Delta '/X_M)=0$ and hence $\phi '_* (F+E)=0$. If $\mu _m:X'_M\to X_M$ is the induced morphism, then $ K_{X'_M}+\phi '_*\Delta '=\mu _m^*(K_{X_M}+\phi _*\Delta )$ is semiample over $U$ and hence over $Y$.
Thus $\phi'$ is the required $K_{X'}+\Delta '$ has a good minimal model over $Y$.
\end{proof}

The following result is a generalization of \cite[4.4]{Lai09}.
The proof follows ideas of Shokurov (cf. \cite[3.7]{Birkar11}; see also \cite{Gongyo11}).
\begin{thm}\label{klt}
 Let $f\colon X\to U$ be a surjective projective morphism and $(X,\Delta)$ a dlt pair 
such that \begin{enumerate}
\item for a very general point $u\in U$, the fiber $(X_{u},\Delta_{u}=\Delta |_{X_u})$ has a good minimal model, and
\item the ring
$R(X/U;K_X+\Delta)$ is finitely generated.\end{enumerate}
Then $(X,\Delta)$ has a good minimal model over $U$.
\end{thm}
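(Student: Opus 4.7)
The plan is to use the canonical bundle formula (Theorem~\ref{t-can}) to reduce the problem to a log general type question on the base of the relative Iitaka fibration, where hypothesis~(2) gives bigness and hypothesis~(1) translates into a fiber condition amenable to a Lai-type argument; the result is then lifted back to $X$. Apply Theorem~\ref{t-can} to $(X,\Delta)\to U$ to obtain a birational modification $\mu:X'\to X$ together with an equidimensional toroidal fibration $h:X'\to Y$ over $U$, and divisors $\Delta',B,J,R$ satisfying properties~(1)--(4). By Lemma~\ref{l-birmm} I may replace $(X,\Delta)$ by $(X',\Delta')$, so assume henceforth that $h:X\to Y$ with $K_X+\Delta\sim_{\mathbb{Q},U}h^*(K_Y+B+J)+R$, $R\geq 0$, $(Y,B)$ log smooth and $K_Y+B+J$ big over $U$.

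Property~(3) of Theorem~\ref{t-can} combined with hypothesis~(2) implies that $R(Y/U;K_Y+B+J)$ is finitely generated. Using the $b$-semi-ampleness of the moduli part $J$ (after a base change, justified by Ambro--Fujino--Kawamata theory), pick an effective $J'\sim_{\mathbb{Q}}J$ in general position so that $(Y,B+J')$ is dlt with $\lfloor B+J'\rfloor=\lfloor B\rfloor$ and $K_Y+B+J'$ big over $U$. Hypothesis~(1) now transfers to $Y$: for very general $u\in U$, the good minimal model of $(X_u,\Delta_u)$ descends via the fiberwise Iitaka fibration $h_u:X_u\to Y_u$ (together with the canonical bundle formula) to give a good minimal model of $(Y_u,B_u+J'_u)$; in particular no stratum of $\lfloor B+J'\rfloor$ is contained in ${\bf B}_+(K_Y+B+J'/U)$. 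Theorem~\ref{t-bchm}(2) then produces a good minimal model $\phi:Y\dashrightarrow Y_M$ over $U$, with corresponding semiample morphism $\psi:Y_M\to Z$ over $U$.

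Lift back to $X$ as follows. After resolving indeterminacy, the composite $X\dashrightarrow Z$ becomes a morphism, and $K_X+\Delta$ pulls back to $\psi^*H$ (with $H$ ample on $Z$) plus an effective sum involving $R$ and the pullback of the $\phi$-exceptional divisors. Running the $(K_X+\Delta)$-MMP over $Z$ with scaling of an ample divisor should contract exactly this effective sum; termination uses the fiber hypothesis~(1), since a very general fiber of $X\to Z$ is birational to a very general fiber of the Iitaka fibration of $(X_u,\Delta_u)$ for very general $u\in U$, on which $K+\Delta\sim_{\mathbb{Q}}0$ and which admits a good minimal model by~(1). The resulting terminal model is a good minimal model of $(X,\Delta)$ over $Z$, and hence over $U$ via Lemma~\ref{l-11} together with the compatibility of semiample structures under the map $Z\to U$.

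The main obstacle is this last lifting step, in particular deducing termination of the relative MMP over $Z$ from a fiberwise hypothesis stated over $U$. This requires a spreading-out argument combined with special termination along the strata of $\lfloor\Delta\rfloor$, in the spirit of Shokurov (cf.\ \cite[3.7]{Birkar11}, \cite{Gongyo11}), and may also require invoking lower-dimensional instances of the theorem itself.
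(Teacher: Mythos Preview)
Your route via the canonical bundle formula is unnecessarily indirect and has real gaps; the paper's argument is both simpler and avoids the issues you flag.

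First, the gaps. You invoke ``$b$-semi-ampleness of the moduli part $J$'' to replace $J$ by an effective divisor $J'$ keeping $(Y,B+J')$ dlt. This is not available in the generality needed here: for dlt (as opposed to klt) pairs on the fibers, semi-ampleness of the moduli part is an open problem, and the paper only ever uses that $J$ is nef over $U$. Second, to apply Theorem~\ref{t-bchm}(2) on $Y$ you need that no stratum of $\lfloor B+J'\rfloor$ lies in ${\bf B}_+(K_Y+B+J'/U)$. Your justification (``the good minimal model of $(X_u,\Delta_u)$ descends'') does not give this; the components of $\lfloor B\rfloor$ arise from vertical components of $\lfloor\Delta\rfloor$ (cf.\ Theorem~\ref{t-can}(4)), and there is no reason for their images to avoid ${\bf B}_+$. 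Third, you yourself identify the lifting step back to $X$ as the ``main obstacle,'' and correctly note it would need special termination and possibly induction on dimension --- but this is precisely the hard work you were trying to outsource.

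The paper bypasses all of this by exploiting hypothesis~(2) far more directly. Since $R(X/U;K_X+\Delta)$ is finitely generated, the variety $Y:={\rm Proj}\,R(X/U;K_X+\Delta)$ already exists, and after a birational modification (Lemma~\ref{l-birmm}) one has a \emph{morphism} $\phi:X\to Y$. One then runs the $(K_X+\Delta)$-MMP over $Y$ with scaling. Hypothesis~(1) plus Lemma~\ref{l-11} ensure that on the very general fiber this MMP reaches a good model after finitely many steps; one may also run it until ${\bf B}_-(K_{X'}+\Delta'/Y)$ contains no divisor. Writing $K_X+\Delta\sim_{\mathbb Q,U}G=G^h+G^v$ relative to $\phi$, the horizontal part $\eta_*G^h$ vanishes because the very general fiber over $Y$ now has Kodaira dimension zero, and then Lai's ``insufficient fiber type'' argument \cite[4.4]{Lai09} forces $\eta_*G^v=0$ as well. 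Hence $K_{X'}+\Delta'$ is the pullback of an ample divisor on $Y$, and $X'$ is the desired good model.

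In short: the key observation you are missing is that hypothesis~(2) already hands you the target $Y$, so there is no need for the canonical bundle formula or for descending the problem to the base; one works directly with the map to ${\rm Proj}\,R$ and applies Lai's argument there. The canonical bundle formula does play a role in the paper, but only later (in Section~5), where the situation is genuinely not of log general type and an intermediate base is unavoidable.
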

\begin{proof} We may assume that $f$ has connected fibers and that $U$ is affine. 
By \eqref{l-birmm}, we may assume that $(X,\Delta )$ is log smooth and that there is a morphism $\phi :X\to Y:={\rm Proj}R(X/U,K_X+\Delta )$.
We follow the strategy of \cite{Lai09}. 

Write $K_X+\Delta \sim _{\Q,U} G=G^h + G^v$, where $G \ge 0$ and $G^v$ (resp. $G^h$) is the vertical (resp. horizontal) part with respect to $\phi$. 
Note that $R(X_u, K_{X_u}+\Delta _u)$ is finitely generated, $Y_u\cong {\rm Proj}R(X_u, K_{X_u}+\Delta _u)$ and $X_u\to Y_u$ is a morphism.
By \eqref{l-11}, after running a $(K_{X}+\Delta)$-minimal model program over $Y$ with scaling of an ample divisor say $\eta :X\dasharrow X'$, we may assume that the very
general fiber of $\phi':X'\to Y$ is a good minimal model for $K_{X_u}+\Delta _u$ (cf. \eqref{l-rel}). 
Moreover, we may assume that ${\bf B}_{-}(K_{X'}+\Delta '/Y)$ does not contain any divisorial component where $\Delta '$ is the strict transform of $\Delta$. 
As the very general fiber $X'_{y}$ of $\phi'$ has Kodaira dimension zero, we have $(\eta _*G)|_{X'_{y}}  \sim_{\mathbb{Q}} 0$ and hence ${\eta _*G}^h = 0$. In particular, we may assume $\eta _* G$ is $\phi'$-vertical. 

Following the arguments in the fourth paragraph of the proof of \cite[4.4]{Lai09} one sees that, for any codimension 1 point $P\in Y$, ${\rm Supp}( \eta _*G^v)$ does not contain  ${\rm Supp}({\phi'}^{-1} P)$. In particular, $\eta _*G^v$ is of insufficient fiber type over $Y$ (see \cite[2.9]{Lai09}). This implies that some component of $\eta _*G^{v}$ is contained in $ {\bf B}_-(K_{X'} + \Delta'/Y )$. Thus $\eta _*G^v=0$ by our assumption of $X'$. Therefore, $\eta$ is a good minimal model for $K_X+\Delta$ over $Y$ so that $K_{X'}+\Delta '\sim _{\Q,U}{\phi'} ^*M$ where $M$ is an ample divisor on $Y$ over $U$ and $\phi' :X'\to Y$ is the induced morphism. Thus $\eta: X\dasharrow X'$ is a good minimal model for $K_X+\Delta$ over $U$.
\end{proof}

\begin{corollary}\label{c-mmp}
Let $h:Z\to Y$ be an algebraic fiber space projective over $U$, $(Y,\Theta)$  a klt pair and $(Z,\Delta_Z)$ a dlt pair such that $h^*(K_Y+\Theta)\sim_{\mathbb{Q},U}K_Z+\Delta_Z.$ Let $\eta: Y\dasharrow Y'$ be  a $(K_Y+\Theta)$-flip or  divisorial contraction over $U$. Then there exists a $(K_Z+\Delta_Z)$-negative birational contraction $\mu:Z\dasharrow Z'$ to a normal variety $Z'$ with a projective morphism $h':Z'\to Y'$ such that  $K_{Z'}+\Delta_{Z'}\sim _{\mathbb{Q},U}h'^*(K_{Y'}+\eta_*\Theta)$ where $\Delta_{Z'}=\mu_*\Delta_Z$. 

Furthermore, if $E$ is a $\mathbb{Q}$-Cartier $\mathbb{Q}$-divisor on $Y$, then $\mu_*h^*E\sim_{\mathbb{Q},U} h'^*\eta_*E.$ 
\end{corollary}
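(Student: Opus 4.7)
To construct $Z'$ I would resolve the birational map $\eta$ via a common resolution, pull the problem back to a log resolution of $Z$, and run a relative minimal model program over $Y'$.

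Specifically, choose a common projective birational resolution $g\colon W\to Y$, $g'\colon W\to Y'$ of $\eta$, and take a $\mathbb{Q}$-factorial log resolution $p\colon\tilde Z\to Z$ of $(Z,\Delta_Z)$ factoring through $Z\times_Y W$, with $\tilde g\colon\tilde Z\to W$ and $\tilde h:=g'\circ\tilde g\colon\tilde Z\to Y'$. The Negativity Lemma applied to the $(K_Y+\Theta)$-flip or divisorial contraction $\eta$ yields
\[
g^*(K_Y+\Theta)=g'^*(K_{Y'}+\eta_*\Theta)+F_0
\]
with $F_0\geq 0$ effective and $g'$-exceptional, whose support equals the set of $g'$-exceptional divisors lying over the non-isomorphism locus of $\eta$. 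Writing $K_{\tilde Z}+\tilde\Delta=p^*(K_Z+\Delta_Z)+\tilde E$ with $\tilde\Delta,\tilde E\geq 0$ having no common components and $\tilde E$ being $p$-exceptional, and using $K_Z+\Delta_Z\sim_{\mathbb{Q},U}h^*(K_Y+\Theta)$, I get
\[
K_{\tilde Z}+\tilde\Delta\sim_{\mathbb{Q},U}\tilde h^*(K_{Y'}+\eta_*\Theta)+G,\qquad G:=\tilde g^*F_0+\tilde E,
\]
with $G\geq 0$ effective and $\tilde h$-exceptional.

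Next I would run a $(K_{\tilde Z}+\tilde\Delta)$-MMP over $Y'$ with scaling of an ample divisor. Since $K_{\tilde Z}+\tilde\Delta\sim_{\mathbb{Q},Y'}G$ with $G$ effective and $\tilde h$-exceptional, by \eqref{l-gm} the divisorial part of $\mathbf{B}(K_{\tilde Z}+\tilde\Delta/Y')$ equals $\mathrm{Supp}(G)$, and the MMP should terminate in a good minimal model $\phi'\colon\tilde Z\dashrightarrow Z'$ over $Y'$ with $K_{Z'}+\Delta_{Z'}\sim_{\mathbb{Q},Y'}0$. To obtain termination, I would introduce a small generic ample perturbation $\tilde\Delta+\epsilon A$, rendering the pair klt with big boundary over $Y'$, apply BCHM \eqref{t-bchm} to produce a good minimal model for the perturbed pair, and descend to the unperturbed case via a scaling-limit argument in the spirit of \eqref{l-rel}. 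Setting $\mu:=\phi'\circ p^{-1}\colon Z\dashrightarrow Z'$ and $h':=\tilde h\circ(\phi')^{-1}\colon Z'\to Y'$ then gives the desired birational contraction and morphism with $K_{Z'}+\Delta_{Z'}\sim_{\mathbb{Q},U}h'^*(K_{Y'}+\eta_*\Theta)$; the $(K_Z+\Delta_Z)$-negativity of $\mu$ follows from the Negativity Lemma applied on a common resolution of $Z$ and $Z'$.

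For the ``Furthermore'' assertion, by \eqref{l-proj} applied with $D=h^*E$ it suffices to verify $\mu_*h^*E\sim_{\mathbb{Q},Y'}0$. From the common-resolution identity
\[
p^*h^*E=\tilde h^*\eta_*E+\tilde g^*(g^*E-g'^*\eta_*E),
\]
I note that $g^*E-g'^*\eta_*E$ is supported on $g'$-exceptional divisors lying over the non-isomorphism locus of $\eta$, hence on $\mathrm{Supp}(F_0)$; therefore $\tilde g^*(g^*E-g'^*\eta_*E)$ is supported on $\mathrm{Supp}(\tilde g^*F_0)\subset\mathrm{Supp}(G)$, so it is contracted by $\phi'$. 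Thus $\phi'_*\tilde g^*(g^*E-g'^*\eta_*E)=0$, yielding $\mu_*h^*E\sim_{\mathbb{Q},U}h'^*\eta_*E$. The main technical obstacle is the termination of the relative MMP in the second paragraph: $(\tilde Z,\tilde\Delta)$ being only dlt with boundary not necessarily big over $Y'$ prevents direct application of BCHM, but the perturbation-and-descent strategy above delivers it.
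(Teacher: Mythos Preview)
Your overall plan---pass to a model with a morphism to $Y'$ and run an MMP---is reasonable, but there are two genuine gaps, and the paper's route is both shorter and avoids them.

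\textbf{Gap 1: termination.} Your proposed fix via ``perturb by $\epsilon A$, apply BCHM, then descend via \eqref{l-rel}'' does not work as stated. Lemma \eqref{l-rel} requires $K_X+\Delta$ to already be semi-ample over the base, which is exactly what you are trying to establish; it produces models for perturbations \emph{from} a given semi-ample model, not the other way around. What you actually need here is \eqref{klt}: since $K_{\tilde Z}+\tilde\Delta\sim_{\mathbb{Q},Y'}G$ with $G\geq 0$ and $\tilde h$-exceptional, the general fibre already satisfies $K+\Delta\sim_{\mathbb Q}0$ and $R(\tilde Z/Y',K_{\tilde Z}+\tilde\Delta)\cong\bigoplus_m\mathcal O_{Y'}$ is finitely generated, so \eqref{klt} gives the good minimal model directly.

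\textbf{Gap 2: $\mu$ need not be a birational contraction.} The $\phi'$-exceptional divisors are precisely the components of $\mathrm{Supp}(G)=\mathrm{Supp}(\tilde g^*F_0)\cup\mathrm{Supp}(\tilde E)$. But a $p$-exceptional divisor $E'$ with $a(E';Z,\Delta_Z)\leq 0$ does not lie in $\mathrm{Supp}(\tilde E)$, and if its centre lies over the isomorphism locus of $\eta$ (or if $E'$ is $h$-horizontal) it need not lie in $\mathrm{Supp}(\tilde g^*F_0)$ either. Such an $E'$ survives on $Z'$, so $\mu=\phi'\circ p^{-1}$ extracts it and $\mu^{-1}$ is not divisor-free. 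One can repair this via \eqref{l-birmm}, but it is extra work.

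\textbf{The paper's approach.} Rather than resolving $\eta$, the paper works over the contraction base $W$ (with $W=Y'$ in the divisorial case). Then $Z\to Y\to W$ is already a morphism, the general fibre of $Z\to W$ has $K+\Delta\sim_{\mathbb Q}0$, and $R(Z/W,d(K_Z+\Delta_Z))\cong R(Y/W,d(K_Y+\Theta))$ is finitely generated since $(Y,\Theta)$ is klt. Now \eqref{klt} yields a good minimal model $\mu:Z\dashrightarrow Z'$ over $W$ directly (no log resolution needed, so no contraction issue), and $h':Z'\to Y'=\mathrm{Proj}\,R(Z'/W,K_{Z'}+\Delta_{Z'})$ is the semi-ample fibration; then \eqref{l-proj} gives the linear equivalence. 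For the last assertion the paper uses that the relative Picard number of $Y/W$ is one: $E\sim_{\mathbb Q,W}a(K_Y+\Theta)$ for some $a\in\mathbb Q$, hence $h^*E\sim_{\mathbb Q,W}a(K_Z+\Delta_Z)$ and $\mu_*h^*E\sim_{\mathbb Q,Y'}0$, so \eqref{l-proj} applies again. This is cleaner than your support-tracking argument on the common resolution.
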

\begin{proof}If $\eta$ is a flip, let $Y\to W$ be the flipping contraction. Consider the morphism $(Z,\Delta_Z)\to W$. Since $Y\to W$ is birational, we know that $K_{Z_t}+\Delta_{Z_t}\sim_{\mathbb{Q}}0$ for a general fiber $Z_t$ of $Z\to W$. Since  for $d$ sufficiently divisible, 
$$R(Z/W,d(K_Z+\Delta))\cong  R (Y/W, d(K_Y+\Theta))$$ 
is a finitely generated $\mathcal{O}_W$-algebra,
by \eqref{klt}, there exists a good minimal model $Z'$ of $(Z,\Delta_Z)$ over $W$. Let $\Delta_{Z'}$ be the push forward of $\Delta_Z$ to $Z'$. Since $K_{Z'}+\Delta_{Z'}$ is semiample over $W$ and 
$$Y'={\rm Proj} R(Z/W,K_Z+\Delta_Z)= {\rm Proj}R (Z'/W, K_{Z'}+\Delta_{Z'}),$$
we have a morphism $h': Z'\to Y'$ such that $K_{Z'}+\Delta_{Z'}\sim_{\mathbb{Q},Y'} 0$. If $\eta$ is a divisorial contraction, we have $W=Y'$ and a same argument as above still holds. Finally, by \eqref{l-proj}, we conclude that
 $K_{Z'}+\Delta_{Z'}\sim _{\mathbb{Q},U}h'^*(K_{Y'}+\eta_*\Theta).$

 To see the last statement, since $E\sim_{\mathbb{Q},W}a(K_Y+\Theta)$ for some $a\in \mathbb{Q}$, then $h^*E\sim_{\mathbb{Q},W} a(K_{Z}+\Delta_Z)$, which implies 
 $\mu_*h^*E\sim_{\mathbb{Q},W}a(K_{Z'}+\Delta_{Z'}).$
 In particular, $\mu_*h^*E \sim_{\mathbb{Q},Y'} 0$, and we can conclude by \eqref{l-proj}. 
  \end{proof}


\section{Abundance for semi log canonical pairs}\label{s-4}

In this subsection, we prove the following statement. An immediate consequence is that if we assume that \eqref{t-lccl}$_{n-1}$ holds, then a similar result holds for the non-klt locus of  an $n$-dimensional pair which is usually not dlt but only dslt.
\begin{prop}\label{p-slc}
Let $(X,\Delta)$ be a  dslt pair, projective over a normal variety $U$ and $n: X^n\to X$ be the normalization. Write $n^*(K_{X}+\Delta)=K_{X^n}+\Delta^n+\Gamma$, where $\Gamma$ is the double locus. Assume that 
\begin{enumerate}
\item there exists an open set $U^0\subset U$, such that if we write $(X^0,\Delta^0)=(X,\Delta)\times_U U^0$, then $K_{X^0}+\Delta^0$ is semi-ample over $U^0$,
\item the image of any non-klt center of $(X^n,\Delta^n+\Gamma)$ intersects $U^0$, 
\item $K_{X^n}+\Delta^n+\Gamma$ is semi-ample over $U$.
\end{enumerate}
Then $K_{X}+\Delta$ is semi-ample over $U$.  
\end{prop}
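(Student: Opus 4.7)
My plan is to apply Koll\'ar's slc gluing theory (cf.~\cite{Kollar07,Kollar10}) to the target of the semi-ample contraction provided by hypothesis (3). By (3), there is a projective morphism $\psi^n\colon X^n\to Z$ over $U$ together with an ample $\Q$-divisor $A$ on $Z$ over $U$ such that $K_{X^n}+\Delta^n+\Gamma\sim_{\Q,U}(\psi^n)^*A$. Let $\bar\nu\colon\bar\Gamma\to\Gamma$ denote the normalization of the conductor and $\tau\colon\bar\Gamma\to\bar\Gamma$ the involution coming from $n\colon X^n\to X$, so that $(X,\Delta)$ is recovered from the triple $(X^n,\Delta^n+\Gamma,\tau)$ by gluing. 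The goal is to build a projective $U$-scheme $Z'$, a morphism $\varpi\colon Z\to Z'$ and an ample $\Q$-divisor $A'$ on $Z'$ with $\varpi^*A'\sim_{\Q,U}A$, such that $\varpi\circ\psi^n$ factors through $n$; this will produce the contraction $\psi\colon X\to Z'$ and hence the semi-ampleness of $K_X+\Delta$ over $U$.

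To construct $Z'$, I would glue $Z$ along the equivalence relation on $\psi^n(\Gamma)\subset Z$ generated by $\psi^n(\bar\nu(p))\sim\psi^n(\bar\nu(\tau(p)))$ for $p\in\bar\Gamma$. Over $U^0$, hypothesis (1) already provides a contraction $\psi^0\colon X^0\to Z^0$; since $(K_{X^n}+\Delta^n+\Gamma)|_{X^{n,0}}=(n^0)^*(K_{X^0}+\Delta^0)$, the Stein factorization of $\psi^0\circ n^0$ through $\psi^n|_{X^{n,0}}$ yields a finite morphism $\sigma\colon Z^{n,0}:=Z\times_U U^0\to Z^0$ realizing the required quotient over $U^0$, so that $Z'|_{U^0}=Z^0$. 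To extend $\sigma$ to $\varpi\colon Z\to Z'$ over $U$, I would verify the hypotheses of Koll\'ar's gluing theorem for the triple $(Z,\psi^n(\Gamma),\bar\tau)$, where $\bar\tau$ is the involution induced by $\tau$ on the normalization of $\psi^n(\Gamma)$.

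Hypothesis (2) enters decisively at this point. Every irreducible component of $\bar\Gamma$, and every intersection stratum, is a non-klt center of the dlt pair $(X^n,\Delta^n+\Gamma)$ (via $\bar\nu$), hence by (2) its image in $U$ meets $U^0$; consequently $\bar\Gamma^0:=\bar\nu^{-1}(f^{-1}(U^0))$ is open and dense in $\bar\Gamma$, and since $\tau$ is a morphism over $U$ it is $\tau$-invariant. Combining this density with the finiteness of the $\mathbf{B}$-representation of $(\bar\Gamma,\Delta_{\bar\Gamma})$ (where $\Delta_{\bar\Gamma}$ is the different coming from $\Delta^n+\Gamma$), any potential ambiguity in $\bar\tau$ or in the gluing of the ample divisor $A$ has finite order, so triviality on $\bar\Gamma^0$ (guaranteed by (1)) forces global compatibility. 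Once the gluing data satisfies Koll\'ar's hypotheses, his theorem produces $Z'$ projective over $U$ together with an ample $A'$ descending $A$; the descended morphism $\psi\colon X\to Z'$ then satisfies $K_X+\Delta\sim_{\Q,U}\psi^*A'$, giving the conclusion.

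The main obstacle, as expected, is the scheme-theoretic step rather than the set-theoretic one: while the equivalence relation defining $Z'$ is clear, realizing it as a projective quotient over $U$ (rather than just over $U^0$, where (1) does the work) requires both Koll\'ar's slc gluing machinery and Fujino's finiteness theorem for $\mathbf{B}$-representations, applied to rigidify the extension of the $U^0$-gluing to $U$. The density input from (2) is what allows this rigidification to force uniqueness, and hypothesis (3) is what makes the input data a genuine semi-ample contraction to begin with.
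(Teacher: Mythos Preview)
Your strategy matches the paper's: descend the semi-ample contraction $X^n\to Y^n$ (your $Z$) to $X$ via Koll\'ar's quotient theory, with hypothesis~(1) furnishing the quotient over $U^0$, hypothesis~(2) propagating finiteness, and $\mathbf{B}$-representation finiteness handling the line bundle. However, two essential technical ingredients are missing, and they are where the real work lies. First, Koll\'ar's quotient theorem \cite[3.40]{Kollar10} requires a stratification on $Y^n$ satisfying (HN) and (HSN), together with the relation being stratified; you never specify one. The paper uses the $g^n$-qlc stratification induced by the minimal qlc structure $g^n:(X^n,\Delta^n+\Gamma)\to Y^n$, and verifying (HN), (HSN) and that the induced relation $T^n\rightrightarrows Y^n$ is stratified is precisely the content of Lemmas~\ref{l-HSN}--\ref{l-sm}, resting on \cite{KK10}. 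Relatedly, the carrier of the relation should be the Stein-factorization target $T^n$ of $\bar\Gamma\to Y^n$, not ``the normalization of $\psi^n(\Gamma)$''; when $\bar\Gamma\to\psi^n(\Gamma)$ is not generically injective these differ, and only $T^n$ is guaranteed to respect the qlc stratification.

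Second, your account of descending the ample divisor is too vague to constitute a proof. The paper's device is to pass to the total spaces $Y^n_H$ of the line bundle $H$ (with $(g^n)^*H=m(K_{X^n}+\Delta^n+\Gamma)$) and rerun the stratified-quotient argument there: the qlc stratification pulls back, and by Lemma~\ref{l-gl} together with~(2) one reduces finiteness of the lifted relation to bounding the stabilizers of the one-dimensional fibers $V_i$ over generic points of strata. Those stabilizers embed, via Poincar\'e residues, into images of $\mathbf{B}$-representations of suitable klt pairs $(Z_i,\Delta_{Z_i})$, whence finiteness. Your phrase ``triviality on $\bar\Gamma^0$ forces global compatibility'' does not capture this: the $\mathbf{B}$-representation is invoked directly to bound the stabilizers of the line-bundle fibers, not merely to rigidify an extension from $U^0$.
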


The main technique we will use is Koll\'ar's powerful gluing theory. 
We will use the results of \cite{Kollar08} and \cite[Chapter 3]{Kollar10}
that give an inductive condition about when the quotient of  {a profinite equivalence relation} $R\rightrightarrows X$ exists. 
By \cite[Chapter 3]{Kollar10}, this inductive condition is well suited for 
the category of stratified varieties, where the stratification is given by non-klt centers, i.e.,  varieties with log canonical (lc) stratifications (cf. \eqref{e-lc}). 

The $f$-qlc stratification of a given minimal quasi log canonical (qlc) structure $f:(X,\Delta)\to Y$ is studied in \cite{KK10} (see also \cite{Ambro03} and \cite{Fujino08}).  
It turns out that the minimal qlc stratification shares many of the properties established for lc  stratifications. Our main observation in this section is that, Koll\'ar's gluing theory also works for compatible $f$-qlc stratifications.

\subsection{Koll\'ar's gluing theory}
In this subsection we briefly review Koll\'ar's theory of finite quotients.
We refer for \cite{Kollar08} and \cite[Chapter 3]{Kollar10} for more details.

\begin{defn}[{\cite[3.34]{Kollar10}}] Let $X$ be a scheme. A {\it stratification} of $X$ is a decomposition
of $X$ into a finite disjoint union of reduced locally closed
subschemes. We will consider stratifications where the strata are of
pure dimensions and indexed by the dimension. We write $X =
\cup_i S_iX$ where $S_iX \subset X$ is the $i$-th dimensional stratum.
Such a stratified scheme is denoted by $(X, S_*)$. We also assume
that $\cup_{ i\le j}S_iX$ is closed for every $j$. The {\it
boundary} of $(X, S_*)$ is the closed subscheme
$$BX := \cup_{i<\dim X}S_iX = X \setminus S_{\dim X}X.$$

Let $(X,S_*)$ and $(Y,S_*)$ be stratified schemes. We say that $f : X \to Y$ is a {\it stratified morphism} if $f(S_iX)\subset S_iY$ for every $i$, or equivalently, if $S_iX=f^{-1}(S_iY)$ for every $i$.
\end{defn}

\begin{defn}[{\cite[3.35]{Kollar10}}] 
Let $(X, S_*)$ be stratified variety. A relation $\sigma_i : R \rightrightarrows (X, S_*)$ is {\it stratified} if each $\sigma_i$ is stratifiable and $\sigma_1^{-1}S_* = \sigma_2^{-1}S_*$. Equivalently, there is a
stratification $(R, \sigma^{-1}S_*)$ such that $r \in \sigma^{-1}S_iR$ iff $\sigma_1(r) \in S_iX$ iff $\sigma_2(r) \in S_iX$.
\end{defn}

\begin{defn}[{\cite[3.37]{Kollar10}}] Let $X$ be an excellent scheme. We consider 4 normality
conditions on stratifications.

{\bf (N)} We say that $(X, S_*)$ has {\it normal strata}, or that it
satisfies condition (N), if each $S_iX$ is normal.

{\bf (SN)} We say that $(X, S_*)$ has {\it seminormal boundary}, or that
it satisfies condition (SN), if $X$ and the boundary
$BX=\cup_{i<\dim X}S_iX$ are both seminormal.

{\bf (HN)} We say that $(X, S_*)$ has {\it hereditarily normal strata}, or
that it satisfies condition (HN), if
\begin{enumerate}
\item[(a)]  $X$ satisfies (N),
\item[(b)] the normalization $\pi : X^n \to X$ is stratifiable, and
\item[(c)] its boundary $B(X^n)$ satisfies (HN).
\end{enumerate}

{\bf (HSN)} We say that $(X, S_*)$ has {\it hereditarily seminormal
boundary}, or that it satisfies condition (HSN), if
\begin{enumerate}
\item[(a)] $X$ satisfies (SN),
\item[(b)] the normalization $\pi : X^n \to X$ is stratifiable, and
\item[(c)] its boundary $B(X^n)$ satisfies (HSN).
\end{enumerate}
\end{defn}

The first example is the following.
\begin{ex}\label{e-lc}
(cf. \cite[3.47]{Kollar10})  Let $(X,\Delta)$ be a log canonical pair. Let $S^*_i(X,\Delta) \subset X$ be the union of all non-klt centers of $(X, \Delta)$ of dimension $\le i$, and $S_iX:=S^*_i(X,\Delta)\setminus S^*_{i-1} (X,\Delta).$
We call this {\it the log canonical stratification} or {\it lc stratification} of $(X, \Delta)$. By \cite[5.7]{KK10} the lc stratification $(X, S_*)$ satisfies all of the conditions (N), (SN), (HN), (HSN). Furthermore, if $D \subset \lfloor \Delta\rfloor$ is a divisor with normalization $D^n$, then $D^n \to X$ is a stratified morphism from the lc
stratification of $(D^n,{\rm Diff}^*_D \Delta)$ to the lc stratification of $(X,\Delta)$ where ${\rm Diff}^*_D\Delta$ is defined by $K_{D^n}+{\rm Diff}_D^*\Delta=(K_X+\Delta)|_{D^n}$.
\end{ex}

The following definitions generalize the concept of lc stratification.
\begin{defn}(\cite[5.3]{KK10}) Let $Y$ be a normal scheme. A {\it minimal quasi-log canonical structure}, or simply a {minimal qlc structure}, on $Y$ is a proper surjective morphism $ f : (X, \Delta) \to Y$ where
\begin{enumerate}
\item $(X, \Delta)$ is a log canonical pair, 
\item $\mathcal{O}_Y = f_* \mathcal{O}_X$ , and 
\item $K_X + \Delta \sim_{f,\mathbb{Q}} 0$.
\end{enumerate}
\end{defn}

\begin{defn}[{\cite[5.4]{KK10}}]\label{d-qlc} Let $Y$ be a normal scheme and assume that it admits a minimal qlc structure $ f : (X,\Delta) \to Y$. We define the {\it qlc stratification of $Y$ with respect to $f$} or simply {\it the $f$-qlc stratification} $(Y,S_*(X/Y,\Delta))$ in the following way: Let $\mathcal{H}_X$ denote the set of
all non-klt centers of $(X, \Delta)$, including the components of $\Delta$ and $X$ itself. For each $Z \in \mathcal{H}_X$	let
$$W_Z := f(Z) \setminus \bigcup_{Z' \in \mathcal{H}_X,  f ( Z ) \not\subset f (Z' )}f(Z').$$
Moreover, let $\mathcal{H}_{Y,f}=\{W_Z|Z\in \mathcal{H}_X \}$.  Then
$$Y=\coprod_{W \in \mathcal{H}_{Y,f}} W$$
is the {\it qlc stratification of $Y$ with respect to $f$}; its strata are the {\it $f$-qlc strata}. We can also define $S_i^*Y$ and $S_iY$ as above.
\end{defn}

We also refer to \cite[3.24-27]{Kollar10} for the definitions of {\it equivalence relation, pro-finite relation} and {\it geometric quotient}.

\begin{theorem}[{\cite[3.40]{Kollar10}}]\label{t-quotient}
 Let $(X, S_*)$ be an excellent scheme or algebraic space over a
field of characteristic 0 with a stratification. Assume that
$(X, S_{*})$ satisfies the conditions (HN) and (HSN). Let $R
\rightrightarrows X$ be a finite, set theoretic, stratified
equivalence relation. Then
\begin{enumerate}
\item the geometric quotient $X/R$ exists,
\item $\pi : X \to X/R$ is stratifiable and
\item $(X/R, \pi_*S_*)$ also satisfies the conditions (HN) and (HSN).
\end{enumerate}
\end{theorem}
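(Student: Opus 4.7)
The plan is to prove the theorem by induction on $\dim X$. The base case $\dim X = 0$ is immediate: $X$ is a finite disjoint union of points, (HN) and (HSN) are vacuous, and the geometric quotient $X/R$ is just the set-theoretic quotient, trivially stratifiable and satisfying (HN) and (HSN).

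For the inductive step, let $\nu \colon X^n \to X$ denote the normalization. The strategy is to build $X/R$ by gluing three simpler quotients: one on the boundary $BX$, one on its preimage $D := \nu^{-1}(BX) = B(X^n)$, and one on the normalization $X^n$. The boundary $BX = \bigcup_{i<\dim X} S_i X$ is stratified and has dimension strictly less than $\dim X$; since $R$ is stratified, it restricts to a finite, set-theoretic, stratified equivalence relation $R_B \rightrightarrows BX$, and by the very definitions of (HN) and (HSN) the pair $(BX, S_*|_{BX})$ still satisfies both conditions. The inductive hypothesis therefore produces a geometric quotient $BX/R_B$ which is stratifiable and again satisfies (HN) and (HSN). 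Similarly $D = B(X^n)$ has dimension $< \dim X$, satisfies (HN) and (HSN) by the definitions applied to $X$, and carries a restricted finite stratified equivalence relation $R_D \rightrightarrows D$, so induction furnishes $D/R_D$ with all required properties.

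It remains to construct $X^n/R^n$ and to perform the gluing. Since $\nu$ is finite and $R$ is stratified, $R$ lifts to a finite, set-theoretic, stratified equivalence relation $R^n \rightrightarrows X^n$. Each top-dimensional stratum of $X^n$ is normal (this is (N), the first clause of (HN)), so the classical theory of finite equivalence relations on normal schemes and algebraic spaces produces the geometric quotient $X^n/R^n$ as a normal algebraic space, and the pushforward stratification satisfies (HN). Because $X$ satisfies (HSN), $X$ is recovered as the seminormal pushout of the diagram $BX \hookleftarrow D \to X^n$, and by functoriality of quotients one obtains compatible morphisms $D/R_D \to BX/R_B$ and $D/R_D \to X^n/R^n$. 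Define $X/R$ to be the seminormal pushout
\[
X/R \, := \, BX/R_B \, \amalg_{D/R_D} \, X^n/R^n.
\]
One then verifies that this pushout exists as an algebraic space, that the natural map $\pi \colon X \to X/R$ is the geometric quotient, that $\pi$ is stratifiable with respect to the pushforward stratification, and that $(X/R, \pi_*S_*)$ inherits (HN) and (HSN) from $BX/R_B$, $D/R_D$, and $X^n/R^n$; in particular the normalization of $X/R$ is $X^n/R^n$ and its boundary is $BX/R_B$, so the hereditary conditions propagate automatically.

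The main obstacle is the gluing step: one must show that the seminormal pushout of algebraic spaces exists in this context, that it commutes with the formation of geometric quotients by finite stratified equivalence relations, and that the resulting object is universal among $R$-invariant morphisms out of $X$. This is precisely the content of the seminormal pushout and hull-descent machinery developed in \cite[Chapter 3]{Kollar10}, and the reason the conditions (HN) and (HSN) are designed hereditarily is exactly so that the pushout description of $X$ is available at every level of the induction, including on $X^n$, where the analysis of $B(X^n)$ requires (HN) and (HSN) one step deeper than for $X$ itself.
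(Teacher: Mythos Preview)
The paper does not give its own proof of this theorem: it is stated with the attribution \cite[3.40]{Kollar10} and used as a black box, with no accompanying \texttt{proof} environment. So there is nothing in the paper to compare your argument against.

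That said, your sketch is broadly faithful to the inductive architecture of Koll\'ar's actual proof in \cite[Chapter~3]{Kollar10}: induction on dimension, passage to the normalization $X^n$, separate treatment of the boundary $BX$ and of $B(X^n)$, and reassembly via a seminormal pushout. A couple of points deserve more care if you were writing this out in full. First, the assertion that $R$ ``lifts'' to a finite stratified equivalence relation $R^n \rightrightarrows X^n$ is not automatic: one must take the pro-finite relation on $X^n$ generated by the pullback of $R$ together with the gluing relation coming from the normalization map, and then argue (using (HN) and the stratified hypothesis) that the result is in fact finite. Second, the claim that the quotient of a normal space by a finite set-theoretic equivalence relation exists ``by the classical theory'' is the genuinely delicate input---this is where characteristic~$0$ and excellence are used, and Koll\'ar devotes substantial effort to it. Your final paragraph correctly identifies the pushout/gluing step as the other main technical point and correctly attributes it to the same source, so as a high-level roadmap your proposal is sound; just be aware that both of these steps are theorems in their own right rather than formalities.
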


The following lemma is important for our purposes.
\begin{lemma}\label{l-gl}
 Let $(X, S_*)$ be a stratified space satisfying $(N)$ and $Z \subset X$ a closed subspace which does not contain any of the irreducible components of the $S_iX$. Let $R\rightrightarrows (X, S_*)$ be a pro-finite, stratified set theoretic equivalence relation. Assume that $R|_{X\setminus Z}$ is a finite set theoretic equivalence relation. Then $R$ is also a finite set theoretic equivalence relation.
\end{lemma}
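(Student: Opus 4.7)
The plan is to reduce the statement to a single normal irreducible stratum via the stratification, and then to exhaust $R$ by a finite sub-equivalence relation $\bar R\subset R$ obtained as the closure of its finite part over $U:=X\setminus Z$. Since $R$ is stratified, we have $\sigma_1^{-1}(S_iX)=\sigma_2^{-1}(S_iX)=:R_i$ and hence a decomposition $R=\bigsqcup_i R_i$ into pro-finite stratified equivalence relations on the individual strata. Finiteness of $R\to X$ is equivalent to finiteness of each $R_i\to S_iX$. By (N) each stratum $S_iX$ is normal, so its irreducible components coincide with its connected components; by hypothesis none of them is contained in $Z$, so passing to one such component and restricting $R_i$ reduces us to the case where $X$ is a normal irreducible variety, $Z\subsetneq X$ is a proper closed subset, $U$ is dense, $R\rightrightarrows X$ is a pro-finite equivalence relation, and $R|_U=R\cap(U\times U)\to U$ is finite.

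Now set $\bar R$ to be the scheme-theoretic closure of $R|_U$ in $R$ (equivalently, in $X\times X$). Continuity of the structure morphisms endows $\bar R$ with the axioms of an equivalence relation: $\Delta_X=\overline{\Delta_U}\subset\bar R$, $\tau(\bar R)=\overline{\tau(R|_U)}=\bar R$, and the multiplication $m\colon R\times_{\sigma_2,X,\sigma_1}R\to R$ satisfies $m(\bar R\times_X\bar R)\subset\overline{m(R|_U\times_U R|_U)}\subset\bar R$. Because $R|_U\to U$ is finite, $R|_U$ has only finitely many irreducible components, hence so does $\bar R$; each such component is integral over $X$ with generic fibers of bounded finite cardinality. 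Applying Zariski's Main Theorem componentwise over the normal base $X$ promotes this to genuine finiteness, so $\bar R\to X$ is a finite equivalence relation on $X$.

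It remains to check $R=\bar R$. Let $q\colon X\to X/\bar R$ denote the finite geometric quotient. The two morphisms $q\circ\sigma_1,\,q\circ\sigma_2\colon R\to X/\bar R$ agree on the open subscheme $R|_U\subset R$ by construction of $\bar R$, hence on the closure $\bar R$ by separatedness of $X/\bar R$. If some irreducible component $C$ of $R$ were not contained in $\bar R$, then necessarily $\sigma_1(C)\subset Z$ (otherwise $C$ would meet $\sigma_1^{-1}(U)\cap\sigma_2^{-1}(U)=R|_U$ and be forced to lie in $\bar R$), and symmetrically $\sigma_2(\tau(C))\subset Z$. Feeding a general point of $\tau(C)$ whose $\sigma_1$-image lies in $U$ into $m$ against $R|_U$ would then produce infinitely many $R$-neighbors of a point of $U$, contradicting the finiteness of $R|_U\to U$ together with the pro-finite bound on fibers. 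Hence $R=\bar R$ and $R\to X$ is finite. The main obstacle is precisely this last step: ruling out ``spurious'' components of $R$ supported entirely over $Z$, where one crucially uses symmetry and transitivity of the relation combined with the density of $U$ coming from the hypothesis on $Z$ and the normality condition (N).
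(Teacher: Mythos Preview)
The paper does not prove this lemma; it simply cites \cite[3.61]{Kollar10}. So there is no in-paper argument to compare against, and your proposal must stand on its own.

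Your reduction to a single stratum via (N) is correct and is the right first move. The difficulty is in your final step, where you claim that any component $C$ of $R$ not lying in $\bar R$ must satisfy $\sigma_1(C)\subset Z$. From $C\cap R|_U=\emptyset$ you only get $C\subset\sigma_1^{-1}(Z)\cup\sigma_2^{-1}(Z)$, hence $\sigma_1(C)\subset Z$ \emph{or} $\sigma_2(C)\subset Z$, not both. Your subsequent argument --- ``feeding a general point of $\tau(C)$ into $m$ against $R|_U$ would produce infinitely many $R$-neighbors of a point of $U$'' --- does not produce a contradiction: composing a single point of $\tau(C)$ with the finitely many points of $R|_U$ above it yields finitely many elements of $R$, and there is no ``pro-finite bound on fibers'' to invoke (fibers of a pro-finite map can be infinite). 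The case where both $\sigma_1(C)$ and $\sigma_2(C)$ lie in $Z$ is not addressed at all.

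What actually makes the lemma work is a direct use of the pro-finite hypothesis that you never invoke: by definition every irreducible component $C$ of $R$ is \emph{finite} over $X$ via each $\sigma_i$. Once you have reduced to $X$ normal with every irreducible component meeting $U$, finiteness forces $\sigma_i(C)$ to be a union of components of $X$, so both $\sigma_1^{-1}(U)\cap C$ and $\sigma_2^{-1}(U)\cap C$ are nonempty open subsets of the irreducible $C$, hence they meet, and $C\cap R|_U\neq\emptyset$. Thus every component of $R$ already lies in $\bar R$, so $R=\bar R$; and since $R|_U$ has finitely many components (being finite over $U$), so does $R$, each finite over $X$. This bypasses the need to verify transitivity of $\bar R$ separately (your continuity argument for $m(\bar R\times_X\bar R)\subset\bar R$ also has a gap: it is not clear that $R|_U\times_U R|_U$ is dense in $\bar R\times_X\bar R$) and avoids the detour through the quotient $X/\bar R$ entirely.
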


\begin{proof}See \cite[3.61]{Kollar10}.
\end{proof}

\subsection{Semi-ampleness for slc pairs}

Let $(X,\Delta )$ be a sdlt pair, $\Gamma^n$ be
the normalization of the double (non-normal) locus of $\Gamma \subset X^n$
and $\tau:\Gamma^n\to \Gamma^n$ be the induced involution. Then 
$(\tau_1,\tau_2):\Gamma^n\rightrightarrows X^n$ is a finite stratified equivalence relation and the
normalization map is given by the quotient morphism
$$\pi:X^n\to X=X^n/R,$$
where $R$ is the finite equivalence relation generated by $\Gamma^n$. 

If we assume that
$$L=\pi^*(K_{X}+\Delta)=K_{X^n}+\Delta^n+\Gamma$$ is
semi-ample on $X^n$, then we have an algebraic fibre space
$g^n:X^n\to Y^n$ given by $|mL|$ for $m>0$ sufficiently big and divisible. 
Let $h^n: \Gamma^n\to T^n$ be the
fibre space induced by $|mL|_{\Gamma^n}|$ after possibly replacing $m$ by a multiple. Then we have the commutative
diagram
\begin{diagram}
\Gamma^n&\pile{\rTo \\ \rTo}&X^n\\
\dTo^{h^n} & & \dTo_{g^n}\\
T^n &\pile{\rTo \\ \rTo} & Y^n,
\end{diagram}
where the morphisms $(\tau_1,\tau_2):\Gamma ^n\to X^n$ induce morphisms $(\sigma_1,\sigma_2):T^n\to Y^n$, where $\sigma_i\circ h^n$ indeed gives the Stein factorization of $g^n\circ \tau_i: \Gamma^n\to Y^n$. Therefore, $g^n:(X^n, \Delta^n+\Gamma)\to Y^n$ gives a   minimal  qlc structure which induces a minimal qlc stratification $S_*(X^n/Y^n,\Delta^n+\Gamma )$ on $Y^n$ (cf. \eqref{d-qlc}) and so does
$h^n:(\Gamma^n,\Theta)\to T^n$, where 
$$K_{\Gamma^n}+\Theta=(K_{X^n}+\Delta^n+\Gamma)|_{\Gamma^n}.$$ 

\begin{lemma}\label{l-HSN}
$Y^n$ and $T^n$ with the above minimal qlc structures satisfy 
conditions (HN) and (HSN).
\end{lemma}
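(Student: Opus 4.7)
The strategy is to reduce the lemma to known statements about qlc stratifications from \cite{KK10}. Concretely, I will verify that the two maps $g^n\colon (X^n,\Delta^n+\Gamma)\to Y^n$ and $h^n\colon (\Gamma^n,\Theta)\to T^n$ carry minimal qlc structures in the sense of \eqref{d-qlc}, and then invoke the analogue of the statement used in Example \eqref{e-lc} (namely \cite[5.7]{KK10}) for qlc stratifications.

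For $g^n$ the three conditions of a minimal qlc structure are immediate: (1) $(X^n,\Delta^n+\Gamma)$ is dlt, hence log canonical, because $(X,\Delta)$ is sdlt; (2) $g^n$ is an algebraic fibration by construction of the Stein factorization, so $\mathcal O_{Y^n}=g^n_*\mathcal O_{X^n}$; and (3) since $g^n$ is the morphism defined by $|mL|$ for the semi-ample divisor $L=K_{X^n}+\Delta^n+\Gamma$, we have $L\sim_{\mathbb Q}(g^n)^*H$ for some ample $\mathbb Q$-Cartier divisor $H$ on $Y^n$, and in particular $K_{X^n}+\Delta^n+\Gamma\sim_{\mathbb Q, Y^n}0$. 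The identical verification applies to $h^n\colon(\Gamma^n,\Theta)\to T^n$, noting that $K_{\Gamma^n}+\Theta=L|_{\Gamma^n}$ is semi-ample because $L$ is, and that $(\Gamma^n,\Theta)$ is log canonical by adjunction on the dlt pair $(X^n,\Delta^n+\Gamma)$.

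Once minimal qlc structures are in place, the qlc stratifications $(Y^n,S_*(X^n/Y^n,\Delta^n+\Gamma))$ and $(T^n,S_*(\Gamma^n/T^n,\Theta))$ satisfy (N) because every qlc stratum $W$ has normal closure $\overline W$; indeed each such $\overline W$ inherits its own minimal qlc structure by restricting $g^n$ (resp.\ $h^n$) to the non-klt centers mapping onto $\overline W$, and normality of $\overline W$ is the qlc analogue of the normality of lc centers proved in \cite{KK10}. Since both $Y^n$ and $T^n$ are themselves normal (as images of normal varieties under algebraic fibrations), their normalizations are the identity, and (HN) and (HSN) reduce to checking that their boundaries satisfy these conditions recursively. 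The boundary $BY^n$ is the union of the closures $\overline{W}$ of lower-dimensional strata, whose normalization is the disjoint union $\coprod \overline W$; each summand again carries a minimal qlc structure from the previous paragraph, and its own qlc stratification is precisely the restricted stratification. Hence an induction on dimension, using the seminormality results of \cite[5.7]{KK10} together with this qlc-compatibility of restriction to strata, finishes both (HN) and (HSN).

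The main obstacle will be formalizing the inductive step: that the restriction of the qlc stratification of $Y^n$ to $BY^n$ really agrees (after normalization) with the qlc stratifications induced on each $\overline W$, and that seminormality passes from $Y^n$ to $BY^n$. For the first, one must identify non-klt centers of $(X^n,\Delta^n+\Gamma)$ mapping into $\overline W$ with non-klt centers of the adjunction-divisor pair on the union of non-klt centers of $X^n$ lying over $\overline W$; this is essentially a bookkeeping exercise following the definition \eqref{d-qlc}. For the second, one uses exactly the seminormality assertion in \cite[5.7]{KK10}, which is a direct consequence of the Kollár--Shokurov connectedness theorem applied in the qlc setting as in \cite{KK10}.
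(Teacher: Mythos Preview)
Your proposal is correct and takes essentially the same approach as the paper: both reduce the lemma to \cite[Proposition 5.7]{KK10}, which asserts directly that any minimal qlc stratification satisfies (N), (SN), (HN), and (HSN). The paper's proof is in fact a one-line citation of that result; your verification that $g^n$ and $h^n$ are minimal qlc structures is already recorded in the paragraph preceding the lemma, and your inductive sketch of how \cite[5.7]{KK10} works internally is unnecessary (and, as you yourself flag in the final paragraph, not fully worked out), since the cited proposition is a black box that delivers all four conditions at once.
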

\begin{proof}See \cite[Proposition 5.7]{KK10}.
\end{proof}

\begin{lemma}\label{l-sm}
$(\sigma_1,\sigma_2):T^n\rightrightarrows Y^n$ gives a stratified equivalence relation.
\end{lemma}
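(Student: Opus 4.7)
The plan is to verify two things: (i) $(\sigma_1,\sigma_2):T^n\rightrightarrows Y^n$ is a set-theoretic equivalence relation, and (ii) it is stratified in the sense that $\sigma_1^{-1}S_iY^n=\sigma_2^{-1}S_iY^n=S_iT^n$, where the stratifications are the minimal qlc stratifications induced by $g^n$ and $h^n$ as in \eqref{d-qlc}. The central geometric input is the involution $\tau$ on $\Gamma^n$ (which satisfies $\tau_1=\tau_2\circ\tau$). Because $\pi\circ\tau_1=\pi\circ\tau_2$ we have $\tau_1^*L=\tau_2^*L=L|_{\Gamma^n}$ for $L=\pi^*(K_X+\Delta)$, so $h^n$ is the connected part of the Stein factorization of both $g^n\circ\tau_j$, producing the finite morphisms $\sigma_1,\sigma_2$. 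The involution $\tau$ preserves $L|_{\Gamma^n}$ and hence descends to an involution $\tau_T:T^n\to T^n$ with $\sigma_1=\sigma_2\circ\tau_T$, which yields symmetry; the remaining reflexivity and transitivity descend from the corresponding properties of $\Gamma^n\rightrightarrows X^n$ using the compatibility of the two Stein factorizations in the rows of the diagram together with the identification $X=X^n/R$.

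For the stratification compatibility, the main tool is adjunction together with \eqref{e-lc}. Since $\Gamma$ appears with coefficient one in $\Delta^n+\Gamma$, there is a natural correspondence between non-klt centers of $(\Gamma^n,\Theta)$ and those non-klt centers of $(X^n,\Delta^n+\Gamma)$ contained in $\Gamma$, given by $Z\mapsto\tau_j(Z)$ for either $j=1,2$ and preserving dimensions. Combining this with the commutative square $\sigma_j\circ h^n=g^n\circ\tau_j$ and the finiteness of $\sigma_j$ (so that $\sigma_j$ preserves dimensions of images), the qlc stratum $W_Z\subset T^n$ indexed by a minimal non-klt center $Z$ of $(\Gamma^n,\Theta)$ is mapped into the qlc stratum $W_{\tau_j(Z)}\subset Y^n$ of the same dimension, and $\sigma_j^{-1}(S_iY^n)$ is precisely $S_iT^n$. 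The identity $\sigma_1^{-1}S_*=\sigma_2^{-1}S_*$ then follows because $\tau_T$ is a stratified automorphism of $T^n$: $\tau$ permutes the two branches of the normalization of the double locus and so preserves both $\Theta$ and the collection of non-klt centers of $(\Gamma^n,\Theta)$, whence $\tau_T$ preserves the qlc stratification of $T^n$ and intertwines $\sigma_1$ with $\sigma_2$.

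The principal technical difficulty will be verifying that $\sigma_j(W_Z)$ lies in the open stratum $W_{\tau_j(Z)}$ of $Y^n$ rather than falling accidentally into a strictly smaller qlc stratum. This requires the minimality built into the definition of qlc strata, together with the observation that a minimal non-klt center of $(\Gamma^n,\Theta)$ through a point $z$ corresponds via $\tau_j$ to a minimal non-klt center of $(X^n,\Delta^n+\Gamma)$ through $\tau_j(z)$ (among centers contained in $\Gamma$), and the Stein factorization structure which rules out any spurious dimension drop when passing from $\Gamma^n$ to $T^n$ or from $X^n$ to $Y^n$.
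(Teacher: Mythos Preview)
Your outline has a real gap in the stratification check. You correctly note that non-klt centers of $(\Gamma^n,\Theta)$ correspond via $\tau_j$ to non-klt centers of $(X^n,\Delta^n+\Gamma)$ \emph{contained in $\Gamma$}, and you build everything on this correspondence. But the $g^n$-qlc stratification of $Y^n$ at a point $y$ is governed by \emph{all} non-klt centers of $(X^n,\Delta^n+\Gamma)$ over $y$, not only those lying in $\Gamma$. So even if the minimal center of $(\Gamma^n,\Theta)$ through $z$ has $h^n$-image of dimension $i$, there could be a non-klt center of $(X^n,\Delta^n+\Gamma)$ through $\tau_j(z)$, lying \emph{outside} $\Gamma$, whose $g^n$-image has dimension $<i$; this would place $\sigma_j(h^n(z))$ in $S^*_{i-1}Y^n$ and break $\sigma_j(S_iT^n)\subset S_iY^n$. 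Your third paragraph acknowledges exactly this difficulty but only invokes minimality ``among centers contained in $\Gamma$'', which is precisely what is insufficient.

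The paper gets around this by structuring the argument as an induction on $i$ and by using a genuinely nontrivial input, \cite[Theorem~1.7]{KK10}: given an $i$-dimensional qlc stratum $Z\subset Y^n$ meeting $\sigma_j(T^n)$, one takes the Stein factorization of $\Gamma\cap (g^n)^{-1}(\bar Z)\to \bar Z$ and that theorem supplies, for every component $W_i$ of the middle space, a non-klt center $Z_{W_i}\subset \Gamma$ of $(X^n,\Delta^n+\Gamma)$ dominating $W_i$. Pulling back to $\Gamma^n$ produces non-klt centers of $(\Gamma^n,\Theta)$ with $h^n$-image of dimension $i$, giving $\sigma_j^{-1}(S_iY^n)\subset S_iT^n$; this reverse inclusion at levels $<i$ is what the paper then feeds back into the forward inclusion via induction. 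Without an ingredient of this type, your argument does not close. (A minor point: the relation $(\tau_1,\tau_2):\Gamma^n\rightrightarrows X^n$ is not itself an equivalence relation, so reflexivity and transitivity for $T^n\rightrightarrows Y^n$ do not simply ``descend''; the paper only proves the stratified property here and defers finiteness of the \emph{generated} equivalence relation to Proposition~\ref{p-fer}.)
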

\begin{proof}To verify that $\sigma_1^{-1}S_*=\sigma_2^{-1}S_*$, we only need to check that $\sigma_j^{-1}S_*$ coincides with the minimal qlc stratification of $T^n $.
We apply induction on $i$ and assume that the statement is true for all dimensions less than $i$.

Since $\sigma_j$ is finite, we first verify that
$S_iT^n \subseteq \sigma_j^{-1} (S_iY^n)$  for all $i$,
which is equivalent to $\sigma_j(S_iT^n)\subseteq S_iY^n$ for all $i$. 
 Let $Z$ be an $i$-dimensional stratum of $T^n$ for the  minimal qlc stratification, it follows from the definition that there is a
non-klt center $Z_i$ of $(\Gamma^n, \Theta)$ such that $h^n(Z_i)=\bar{Z}$ which is the closure of $Z$. By inversion
of adjuction, $\tau_j(Z_i)$ is also a non-klt center of $(X^n,\Delta^n+\Gamma)$,
and so $\sigma_j(\bar{Z})= g^n\circ \tau_j(Z_i)$ is contained in $S^*_iY^n$. Therefore, it follows from the induction that 
$$\sigma_j({Z})=\sigma_j(\bar{Z}\setminus S^*_{i-1}(T^n))\subset S^*_iY^n\setminus S^*_{i-1}Y^n=S_iY^n.$$

For the other direction, let $Z$ be an $i$-dimensional stratum on $Y^n$. 
We assume $\sigma_j^{-1}(Z)\neq \emptyset $. Since $ \sigma_j(T^n)$ is a union of strata,  we know that $Z \subset \sigma_j (T^n)$. By definition, there is a non-klt center $Z_i$ of $(X^n,\Delta^n+\Gamma)$, such that $g^n(Z_i)=\bar{Z}$. 
Therefore, it follows from \cite[Theorem 1.7]{KK10} that if we let $p:\Gamma\bigcap (g^n)^{-1}(\bar Z)\to W$ and $q:W\to Z$ be the Stein factorization, then for every irreducible component $W_i\subset W$, there is a non-klt center $Z_{W_i}\subset \Gamma$ of $(X^n,\Delta^n+\Gamma)$ such that $p(Z_{W_i})=W_i$.  

It is easy to see that $\Gamma^n\to X^n$ is a stratified morphism for the lc stratification given by $(\Gamma^n,\Theta)$ and $(X^n,\Delta^n+\Gamma)$ (cf. \cite[3.48]{Kollar10}). So the preimages of $Z_{W_i}$ in $\Gamma^n$ are unions of non-klt centers  $\tilde{Z}_{W_{i,j}}$.
Thus $h^n(\tilde{Z}_{W_{i,j}})\setminus S^*_{i-1}T^n$ is a strata in $S_iT^n$and because of the induction assumption its image in $Y^n$ is $Z$. Therefore,  we conclude that $\sigma_j^{-1}(S_iY^n)\subseteq S_iT^n$.

\end{proof}
\begin{prop}\label{p-fer} $T^n\rightrightarrows Y^n$ generates a finite set
theoretic equivalence relation.
\end{prop}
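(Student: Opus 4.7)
The plan is to apply Lemma~\ref{l-gl} to the stratified equivalence relation on $Y^n$ generated by $(\sigma_1,\sigma_2):T^n\rightrightarrows Y^n$, reducing the question of finiteness to the corresponding question on an open subset of $Y^n$. First I would assemble the hypotheses of Lemma~\ref{l-gl}: Lemma~\ref{l-HSN} gives condition (HN) for $(Y^n,S_*)$; Lemma~\ref{l-sm} shows the relation is stratified; and each $\sigma_i$ is finite, being the finite part of the Stein factorization of $g^n\circ\tau_i$, with $\tau_i$ itself finite (since $\Gamma\hookrightarrow X^n$ is a closed subscheme whose normalization is $\Gamma^n$). In particular the relation is pro-finite.

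Next I would identify the closed subset $Z\subset Y^n$ needed in Lemma~\ref{l-gl}. Let $Y^{n,0}\subset Y^n$ denote the preimage of $U^0$ under the induced projective morphism $Y^n\to U$, and set $Z=Y^n\setminus Y^{n,0}$. By hypothesis~(2) of Proposition~\ref{p-slc}, every non-klt center of $(X^n,\Delta^n+\Gamma)$ has image in $U$ meeting $U^0$; since each $f$-qlc stratum of $Y^n$ is an open subset of the image of some non-klt center (Definition~\ref{d-qlc}), every stratum meets $Y^{n,0}$, so $Z$ contains no irreducible component of any stratum.

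The core step is to establish finiteness of $R|_{Y^{n,0}}$. By hypothesis~(1) of Proposition~\ref{p-slc}, $K_{X^0}+\Delta^0\sim_{\mathbb{Q},U^0}(\phi^0)^*H$ for a projective morphism $\phi^0:X^0\to W^0$ over $U^0$, with $H$ ample over $U^0$. Composing with the finite normalization $n^0:X^{n,0}\to X^0$ and taking Stein factorization yields $X^{n,0}\to W^{n,0}\to W^0$ with $W^{n,0}\to W^0$ finite. Since $L|_{X^{n,0}}=(n^0)^*(K_{X^0}+\Delta^0)$ is the pullback of an ample divisor from $W^{n,0}$, the canonical isomorphism $Y^{n,0}\cong W^{n,0}$ identifies $g^n|_{X^{n,0}}$ with $X^{n,0}\to W^{n,0}$ and yields a finite morphism $\pi^0:Y^{n,0}\to W^0$. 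For any $t\in T^{n,0}$ with $(\sigma_1(t),\sigma_2(t))=(p,q)$, any lift $\gamma\in\Gamma^{n,0}$ via $h^n$ satisfies $n^0(\tau_1(\gamma))=n^0(\tau_2(\gamma))$ in $X^0$ (since $\Gamma^{n,0}\rightrightarrows X^{n,0}$ is the finite relation whose quotient is the normalization $n^0:X^{n,0}\to X^0$), and hence $\pi^0(p)=\pi^0(q)$. Iterating, every $R$-equivalence class in $Y^{n,0}$ lies in a single fiber of $\pi^0$, and these fibers are finite. Lemma~\ref{l-gl} then gives the result. The main obstacle I expect is the identification $Y^{n,0}\cong W^{n,0}$, which hinges on showing that the semi-ample fibration on $X^{n,0}$ defined by $L$ agrees with the Stein factorization of $\phi^0\circ n^0$; this in turn reduces to comparing section rings.
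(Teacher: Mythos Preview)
Your proposal is correct and follows essentially the same route as the paper: both apply Lemma~\ref{l-gl} with $Z$ the preimage of $U\setminus U^0$ in $Y^n$, use assumption~(2) to see no stratum lies in $Z$, and use the semi-ample fibration $X^0\to Y^0$ (your $W^0$) from assumption~(1) to exhibit the quotient over $U^0$ and hence finiteness there. Your treatment is simply more explicit about the identification $Y^{n,0}\cong W^{n,0}$ and about why equivalence classes lie in fibers of the finite map $\pi^0$, whereas the paper states directly that the quotient over $U^0$ is $Y^0$; the obstacle you flag is not a real issue, since both $Y^{n,0}$ and $W^{n,0}$ arise as ${\rm Proj}$ of the same relative section ring of $L|_{X^{n,0}}$.
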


\begin{proof}Let $Z\subset Y^n$ be the closed subset given by the preimage of $U\setminus U^0$ in $Y^n$. 
It suffices to verify the conditions of \eqref{l-gl}. By our assumptions  
there is no non-klt center of $(X,\Delta )$ contained in the preimage of $U\setminus U^0$ and hence no strata of $(Y^n,S_*)$ contained in $Z$. 
We let $X^0\to Y^0$ be the algebraic fibration induced by the semi-ample divisor $(K_{X}+\Delta)|_{X^0}$ over $U^0$.
Therefore, over $U^0$ the quotient $Y^n/T^n$ exists and is given by $Y^0$.
This implies that the restriction of the relation generated by $T^n$ to $Y^n\times _U U^0$ is finite.
\end{proof}

\begin{theorem}\label{gluing}
The quotient $Y$ of $T^n\rightrightarrows Y^n$ exists. Furthermore,
there exists a morphism $g:X\to Y$.
\end{theorem}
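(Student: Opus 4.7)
The plan is to apply Theorem~\eqref{t-quotient} to obtain $Y$ as the geometric quotient $Y^n/T^n$, and then to descend the composition $Y^n\to Y$ through $g^n:X^n\to Y^n$ to a morphism $g:X\to Y$ using the fact that $X$ is the quotient of $X^n$ by the equivalence relation induced by the double locus.

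First I would verify that the hypotheses of Theorem~\eqref{t-quotient} are all in place for the stratified variety $(Y^n,S_*)$ and the relation $(\sigma_1,\sigma_2):T^n\rightrightarrows Y^n$. Conditions (HN) and (HSN) for the qlc stratification of $Y^n$ are supplied by Lemma~\eqref{l-HSN}. That the relation is stratified is Lemma~\eqref{l-sm}, and that it generates a finite set theoretic equivalence relation is Proposition~\eqref{p-fer}. Invoking Theorem~\eqref{t-quotient} then produces a stratifiable quotient map $\pi:Y^n\to Y:=Y^n/T^n$ which by construction satisfies $\pi\circ\sigma_1=\pi\circ\sigma_2$.

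Next I would construct $g$ by descent. The commutative diagram displayed before Lemma~\eqref{l-HSN} gives $\sigma_i\circ h^n=g^n\circ\tau_i$ for $i=1,2$. Composing with $\pi$ and using $\pi\circ\sigma_1=\pi\circ\sigma_2$ yields
$$\pi\circ g^n\circ\tau_1 \;=\; \pi\circ\sigma_1\circ h^n \;=\; \pi\circ\sigma_2\circ h^n \;=\; \pi\circ g^n\circ\tau_2.$$
Since $(X,\Delta)$ is sdlt, $X$ is seminormal and coincides with the quotient of $X^n$ by the finite set theoretic equivalence relation $R$ generated by $(\tau_1,\tau_2):\Gamma^n\rightrightarrows X^n$ (the standard description of the normalization of an sdlt pair, cf.\ \cite{Kollar10}). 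By the universal property of this quotient, $\pi\circ g^n:X^n\to Y$ descends uniquely to the required morphism $g:X\to Y$.

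The substantive content of the theorem is concentrated in the preceding lemmas; once those are in hand, the assembly of $Y$ and $g$ is purely formal. The main obstacle, already dealt with in Proposition~\eqref{p-fer}, is establishing the finiteness of the equivalence relation $T^n\rightrightarrows Y^n$: this is precisely where the hypothesis that every non-klt center of $(X^n,\Delta^n+\Gamma)$ meets $U^0$ enters, confining the possibly infinite behaviour to a closed subset containing no strata and allowing Lemma~\eqref{l-gl} to reduce finiteness to the already-known semi-ample quotient $X^0\to Y^0$ over $U^0$.
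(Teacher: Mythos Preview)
Your proposal is correct and follows exactly the paper's approach: the paper's proof is the single line ``By \eqref{l-HSN}, \eqref{l-sm}, \eqref{p-fer} and \eqref{t-quotient}.'' You have merely unpacked this citation and, in addition, spelled out the descent argument for $g$ (which the paper leaves implicit); that argument is correct, since $X=X^n/R$ with $R$ generated by $(\tau_1,\tau_2)$, and $\pi\circ g^n$ coequalizes $\tau_1,\tau_2$.
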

\begin{proof} By \eqref{l-HSN}, \eqref{l-sm}, \eqref{p-fer} and \eqref{t-quotient}.\end{proof}

\begin{proof}[Proof of \eqref{p-slc}] We follow very closely the arguments of \cite[3.84]{Kollar10}.  

Let $m$ be a positive integer such that $M:=m(K_{X^n}+\Delta^n+\Gamma)$ is Cartier and  base point free over $U$, 
inducing the algebraic fibre space $g^n:X^n\to Y^n$ over $U$. 
Therefore, there is a very ample line bundle $H$ on $Y^n$ such that 
${g^n}^*H=M$. Let $p_X:X^n_M\to X^n $ and $p_Y:Y^n_H\to Y^n$ be the total spaces of the line bundles $M$ and $H$. If we let $\Delta^n_M=p_X^{-1}(\Delta^n)$ and $\Gamma_M=p_X^{-1}(\Gamma )$,
then the morphism $g^n_M: (X^n_M,\Delta^n_M+\Gamma _M)\to Y^n_H$ yields a minimal qlc structure on $Y^n_H$, whose induced stratification is simply the pull 
back of the stratification of $Y^n$ coming from the minimal qlc structure on $(X^n,\Delta^n+\Gamma)$. In particular, this minimal qlc-stratification on $Y^n_H$ satisfies conditions (HN) and (HSN).

 Let $\Gamma^n_M\to \Gamma^n$ (resp. $T^n_H\to T^n$) be the total space of the restriction of the  line 
bundle $M$ (resp. $H$) to $\Gamma^n$ (resp. $T^n$). The involution $\tau$ of $\Gamma^n$ lifts to an involution of $\Gamma^n_M$ which preserves the non-klt centers of $(\Gamma^n_M, {\rm Diff}_{\Gamma^n_M}^*\Delta^n_M)$.
Similarly as above, this yields a pro-finite relation $({\sigma _1}_H,{\sigma _2}_H):T^n_H\rightrightarrows Y^n_H$ which is also stratified.
In what follows, we will check that the pro-finite relation $({\sigma_1}_H,{\sigma_2}_H):T^n_H\rightrightarrows Y^n_H$ indeed generates a finite set theoretic equivalence.

Since the morphism from each component of $R_H^n\times_{Y^n_H} S_iY^n_H$ to $S_iY^n_H$ is finite surjective,  where $R_H^n $ is the relation generated by $T^n_H\rightrightarrows Y^n_H$, to show the finiteness, i.e., to check that $R_H^n\times_{Y^n_H} S_iY^n_H\to S_iY^n_H$ is finite, we only need to verify this over the generic point $y$  of each strata of $\mathcal H _{Y^0,g^0}$ of $Y$ (see \eqref{l-gl}). The  pre-images $y_1,...,y_m$ of $y$ in $Y^n$ are all generic points of some strata of
$\mathcal H _{Y^n,g^n}$. 
Let $V_i$ denote the fiber of $Y^n_H$ over the point $y_i\in Y^n$, which is the 1-dimensional vector space 
$$V_i : = Y^n_H\otimes_{Y^n} k(y_i)\cong H^0(y_i, \mathcal O _{y_i}(H|_{y_i}))\cong H^0(X^n_{y_i}, \mathcal{O}_{X_{y_i}^n} (m(K _{X^n}+\Delta^n + \Gamma) |_{X^n_{y_i}} )),$$
where $X^n_{y_i}=X^n\times_{Y^n}\{y_i\}$. The involution $\sigma_H: T_H\to T_H$ gives a collection of isomorphisms $\sigma_{ijk} :V_i \to V_j$.  (A given $y_i$ can have several preimages in $T^n$, and each of these gives an isomorphism
of $V_i$ to some $V_j$. Hence there could be several isomorphisms from $V_i$ to $V_j$ for fixed $i, j$.) The $\sigma_{ijk}$ generate a groupoid. All possible composites
$$\sigma_{ij_2k_2} \circ \sigma_{j_2j_3k_3} \circ \cdots \circ \sigma_{j_nik_n} :V_i \to V_i$$
generate a subgroup of ${\rm Aut}(V_i)$, called the {\it stabilizer} $ {\rm stab}(V_i)$. Note that $T^n_H \rightrightarrows Y^n_H$ is a finite set theoretic equivalence relation iff ${\rm stab}(V_i) \subset {\rm Aut}(V_i) $ is a finite subgroup for every $i$.

From the definition of the minimal qlc  stratification, we know that there are non-klt centers of $(X^n,\Delta^n+\Gamma)$ which dominate the closure 
of $y_i$. Let $Z_i\subset X^n_{y_i}$ be the generic fiber over $y_i$ of a minimal one among all such non-klt centers. Then $Z_i$ is normal and if we write 
$(K_{X^n}+\Delta^n+\Gamma)|_{Z_i}=K_{Z_i}+\Delta_{Z_i}$, then $(Z_i,\Delta_{Z_i})$ is a klt pair and $m(K_{Z_i}+\Delta_{Z_i})\sim 0$. Therefore, we have 
\begin{enumerate}
\item  the Poincar\'e residue map gives a canonical isomorphism 
$$R^m : V_i \cong H^0(Z_i, \mathcal{O}_{Z_i}(m(K_{Z_i}+\Delta_{Z_i} ))),	\qquad \mbox{and}$$
\item  $\sigma_{ijk} : V_i \to V_j$ is induced by a birational map $\phi_{jik} : Z_j \dasharrow Z_i$ (cf. \cite[3.95]{Kollar10}).
\end{enumerate}
We note that by \cite[3.95]{Kollar10}, this is independent of the choice of $Z_i$ up to an element in the image of the {\bf B}-representation
${\rm Bir}(Z_i,\Delta_{Z_i})\to {\rm Aut}(V_i).$ 
 From this identification, ${\rm stab}(V_i)$ is contained in the image of the {\bf B}-representation. 
It follows from the argument in \cite[3.84]{Kollar10} or \cite[4.5]{Gongyo10} that the image of the {\bf B}-representation is finite.
Replacing $m$ by a multiple, we may assume that the image of the {\bf B}-representation is trivial.

It follows from \eqref{t-quotient} that we obtain a quotient $A$ of $T^n_H\rightrightarrows Y^n_H$ which is a line bundle on $Y$ whose pull-back to $Y^n$ is $\mathcal O _{Y^n}(H)$. 
Since $H$ is a divisor on $Y^n$ which is very ample over $U$, it follows that $A$ is ample over $U$. 
It also follows that  $g^*A=\mathcal O _X(m(K_X+\Delta))$, and hence $K_X+\Delta$ is semi-ample over $U$.
\end{proof}

\section{Base point free theorem}

In this section, we recall a version of Kawamata's theorem (cf.
\cite{Kawamata85}, \cite{Ambro05}, \cite{Fujino05}, \cite{Fujino09} and \cite{GF11}) on good minimal models.

\begin{theorem}[Base point free theorem]\label{abundance}Let $f:X\to U$ be a projective morphism and $(X,\Delta)$ a $\mathbb{Q}$-factorial dlt
pair. Assume that there exists an open subset $U^0\subset U$, such that
\begin{enumerate}
\item the image of any strata $S_I$ of $S=\lfloor \Delta \rfloor$  intersects $U^0$,
 \item $K_X+\Delta$ is nef and $(K_X+\Delta)|_{X^0}$ is semi-ample over $U^0$ where $X^0=X\times_U U^0$, and
\item for any component $S_i$ of $S$, $(K_X+\Delta)|_{S_i}$ is
semi-ample over $U$.
\end{enumerate}
Then $K_X+\Delta$  is semi-ample over $U$.
\end{theorem}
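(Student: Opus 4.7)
The plan is a two-step reduction: first from semi-ampleness on each component $S_i$ of the non-klt locus to semi-ampleness on the whole of $S$, and then from semi-ampleness on $S$ together with semi-ampleness over $U^0$ to semi-ampleness of $K_X+\Delta$ over $U$.

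For the first step I would set $S=\lfloor\Delta\rfloor$ and endow it with the dslt structure $(S,\Delta_S)$ defined by adjunction, $K_S+\Delta_S=(K_X+\Delta)|_S$, and then apply Proposition 3.1 (p-slc) to this pair. Hypothesis (3) of the present theorem says exactly that the pullback of $K_S+\Delta_S$ to every component of the normalization $S^n=\coprod S_i^n$ is semi-ample over $U$, which is condition (3) of Proposition 3.1. Hypotheses (1) and (2) of the present theorem, restricted to $S$, say respectively that every non-klt stratum of $(S^n,\Delta_{S^n}+\Gamma)$ meets $X^0\cap S$ and that $(K_X+\Delta)|_{S\cap X^0}$ is semi-ample over $U^0$, which are conditions (1) and (2) of Proposition 3.1. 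Hence Proposition 3.1 applies and yields that $(K_X+\Delta)|_S$ is semi-ample over $U$.

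For the second step I would invoke the relative dlt version of Kawamata's abundance theorem, in the form of Fujino's generalization from \cite{Fujino05}, \cite{Fujino09} (compare also \cite{Ambro05}, \cite{GF11}): if $(X,\Delta)$ is a $\mathbb{Q}$-factorial dlt pair projective over $U$ such that $K_X+\Delta$ is nef over $U$, the restriction $(K_X+\Delta)|_{\lfloor\Delta\rfloor}$ is semi-ample over $U$, and $K_X+\Delta$ is abundant over $U$, then $K_X+\Delta$ is semi-ample over $U$. The boundary input is provided by the first step, and the abundance input is provided by assumption (2) together with the canonical bundle formula (Theorem 2.5): semi-ampleness on $X^0$ forces the numerical dimension and the Iitaka dimension of $K_X+\Delta$ to agree on the generic fibers of $f$, and the canonical bundle formula allows us to lift this into a global abundance statement over $U$. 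Combining these ingredients, Fujino's theorem gives the desired semi-ampleness of $K_X+\Delta$ over $U$.

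The main technical obstacle I anticipate is the global abundance step: a priori $K_X+\Delta$ could be abundant over $U^0$ but fail to be so over $U\setminus U^0$. The condition in (1) that every stratum of $S$ meets $X^0$, combined with the rigidity provided by Kawamata's canonical bundle formula (which essentially detects positivity of the moduli part from its behavior on a dense open set of the base), is exactly what rules this pathology out and allows Fujino's machinery to go through cleanly.
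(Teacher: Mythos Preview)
Your two-step outline matches the paper's proof exactly: first apply Proposition~\ref{p-slc} to obtain semi-ampleness of $(K_X+\Delta)|_S$ over $U$, then invoke \cite[1.1]{Fujino05}. Your verification of the hypotheses of Proposition~\ref{p-slc} in Step~1 is correct.

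Where you diverge from the paper is in Step~2, and the divergence stems from a misconception. Fujino's theorem requires $aL-(K_X+\Delta)$ to be nef and log abundant over $U$; with $L=m(K_X+\Delta)$ this reduces to nef-abundance of $K_X+\Delta$ over $U$ together with abundance of its restriction to each lc center. The latter is immediate once Step~1 gives semi-ampleness on $S$. For the former, ``abundant over $U$'' means $\kappa(K_X+\Delta/U)=\nu(K_X+\Delta/U)$, and for a relatively nef divisor both invariants are computed on the \emph{generic} fiber of $f$. Since the generic fiber lies over $U^0$, hypothesis~(2) gives semi-ampleness there, hence $\kappa=\nu$ on the generic fiber, and the relative abundance over $U$ follows at once. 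There is no separate condition ``abundant over $U\setminus U^0$'' to check, so the obstacle you anticipate in your last paragraph does not arise, and neither hypothesis~(1) nor the canonical bundle formula of Theorem~\ref{t-can} is needed at this stage. The paper accordingly dispatches Step~2 in a single sentence.
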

\begin{proof}By \eqref{p-slc}, $(K_X+\Delta)|_S$  is semi-ample over $U$. The theorem now follows from \cite[1.1]{Fujino05}.
\end{proof}


\section{Termination of flips}
In this section, we will prove \eqref{t-lccl}. We proceed by induction on $n=\dim X$. Therefore we may
assume \eqref{t-lccl} holds in dimension ${n-1}$. As we have seen (cf. \eqref{abundance}), the remaining problem is to show the existence of a minimal model. Here our strategy is similar to \cite[Section 5]{BCHM10}.  

For any proper birational morphism $\mu:X'\to X$, we write $\mu^*(K_X+\Delta)+F=K_{X'}+\Delta'$ where $F$, $\Delta'$ are effective divisors without common components. 
By \eqref{l-birmm} it suffices to prove the existence of a minimal model for $(X',\Delta')$. Thus in the following we may  replace $X$ by a higher model $X'$.

In what follows $\bullet ^0$ will denote restriction to $U^0$ so that for example $X^0=X\times_U U^0$ and $f^0=f|_{X^0}$.  Let 
$\bar{Y}^0={\bf \rm Proj}R({X^0}/U^0;K_{X^0}+\Delta^0)$.  Let $\bar Y\supset \bar Y^0$  be a compactification over $U$.  
\begin{lemma}\label{c}
There exist a smooth variety $Y$,  a birational morphism $Y\to \bar Y$, a morphism $h_X:X\to Y$ and a dlt log pair $(Z,\Delta _Z)$ which are projective over $U$ such that
\begin{enumerate} 
\item there is $\psi:X\dasharrow Z$ a good minimal model for $K_X+\Delta$ over $Y$ and a corresponding projective morphism $h: Z\to  Y$ such that $h_X=h\circ \psi$, 
\item there exists an effective $\mathbb{Q}$-divisor $B$ and a $g$-nef $\mathbb{Q}$-divisor class $J$ such that $K_{ Y}+B+J$ is big over $U$ and  for sufficiently divisible $m\ge 0$
$${h_X}_*\mathcal{O}_X(m(K_X+\Delta ))\cong \mathcal{O}_Y(m(K_Y+B+J)).$$ 
\item $K_{Z}+\psi_*\Delta\sim_{\mathbb Q}  h^*(K_{ Y}+B+J)$,
\item $(Y, B)$ is dlt,  log smooth, and all strata of $T=\lfloor  B\rfloor$ intersect $Y^0$,
\item there are effective $\mathbb Q$-divisors $C$, $\Sigma$ and $A$ on $Y$ such that $K_Y+B+J\sim _{\mathbb Q, U}C+\Sigma +A$, $A$ is ample over $U$, ${\rm Supp} (\Sigma )\subset {\rm Supp } (T)$, $(Y,B+\epsilon C)$ and $(X,\Delta+\epsilon h_X^*C)$ are dlt for some $\epsilon >0$. Moreover,  we can assume $(X,\Delta+\epsilon h_X^*C)$ and $(X,\Delta)$ have the same non-klt centers, and
\item for any $0<\epsilon \ll 1$ there is a klt pair $(Y,\Theta_{\epsilon})$ where $\Theta _\epsilon \sim _{\Q, U}B+J+\epsilon (A+C)$.
\end{enumerate}
\end{lemma}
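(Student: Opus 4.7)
The plan is to build the diagram by applying the canonical bundle formula (Theorem 2.1) and then producing the minimal model over $Y$ via the Lai-type theorem (Theorem 2.15). First, I would apply Theorem 2.1 to $(X,\Delta) \to U$ to obtain $\mu : X' \to X$ and a toroidal equidimensional algebraic fibration $h' : X' \to Y_0$ with $Y_0$ smooth, satisfying $K_{X'} + \Delta' \sim_{\mathbb{Q}} h'^*(K_{Y_0} + B + J) + R$ with $R \ge 0$, together with the pushforward identity $h'_*\mathcal{O}_{X'}(dm(K_{X'}+\Delta')) = \mathcal{O}_{Y_0}(dm(K_{Y_0}+B+J))$ for $m$ sufficiently divisible, $J$ nef over $U$, $K_{Y_0}+B+J$ big over $U$, and $(Y_0, \mathrm{Supp}(B))$ log smooth. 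By Lemma 2.11 we may replace $X$ by $X'$. Resolving the indeterminacy of the natural rational map $Y_0 \dashrightarrow \bar Y$ and taking a log resolution yields a smooth $Y$ with morphisms $Y \to \bar Y$ and (after further blowing up $X$, again justified by Lemma 2.11) $h_X : X \to Y$. Combining Theorem 2.1(4) with the hypothesis that every stratum of $\lfloor \Delta \rfloor$ meets $X^0$, further blow-ups of $Y$ over $U \setminus U^0$ allow us to arrange that every stratum of $T = \lfloor B \rfloor$ meets $Y^0$; this secures (2) and (4).

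For (1) and (3), apply Theorem 2.15 to $h_X : X \to Y$. The very general fiber of $h_X$ is birational to a general fiber of the Iitaka fibration of $K_X+\Delta$, which has Kodaira dimension zero and is therefore its own good minimal model; meanwhile $R(X/Y; K_X+\Delta) = \bigoplus_m \mathcal{O}_Y(m(K_Y+B+J))$ is finitely generated as an $\mathcal{O}_Y$-algebra by (2). Theorem 2.15 produces $\psi : X \dashrightarrow Z$, a good minimal model of $(X,\Delta)$ over $Y$, with an induced morphism $h : Z \to Y$; the identity $h_*\mathcal{O}_Z(m(K_Z + \psi_*\Delta)) = h_{X*}\mathcal{O}_X(m(K_X+\Delta)) = \mathcal{O}_Y(m(K_Y+B+J))$ together with the semi-ampleness of $K_Z + \psi_*\Delta$ over $Y$ forces $K_Z + \psi_*\Delta \sim_{\mathbb{Q}} h^*(K_Y+B+J)$, giving (3).

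For (5), Kodaira's lemma applied to the big-over-$U$ divisor $K_Y+B+J$ gives $K_Y+B+J \sim_{\mathbb{Q},U} A + E$ with $A$ ample over $U$ and $E \ge 0$; split $E = C + \Sigma$ with $\mathrm{Supp}(\Sigma) \subset \mathrm{Supp}(T)$ and $C$ containing no component of $T$. Choosing $C$ general in its $\mathbb{Q}$-linear class ensures $(Y, B+\epsilon C)$ is dlt for small $\epsilon > 0$; transversality achieved by further log-resolution (performed simultaneously on $Y$ and $X$) makes $(X, \Delta + \epsilon h_X^*C)$ dlt with the same non-klt centers as $(X,\Delta)$. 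For (6), $J + \epsilon A$ is ample over $U$ (nef plus ample), hence so is $\delta T + J + \epsilon A$ for $0 < \delta \ll \epsilon$; pick a general effective $D_\epsilon \sim_{\mathbb{Q},U} \delta T + J + \epsilon A$ and set $\Theta_\epsilon := B - \delta T + \epsilon C + D_\epsilon$. Then $\Theta_\epsilon \sim_{\mathbb{Q},U} B + J + \epsilon(A+C)$, the coefficients of $\Theta_\epsilon$ are strictly less than $1$ (the $T$-part becomes $(1-\delta)T$, the fractional part of $B$ is already $<1$, and $\epsilon C + D_\epsilon$ has small coefficients by generality), and the support is log smooth, so $(Y, \Theta_\epsilon)$ is klt. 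The main technical obstacle is arranging (4) while preserving the pushforward formulas of Theorem 2.1 through the successive birational modifications of $Y$ and $X$; this bookkeeping is routine but delicate, and is handled by the functoriality of the canonical bundle formula under base change through log resolutions, together with Lemma 2.11 which permits us to freely replace $X$ by higher models.
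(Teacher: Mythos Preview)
Your approach is essentially the paper's, but there is a genuine gap in your verification of hypothesis (1) of Theorem~2.15 when you apply it to $h_X:X\to Y$. You assert that the very general fiber ``has Kodaira dimension zero and is therefore its own good minimal model.'' That implication is precisely the abundance conjecture in Kodaira dimension zero and is not available in general. The correct input, which is what the paper's citation of Lemma~2.12 at this step encodes, is the standing hypothesis that $(X^0,\Delta^0)$ has a good minimal model over $U^0$: since $h_X$ factors through $Y\to\bar Y$, the composite $X^0\to\bar Y^0=\mathrm{Proj}\,R(X^0/U^0;K_{X^0}+\Delta^0)$ is a morphism, and Lemma~2.12 then yields a good minimal model of $(X^0,\Delta^0)$ over $\bar Y^0$. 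A very general $y\in Y$ lies in $Y^0$ over a point where $Y^0\to\bar Y^0$ is an isomorphism, so $(X_y,\Delta_y)$ is a fiber of $X^0\to\bar Y^0$ and inherits a good minimal model from the relative one just produced. Once you have this, Theorem~2.15 applies exactly as you say (and since $h_X^*(C+A)\sim_{\mathbb Q,Y}0$, the paper's formulation with $K_X+\Delta+\epsilon h_X^*(C+A)$ is equivalent to yours over $Y$).

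A smaller correction in (5): you cannot ``choose $C$ general in its $\mathbb Q$-linear class,'' since $C$ is the fixed effective part coming out of Kodaira's lemma and need not move. The paper instead replaces $X$ and $Y$ by higher models so that $(Y,\mathrm{Supp}(B+C))$ is log smooth; then no non-klt center of $(Y,B)$ lies in $\mathrm{Supp}(C)$, and via the description of $B$ in Theorem~2.1(4) no non-klt center of $(X,\Delta)$ lies in $\mathrm{Supp}(h_X^*C)$, which gives the dlt statements and the preservation of non-klt centers for small $\epsilon$.
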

\begin{proof}
By \eqref{t-can},  after possibly replacing $X$ by a higher model $X'$, we obtain  $(h_X:X\to Y,B,J)$ and a positive integer $d$ such that
\begin{enumerate}
\item[$\bullet$] $(Y,B)$ is a log smooth dlt pair, $K_{{Y}}+{B}+{J}$ is big over $U$ and $J$ is nef over $U$,
\item[$\bullet$] ${h_X}_*\mathcal{O}_X(dm(K_X+\Delta ))\cong \mathcal{O}_Y(dm(K_Y+B+J))$ for all $m\ge 0$.
\end{enumerate}

Since $K_{{Y}}+{B}+{J}$ is big over $U$, 
we may write $K_{{Y}}+{B}+{J}\sim _{\Q, U}C+\Sigma +A$ 
where $A$ is ample over $U$, $C$ and $\Sigma$ are effective and have no common components and 
${\rm Supp}(\Sigma )\subset  {\rm Supp}(T)$. 
Replacing $X$ and ${Y}$ by higher models, we may further assume that $(Y,{\rm Supp }(B+C))$ is log smooth. In particular, ${\rm Supp }(C)$ does not contain any non-klt centers of $(Y,B)$. By the definition of $B$, we know that $h_X^*C$ does not contain any non-klt center of $(X,\Delta)$. Thus, if $0<\epsilon\ll 1$, then $(X,\Delta+\epsilon h_X^*(C))$ is dlt and  has the same non-klt centers as $(X,\Delta)$.
We can further assume that $({Y}, {B}+\epsilon C)$ is dlt and $J+\epsilon A$ is ample over $U$ so that $B+J+\epsilon(C+A)\sim _{\Q, U}\Theta _\epsilon$ where $(Y,\Theta _\epsilon )$ is klt.

Since, by (\ref{t-can}.3), $$R(X/Y, d(K_X+\Delta+\epsilon h_X ^*(C+A)))\cong R(Y/Y, d(K_Y+B+J+\epsilon (C+A))),$$ 
$R(X/Y,K_X+\Delta+\epsilon h_X ^*(C+A))$ is finitely generated over $Y$ and so by \eqref{l-11} and \eqref{klt} there is a good minimal model  for $K_X+\Delta$ over $Y$ which we denote by $\psi: (X,\Delta )\dashrightarrow (Z,\Delta _Z)$.
We then have $K_Z+\Delta _Z \sim _{\mathbb Q, Y}0$ thus  $K_Z+\Delta _Z \sim _{\mathbb Q} h^*(K_Y+B+J)$.  
Properties (1-6) are easily seen to hold.
\end{proof}

In particular, (\ref{c}.6) implies that, for any $0<\epsilon \ll 1$, the ring  $R(Y/U, K_Y+B+J+\epsilon(A+C))$ is finitely generated by \cite{BCHM10}. 
We next construct a  good model of $(X,\Delta+\epsilon h_X^*(A+C))$ which, after restricting over $U^0$, yields a good model of $(X,\Delta)\times_U U^0$.
\begin{lemma}\label{l-start} 
With the above notation, if $0<\epsilon\ll 1$, then after replacing $X$ by a higher model,
\begin{enumerate}\item there is a $K_Y+B+J+\epsilon (C+A)$ good minimal model over $U$ say $Y\dasharrow Y'$, 
\item $K_X+\Delta+\epsilon h_X^*(A+C)$ has a minimal model over $U$, $Z' $ equipped with a morphism $h': Z'\to Y'$, and
\item $(K_{Y'}+B'+J'+t (C'+A'))|_{{Y'}^0}$ is semi-ample over $U^0$ for $0\leq t \leq \epsilon$.
\end{enumerate}
\end{lemma}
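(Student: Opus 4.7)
The plan is to handle (1), (2), (3) in sequence, relying on Theorem~\ref{t-bchm} for (1), Corollary~\ref{c-mmp} for (2), and the fact that $(K_Y+B+J)|_{Y^0}$ is already semi-ample over $U^0$ for (3).

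First, for (1), I would apply Theorem~\ref{t-bchm} to $(Y,\Theta_\epsilon)$. By (\ref{c}.6) this pair is klt, and $\Theta_\epsilon\sim_{\Q,U}B+J+\epsilon(C+A)$ is big over $U$ thanks to the ample summand $\epsilon A$, so Theorem~\ref{t-bchm} furnishes a good minimal model $\rho:Y\dasharrow Y'$ over $U$. I would run this MMP with scaling of an ample divisor, which will be convenient in (3). Since $\Theta_\epsilon\sim_{\Q,U}B+J+\epsilon(C+A)$, the map $\rho$ is simultaneously a good minimal model for $K_Y+B+J+\epsilon(C+A)$.

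Next, for (2), I would replace $X$ by a sufficiently high log smooth model so that $\psi:X\to Z$ becomes a morphism (and therefore $h_X=h\circ\psi$), and replace $A$ by a general effective $\Q$-linearly equivalent divisor so that $(Z,\psi_*\Delta+\epsilon h^*(A+C))$ is dlt. Combining (\ref{c}.3) with this I obtain $K_Z+\psi_*\Delta+\epsilon h^*(A+C)\sim_{\Q,U}h^*(K_Y+\Theta_\epsilon)$. Applying Corollary~\ref{c-mmp} step by step along the MMP of (1) lifts each flip or divisorial contraction to a $(K_Z+\psi_*\Delta+\epsilon h^*(A+C))$-negative birational contraction on $Z$, and at the end I obtain $Z\dasharrow Z'$ together with a morphism $h':Z'\to Y'$ such that $K_{Z'}+\Delta_{Z'}\sim_{\Q,U}h'^*(K_{Y'}+\Theta_\epsilon')$ is semi-ample over $U$. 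The $\psi$-exceptional divisors lie in the fixed part of $K_X+\Delta$ over $Y$, and since $h_X^*(A+C)$ is pulled back from $Y$ they also lie in the fixed part of $K_X+\Delta+\epsilon h_X^*(A+C)$ over $Y$; hence the composite $X\dasharrow Z'$ is $(K_X+\Delta+\epsilon h_X^*(A+C))$-negative and yields the required minimal model.

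Finally, for (3), the case $t=\epsilon$ is immediate from (1), and $0<t<\epsilon$ follows by positive convex combination once both endpoints are known to be semi-ample, so the key case is $t=0$. Over $U^0$ the morphism $h_X|_{X^0}$ factors through a good minimal model of $(X^0,\Delta^0)$, so $(K_Y+B+J)|_{Y^0}$ descends to an ample class on $\bar Y^0={\rm Proj}\,R(X^0/U^0,K_{X^0}+\Delta^0)$ and is in particular semi-ample over $U^0$. My plan is to show that the MMP in (1) is $(K_Y+B+J)$-trivial over $U^0$: if a contracted extremal ray meets $Y^0$, it is spanned by a curve $\gamma$ with $(K_Y+\Theta_\epsilon)\cdot\gamma<0$ while $(K_Y+B+J)\cdot\gamma\ge0$, forcing $(A+C)\cdot\gamma<0$; combining the ampleness of $A$ over $U$ with Kawamata's bound on the lengths of extremal rays, a sufficiently small choice of $\epsilon$ rules out any $\gamma$ with $(K_Y+B+J)\cdot\gamma>0$. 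Iterating, $(K_{Y'}+B'+J')|_{{Y'}^0}$ inherits semi-ampleness over $U^0$, giving (3). The hardest step is indeed (3): the perturbation $\epsilon(A+C)$ in $\Theta_\epsilon$ could a priori produce extremal rays inside $Y^0$ that are not $(K_Y+B+J)$-trivial, and ruling these out will require carefully coupling the smallness of $\epsilon$ with the boundedness of extremal-curve degrees, likely via the MMP-with-scaling and the openness of the ample cone.
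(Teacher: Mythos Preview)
Your treatment of (1) and (2) is essentially fine and matches the paper's overall mechanism (Theorem~\ref{t-bchm} for the base, Corollary~\ref{c-mmp} to lift to $Z$). The gap is in (3).

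Your plan for (3) is to run the $(K_Y+\Theta_\epsilon)$-MMP directly over $U$ and then argue, via an extremal-ray/Kawamata-length computation, that each step is $(K_Y+B+J)$-trivial over $U^0$. This does not close. Even if one grants that $(K_Y+B+J)|_{Y^0}$ is semi-ample over $U^0$ (a claim that already needs justification, since $Y$ has been replaced by a higher model over $\bar Y$ and $(K_Y+B+J)|_{Y^0}$ need not a priori be the pullback of the ample class on $\bar Y^0$), your inequality only yields $(C+A)\cdot\gamma<0$, and there is no available bound on $|(C+A)\cdot\gamma|$ to play off against Kawamata's length bound. Worse, $\epsilon$ must be fixed before the MMP is run, whereas any bound you could hope for would involve Cartier indices that change along the MMP; so ``iterating'' does not work.

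The paper handles (3) by a different route: it first runs the MMP over $\bar Y$ (not over $U$) to produce an intermediate model $Y''$, together with its lift $Z''$ via Corollary~\ref{c-mmp}. The point of working over $\bar Y$ is that $\bar Y^0={\rm Proj}\,R(X^0/U^0,K_{X^0}+\Delta^0)$ is precisely the target of the semi-ample model of $(X^0,\Delta^0)$; this is exactly the setting of Lemma~\ref{l-rel}, which (applied on $Z^0$ with the perturbation $\epsilon h^*(A+C)$) shows that, after shrinking $\epsilon$, $(K_{Z''}+\Delta_{Z''}+t\,h^*(A+C))|_{{Z''}^0}$ is nef over $U^0$ for all $0\le t\le\epsilon$, and hence the same holds for $(K_{Y''}+B''+J''+t(C''+A''))|_{{Y''}^0}$. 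Only then does the paper run a second MMP, this time from $Y''$ over $U$; since over $U^0$ the divisor is already nef, that second MMP is an isomorphism over $U^0$, giving $Y'^0\cong Y''^0$ and hence (3). The two-step structure---MMP over $\bar Y$ followed by MMP over $U$---is what makes Lemma~\ref{l-rel} applicable and is the key idea missing from your proposal.
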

\begin{proof}

 Since $ K_Y+B+J+\epsilon(A+C)\sim_{\mathbb{Q},U}K_Y+\Theta _\epsilon$, $(Y, \Theta _\epsilon)$ is klt and $Y\to \bar Y$ is birational, by \eqref{t-bchm},
 there is a minimal model of $K_{Y}+{B}+{J}+\epsilon (C+A)$  over $\bar Y$ say $\eta:Y\dasharrow {Y}''$. 
By \eqref{c-mmp}, we know that there is  a birational contraction $\mu:Z\dasharrow Z''$ and a morphism $h'':Z''\to Y''$ such that 
$$\mu_*(K_Z+\Delta_Z+\epsilon(A+C))\sim_{\mathbb{Q},U} h''^*(\eta_*(K_Y+\Theta_{\epsilon})).$$ 
Since $\psi$ is $(K_X+\Delta )$-negative and $\mu$ is $(K_Z+\Delta_Z+\epsilon h^*(A+C))$-negative,  we see that if $0<\epsilon \ll 1$, $\mu \circ \psi$ is $(K_X+\Delta +\epsilon h_X^*(C+A))$-negative, then $X\dasharrow Z''$ is a good model for $K_X+\Delta +\epsilon h_X^*(C+A)$ over $Y''$.

We note that  $Z''$ is  a minimal model of $(Z, \Delta_Z+\epsilon h^*(C+A))$ over $\bar Y$. 
Since $\bar{Y}^0={\rm Proj}R(X^0/U^0;K_{X^0}+\Delta ^0)$,  applying \eqref{l-rel} to the good model $Z^0$ of $(X^0,\Delta^0)$ over $U^0$,  we obtain that for $0< t \ll \epsilon$, $K_{Z''^0}+\mu_*(\Delta^0_{Z}+t h^*(A^0+C^0))$ is nef over $U^0$, i.e., $Z''^0$  is indeed a minimal model of $(X, \Delta+t h^*(C+A))|_{X^0}$ over $U^0$. Hence $Y''^0$ is a minimal model of $(Y, \Theta _t\sim _{\Q, U}B+J+t(C+A))|_{U^0}$ over $U^0$ for any $0< t\ll \epsilon$. When $t=0$, as $Z''^0$ is a weak log canonical model of $(X,\Delta)|_{U_0}$, we also know that $\eta_*(K_{Y}+B+J)|_{Y''^0}$ is the pull back of a relatively ample divisor on $\bar{Y}^0$ over $U^0$. Thus replacing $\epsilon$ by a sufficiently small one, we may assume that $(K_{Y''}+\eta_*\Theta_t)|_{Y''^0}$ is nef for $0\le t \le \epsilon$.   

Running the $(K_{Y''}+\eta _*\Theta _{\epsilon})$-MMP with scaling of an ample divisor over $U$, we obtain a
$(K_Y+\Theta _{\epsilon})$-minimal model over $U$ say $Y\dasharrow Y'$. Since over $U^0$, $Y'^0$ is  isomorphic to $Y''^0$, this implies that
$(K_{Y'}+B'+J'+t(A'+C'))|_{Y'^0}$ is semiample for $0\le t\le \epsilon$. Now it is easy to see that $Y'$ satisfies (1) and (3).

We apply \eqref{c-mmp} to $Z''\to Y''$ and the sequence of flips and divisorial contractions $Y''\dasharrow Y'$ and we obtain a birational contraction $Z''\dasharrow Z'$ and a morphism $Z'\to Y'$. Then it is easy to see that $Z'$ satisfies (2).
\end{proof}

\begin{lemma}\label{c-0} Let $N_{\epsilon }:=N_\sigma (K_{ {Y}}+ {B}+ {J}+\epsilon (C+A)/U)$. We may assume that 
 $${\rm Supp}(N_{\epsilon '})={\rm Supp}(N_{\epsilon })\qquad {\rm for\ all} \ 0<\epsilon '\leq \epsilon.$$ 
 \end{lemma}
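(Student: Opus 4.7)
The plan is to reduce to a finite combinatorial problem on finitely many prime divisors and then invoke convexity. Writing $D_\epsilon := K_{Y}+B+J+\epsilon(C+A)$, I would first observe that $D_\epsilon$ is big over $U$ for every $\epsilon\ge 0$, because $K_{Y}+B+J$ is already big over $U$ by (\ref{c}.2) and $C+A$ is effective. For each prime divisor $\Gamma$ on $Y$, set $\phi_\Gamma(\epsilon):=\sigma_\Gamma(D_\epsilon/U)$. Standard properties of $\sigma_\Gamma$---continuity on the relative big cone (\cite[III.1.7, III.4]{Nakayama04}), subadditivity, and positive homogeneity---then yield that $\phi_\Gamma\colon [0,\infty)\to \R_{\ge 0}$ is continuous and convex.

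Next I would carry out a finiteness reduction using convexity. Since
\[
\phi_\Gamma(\epsilon)\leq(1-\epsilon)\phi_\Gamma(0)+\epsilon\phi_\Gamma(1)\qquad(\epsilon\in[0,1]),
\]
if $\phi_\Gamma(\epsilon)>0$ for some $\epsilon\in(0,1]$ then at least one of $\phi_\Gamma(0)$, $\phi_\Gamma(1)$ is positive. Therefore the set $\mathcal S$ of prime divisors that appear in ${\rm Supp}(N_{\epsilon'})$ for some $\epsilon'\in(0,1]$ is contained in the finite set ${\rm Supp}(N_0)\cup{\rm Supp}(N_1)$.

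Finally, for each $\Gamma\in\mathcal S$, the subset $\phi_\Gamma^{-1}(0)\cap[0,1]$ is a closed interval, being the zero locus of a non-negative continuous convex function on a closed interval. Consequently, on some right-neighbourhood $(0,\delta_\Gamma]$ of $0$ the sign of $\phi_\Gamma$ is constant: either $\Gamma$ appears in ${\rm Supp}(N_{\epsilon'})$ for every $\epsilon'\in(0,\delta_\Gamma]$, or for no such $\epsilon'$. Setting $\epsilon:=\min_{\Gamma\in\mathcal S}\delta_\Gamma>0$ produces the required stabilization. The one point meriting care is checking that the relative version $\sigma_\Gamma(-/U)$ really does enjoy continuity and convexity on the relative big cone; this is routine given the framework of \cite[III.4]{Nakayama04} and the remarks of \S 2, so the argument is short overall rather than hinging on a genuinely hard step.
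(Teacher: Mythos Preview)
Your argument is correct and rests on the same two ingredients as the paper's proof: a finiteness reduction (only finitely many prime divisors can occur in any $N_{\epsilon'}$) and convexity of $\epsilon\mapsto\sigma_\Gamma(D_\epsilon/U)$. The execution differs slightly. You invoke continuity of $\sigma_\Gamma$ on the relative big cone and read off directly that the zero set of each $\phi_\Gamma$ on $[0,1]$ is a closed interval, hence the sign is constant on a right-neighbourhood of $0$. The paper instead argues without appealing to continuity: it bounds ${\rm Supp}(N_{\epsilon'})\subset{\rm Supp}(N_\sigma(K_Y+B+J/U))\cup{\rm Supp}(C+A)$ (via $\sigma_\Gamma(D+E)\le\sigma_\Gamma(D)+{\rm mult}_\Gamma E$ for $E\ge0$), then by pigeonhole extracts a sequence $\epsilon_j\downarrow 0$ with constant support, and finally uses convexity twice---once to trap ${\rm Supp}(N_{\epsilon'})$ inside the common support for intermediate $\epsilon'$, and once (contrapositively) to rule out strict containment along any sequence tending to $0$.

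Your version is cleaner to read; the paper's version has the virtue of not needing to check that continuity of $\sigma_\Gamma(\cdot/U)$ on the relative big cone carries over from \cite[III.1.7]{Nakayama04}, which is the one point you flag as needing care. Since all $D_\epsilon$ here are big over $U$ (indeed $D_0=K_Y+B+J$ is already big by (\ref{c}.2)) this is indeed routine, so there is no gap.
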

\begin{proof} Note that
if $\eta: {Y}\dasharrow  {Y}_{\epsilon'}$ is a minimal model of $K_{Y}+  {B}+ {J}+\epsilon '(C+A)$ over $U$, then 
the support of $N_{\epsilon '}$ equals the set of $\eta$-exceptional divisors.

Since ${\rm Supp}(N_{\epsilon '})\subset {\rm Supp}(N_\sigma (K_{ {Y}}+ {B}+ {J}/U)+C+A)$, we may find a 
decreasing sequence $\epsilon _j$ with $\lim \epsilon _j=0$ such that
 $N_{\epsilon _j}$ is fixed (independent of $j$). Replacing $\epsilon$ by $\epsilon _1$, we may assume that $N_{\epsilon _i}=N_\epsilon$.
 
 Suppose that  $\epsilon _i>\epsilon '>\epsilon _{i+1}$, then by convexity of $N_\sigma$, we have ${\rm Supp}(N_{\epsilon '} ) \subset {\rm Supp}(N_{\epsilon_i} +N_{\epsilon_{i+1}})={\rm Supp}(N_\epsilon)$.
Suppose that the above inclusion is strict for an infinite decreasing sequence $\epsilon '_i$ with limit $0$.
Passing to a subsequence, we may assume that there is a component $M$ of the support of $N_{\epsilon }$ which is not contained in $N_{\epsilon '_i}$. But then we may find $\epsilon '_i>\epsilon _j >\epsilon '_k$ 
and by convexity of $N_\sigma$,  $M$ is not a component of ${\rm Supp}(N_{\epsilon _j})={\rm Supp}(N_\epsilon)$. This is a contradiction and so ${\rm Supp}(N_{\epsilon '} ) ={\rm Supp}(N_\epsilon)$.
\end{proof}

We now run a $(K_{{Y}'}+{B}'+{J}')$-MMP over $U$ with scaling of $\epsilon (C'+A')$ (see \eqref{r-mmp2} below). Then there is a decreasing sequence $\epsilon =s_0\geq s_1\geq s_2\geq \cdots >0$ such that 
$K_{{Y}_i'}+{B}_i'+{J}_i'+s(C'_i+A'_i)$ is nef over $U$ for all $s_i\geq s\geq s_{i+1}$ where $s_0=\epsilon$ and either the sequence is finite in which case we let $s_{N+1}=0$ or the sequence is infinite and we have $\lim s_i=0$. 
Here, for any divisor $G$ on $Y$, we let $G'$ (resp. $G'_i$) be the strict transform of $G$ on $Y'$ (resp. on $Y'_i$).
Note that by (\ref{l-start}.3), each rational map $Y'_i\dasharrow Y'_{i+1}$ restricts to an isomorphism over $U^0$. By the above lemma,  we know that for any $i>0$, the rational map $ {Y}'\dasharrow  {Y}'_{i}$ is an isomorphism in codimension $1$.

\begin {rmk}\label{r-mmp2} Since $( Y,  B+\epsilon (C+A))$ is dlt, $A$ is ample  over $U$ and $J$ is nef over $U$, for any $\epsilon> t>0$ we may pick an $\mathbb R$-divisor $\Theta _t \sim _{\mathbb R, U} B+ J+t(C+A)$ such that $( Y,\Theta _t+(\epsilon -t)(C+A))$ is klt. It follows that  $(Y',\Theta _t'+(\epsilon -t)(C'+A'))$ is also klt. 
We then fix a decreasing sequence $t_i$ with limit $0$ and we run the $(K_{ Y'}+\Theta _{t_1}')$-MMP over $U$ with scaling of $(\epsilon -t_1)(C'+A')$. In this way we obtain a $(K_{{Y}'}+{B}'+{J}'+{t_1}(C'+A'))$-MMP over $U$ with scaling of $(\epsilon-t_1) (C'+A')$ say $ Y'\dasharrow Y'_1\dasharrow \ldots \dasharrow  Y'_{i_1}$ where $s_{i_1}\geq t_1>s_{i_1+1}$. Note that $(Y'_{i_1},\Theta _{t_2}+(t_1 -t_2)(C'_{i_1}+A'_{i_1}))$ is also klt and therefore we may run the $(K_{Y'_{i_1}}+\Theta _{t_2})$-MMP over $U$ with scaling of $(t_1 -t_2)(C'_{i_1}+A'_{i_1})$. Repeating the above procedure we obtain the required MMP with scaling over $U$.
\end{rmk}
 
 The following proposition is the main technical result of this section.
 
 \begin{proposition}\label{p-term}
 This MMP with scaling terminates.\end{proposition}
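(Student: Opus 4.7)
The plan is to show termination in three steps, combining the structural constraints from the earlier lemmas with Shokurov's special termination and termination of a klt MMP with scaling on a big klt pair.

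First, I would show that every map in the MMP (after the initial birational modification $Y\dashrightarrow Y'$) is a flip that is an isomorphism over $U^0$. By \eqref{c-0}, $\supp(N_{\epsilon'})$ is independent of $\epsilon'\in(0,\epsilon]$, so for every $i\ge 1$ the rational map $Y'\dashrightarrow Y'_i$ is an isomorphism in codimension one; in particular each step $Y'_i\dashrightarrow Y'_{i+1}$ is a flip. By (\ref{l-start}.3), $(K_{Y'}+B'+J'+t(C'+A'))|_{{Y'}^0}$ is semiample over $U^0$ for every $0\le t\le\epsilon$, so each such flip is trivial over $U^0$, and all flipping loci lie in the preimage of $U\setminus U^0$.

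Second, I would apply Shokurov's special termination to reduce to the case where the flipping loci avoid $T=\lfloor B\rfloor$. By \eqref{r-mmp2}, the MMP is (after reorganization) a klt MMP on a perturbation $(Y,\Theta_{t_j})$ of $(Y,B+J)$, with scaling of $(\epsilon-t_j)(C+A)$, for a decreasing sequence $t_j\to 0$. Restricting the MMP to each stratum $V$ of $T$ via adjunction produces a dlt pair $(V,\Delta_V)$ of dimension at most $n-1$ together with a sequence of birational maps on $V$ compatible with $K_V+\Delta_V$. Every stratum of $T$ meets $U^0$ (by (\ref{c}.4)), and $(K_Y+B+J)|_{Y^0}$ is semiample over $U^0$ (it is pulled back from an ample class on $\bar Y^0$), so the restriction $(K_V+\Delta_V)|_{V^0}$ is semiample over $U^0$; hence $(V^0,\Delta_V|_{V^0})$ admits the identity as a good minimal model over $U^0$. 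The inductive hypothesis \eqref{t-lccl}$_{n-1}$ then yields a good minimal model of $(V,\Delta_V)$ over $U$, and \eqref{dltmmp1} implies termination of any MMP with scaling on $(V,\Delta_V)$. Shokurov's special termination now forces the flipping loci to be disjoint from $T$ after finitely many flips.

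Finally, once the flipping loci no longer meet $T$, the tail of the MMP lies in the klt locus of the pair. Writing $(\epsilon-t_j)(C+A)=(\epsilon-t_j)C+(\epsilon-t_j)A$ and absorbing the effective part $(\epsilon-t_j)C$ into the klt boundary $\Theta_{t_j}$, the remaining MMP becomes a $(K+\Theta')$-MMP with scaling of the ample divisor $(\epsilon-t_j)A$ on a big klt pair, which terminates by \cite[1.4.2]{BCHM10}. The main difficulty is Step 2: one must verify carefully that the restriction of the dlt MMP with scaling to each stratum of $T$ genuinely constitutes an MMP with scaling on $(V,\Delta_V)$ to which the inductive hypothesis applies, which is the standard but technically delicate content of Shokurov's special termination formalism.
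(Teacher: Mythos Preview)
Your overall strategy---reduce to special termination near $T=\lfloor B\rfloor$ and then finish in the klt locus---is the right intuition, but Step 2 has a genuine gap, and Step 3 is in fact unnecessary.

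The gap in Step 2 is that $(Y,B+J)$ is not an honest log pair. The moduli part $J$ is only a $g$-nef $\mathbb{Q}$-line bundle, not an effective divisor, so you cannot perform adjunction to a stratum $V\subset T$ and obtain a dlt pair $(V,\Delta_V)$ to which \eqref{t-lccl}$_{n-1}$ applies. (If instead you restrict the klt pair $(Y,\Theta_t)$ of \eqref{r-mmp2}, the boundary has no reduced part and special termination is vacuous.) This is exactly why the paper does not run special termination on $Y$: it lifts the whole MMP to the model $Z$, where $K_Z+\Delta_Z\sim_{\mathbb Q} h^*(K_Y+B+J)$ is a genuine dlt pair. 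Concretely, \eqref{c-z} produces a parallel sequence $Z'_i\to Y'_i$, and one works with a component $G$ of $\lfloor\Delta\rfloor$ and its strict transforms $G'_i\subset Z'_i$. The inductive hypothesis \eqref{t-lccl}$_{n-1}$ is applied to the honest dlt pair $(G^\nu_{i_0},\Delta_{G^\nu_{i_0}})$, and a finite-generation argument (Lemmas \ref{l-rg} and \ref{l-ismc}) shows that $Z'_i\dasharrow Z'_{i+1}$ is eventually an isomorphism near $\lfloor\Delta_{Z'_i}\rfloor$; this then descends to the statement that $Y'_i\dasharrow Y'_{i+1}$ is eventually an isomorphism near $T'_i$.

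On Step 3: once the flips are isomorphisms near $T'_i$, you are already done---no further klt termination is needed. Since $K_Y+B+J\sim_{\mathbb Q,U}C+\Sigma+A$ with $\mathrm{Supp}(\Sigma)\subset T$ and $A$ ample, any flipping curve $\Gamma_i$ satisfies $(C'_i+A'_i)\cdot\Gamma_i>0$ and $(K_{Y'_i}+B'_i+J'_i)\cdot\Gamma_i<0$, forcing $\Sigma'_i\cdot\Gamma_i<0$; hence every flipping locus meets $T'_i$, a contradiction. Your attempt to recast the tail as a klt MMP with scaling of an ample divisor is also problematic on its own, since the scaling divisor is $C+A$ with $C$ merely effective, and the reorganization in \eqref{r-mmp2} does not convert this into scaling by an ample divisor alone.
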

\begin{proof} We assume that we have an infinite sequence as above and we deduce a contradiction.

Let $\Gamma _i\subset Y'_i$ be a flipping curve, then $(C_i'+A_i')\cdot \Gamma _i>0$ and $(K_{ {Y}_i'}+ {J}'_i+ {B}'_i)\cdot \Gamma _i<0$ so that $\Sigma'_i\cdot  \Gamma _i<0$. In particular, the flipping locus always intersects $T'_i=\lfloor   B _i'\rfloor$. 

\begin{lemma}\label{c-z} There exists a sequence of birational contractions $  Z \dasharrow   Z'=  Z '_0\dasharrow   Z_1'\dasharrow   Z _2'\dasharrow \ldots$ 
and morphisms $h_i:  Z'_i\to   Y'_i$ such that \begin{enumerate}
\item $K_{  Z '_i}+\Delta _{  Z _i'}\sim_{\Q} h_i^*(K_{  Y'_i}+  B '_i+  J'_i)$, 
\item $Z\dasharrow Z'_i$ is a good minimal model of $K_{  Z}+\Delta _{  Z}+  s_ih^*(C+A)$  over $U$, and 
\item $Z_i'\dasharrow   Z _{i+1}'$ is an isomorphism over $U^0$.
\end{enumerate} 
\end{lemma}
\begin{proof} Let $Z_0'=Z'$.
Proceeding by induction, we assume that we are given $h_i:(  Z_i',\Delta _{  Z _i'}+s_ih_i^*(C'_i+A'_i))\to   Y'_i$ as in the above claim.

We now apply \eqref{c-mmp} to $Z'_i\to Y'_i$ and obtain $Z'_{i+1}\to Y'_{i+1}$ which satisfies (1) and (2). Since $Z'_{i+1}$ in \eqref{c-mmp} can be assumed to be obtained by running MMP with scaling (cf \eqref{dltmmp1}) over $W$, we know that (3) also holds.  
\end{proof}

For all $i\gg 0$, we may assume that $Z_i'\dasharrow Z_{i+1}'$ are isomorphisms in codimension $1$.
Fix a component $G$ of $S=\lfloor \Delta \rfloor$ which is not contracted by $X\dasharrow Z_i$ and let $G_i'$ be its strict transform on $Z'_i$. 
Denote by $\Delta_{G '_i}={\rm Diff}_{G '_i}(\Delta_{  Z _i'}-G'_i)$. 
By a discrepancy computation (cf. the arguments in the proofs of Step 1 and Step 2 of \cite[4.2.1]{Corti07}), we may assume that the induced rational maps $G'_i\dasharrow G'_{i+1}$ are isomorphisms in codimension $1$ for any $i\ge i_0$.  Let $\nu _i:G_i^\nu\to G_i'$ be  a $\Q$-factoralization.
Therefore, if we let $\Phi_i=h_i^*(A'_i+C'_i)|_{G^{\nu}_i}$, then for any $i\geq i_0$, $G^{\nu}_i$ is a minimal model of $(G'_{i_0}, \Delta_{G'_{i_0}}+s_i\Phi_{i_0})$. In particular $N_\sigma (K_{G'_{i_0}}+\Delta_{G'_{i_0}}+s_i\Phi_{i_0}/U)=0$ for $i\ge i_0$.
By semicontinuity of $N_\sigma (\cdot /U)$, we have $N_\sigma (K_{G'_{i_0}}+\Delta_{G'_{i_0}}/U)=0$.

Since we are assuming \eqref{t-lccl}$_{n-1}$, we may assume that each $K_{G^\nu _{i_0}}+\Delta_{G ^\nu _{i_0}}=\nu _{i_0}^*(K_{G'_{i_0}}+\Delta_{G '_{i_0}})$ has a good minimal model $G^{\nu}_{i_0}\dasharrow\bar{G}$ over $U$ which is also a good minimal model over $U$ for $K_{G'_{i_0}}+\Delta_{G '_{i_0}}$ (and in fact for any $K_{G'_i}+\Delta_{G '_i}$ where $i\geq i_0$).  There is a morphism $\chi:\bar{G}\to \bar{D}={\rm Proj}(\bar{G}/U,K_{\bar{G}}+\Delta_{\bar{G}})$. Since $N_\sigma (K_{G'_{i_0}}+\Delta_{G'_{i_0}}/U)=0$,
it follows that $G^{\nu}_i\dasharrow \bar{G}$ is  an isomorphism in codimension 1 for $i\ge i_0$.  
Note that $(K_{G'_i}+\Delta_{G '_i})|_{G'_i\times _U U^0}$ is nef over $U^0$.
By  \eqref{dltmmp1}, we can assume that $G_i^\nu \dasharrow \bar G$ is obtained by a sequence of $(K_{G^{\nu}_i}+\Delta_{G^{\nu}_i})$-flips. 
In particular we can assume that $G_i^\nu \dasharrow \bar G$  is an isomorphism over $U^0$. 
 This implies that it is an isomorphism at the generic point of each non-klt center of $(\bar G,\Delta_{\bar{G}})$ (cf. \cite[3.10.11]{BCHM10}), where $\Delta_{\bar{G}}$ is the push-forward of $\Delta_{G_i'}$ on $\bar{G}$. Let $\bar{\Phi}$ be the push-forward of  $\Phi_i$ to $\bar{G}$. 
 Therefore, there exists a rational number $0<t_0 \ll  1$ such that $(\bar{G},\Delta_{\bar{G}}+t_0\bar\Phi)$ is dlt with the same non-klt centers as $(\bar{G},\Delta_{\bar{G}}).$ 

\begin{lemma}\label{l-rg} There exists a good minimal model $\bar{G}^m$ of $(\bar{G},\Delta_{\bar{G}}+t_0\bar{\Phi})$ over $\bar{D}$.
\end{lemma}
\begin{proof} By our assumption the images of the non-klt centers of $(\bar{G},\Delta_{\bar{G}})$ intersect $U^0$.
Thus all non-klt centers of $(\bar{G},\Delta_{\bar{G}}+t_0\bar{\Phi})$ intersect $\bar{G}^0=\bar{G}\times _U U^0$.
$(K_{{G'_i}}+\Delta_{{G'_i}}+t_0{\Phi}_i)$ is semi-ample over $U$ and hence so is $(K_{\bar{G}}+\Delta_{\bar{G}}+t_0{\bar{\Phi}})|_{\bar{G}^0}$ over $U^0$.
 Therefore, $(K_{\bar{G}}+\Delta_{\bar{G}}+t_0{\bar{\Phi}})|_{\bar{G}^0}$ is semi-ample over $\bar{D}^0$.
 The claim is now immediate from 
\eqref{t-lccl}$_{n-1}$.
\end{proof}
By \eqref{l-rel}, after possibly replacing $t_0$ by a smaller positive number, we can assume $\bar{G}\dasharrow \bar{G}^m$ is a minimal model of $(\bar{G},\Delta_{\bar{G}}+t_0\bar{\Phi})$  over $U$. By \eqref{t-lccl}$_{n-1}$ this minimal model is good.
It follows that $K_{\bar{G}^m}+\Delta_{\bar{G}^m}+t\bar{\Phi}^m$ is semi-ample over $U$
for all $0\leq t \leq t_0$ and so 
$$R(\bar{G}^m/U; K_{\bar{G}^m}+{\Delta}_{\bar{G}^m},K_{\bar{G}^m}+\Delta_{\bar{G}^m}+t_0\bar{\Phi}^m)$$
is finitely generated.
Since $\bar{G}\dasharrow \bar{G}^m$ is $K_{\bar{G}}+\Delta_{\bar{G}}+t\bar{\Phi}$ non-positive for $0\leq t \leq t_0$, 
$R(\bar{G}/U; K_{\bar{G}}+{\Delta}_{\bar{G}},K_{\bar{G}}+\Delta_{\bar{G}}+t_0\bar{\Phi})$
is also finitely generated.

\begin{lemma}\label{l-ismc} There exists an $i_0\gg 0$, such that the rational maps $Z'_i\dasharrow Z'_{i+1}$ are isomorphisms on a neighborhood of $G'_i$ for all $i\geq i_0$.
\end{lemma}
\begin{proof}
Let $\Gamma$ be a geometric valuation over $\bar G$ and $D$ be a $\Q$-divisor on $\bar G$ such that $\kappa (D/U)\geq 0$. Then we let $\sigma _\Gamma (D)={\rm inf}\{ {\rm mult}_\Gamma (D')|D\sim _{\Q,U} D'\geq 0 \}$. So we have
$$\sigma _\Gamma (K_{\bar G^m}+\Delta_{\bar G^m }+t\bar \Phi^m/U)=0\qquad \mbox{ for all } 0\leq t\leq t_0.$$
Note that $\bar G^m$ is isomorphic to $G^{\nu}_i $ in codimension 1 for any $i\gg 0$.
So if we assume $s_i<t_0$ for $i\ge i_0$, then $\sigma _\Gamma(\cdot /U) $ is linear on the cone $\mathcal C _i\subset {\rm Div }_{\R}(G^\nu _i)$ spanned by $\nu _i^*(K_{G'_i}+\Delta_{G '_i})$ and $\nu _i^*(K_{G'_i}+\Delta_{G '_i}+t_0 \Phi_i)$ (see e.g., \cite[5.2]{CL10}). 
Since $Z_i'\dasharrow Z_{i+1}'$ is a sequence of $(K_{Z_{i}'}+\Delta_{Z_{i}'})$-flips over $U$ which is
$(K_{Z_{i}'}+\Delta_{Z_{i}'} +s_ih_i^*(C_i+A_i))$-trivial, if $Z_i'\dasharrow Z_{i+1}'$ is not an isomorphism on a neighborhood of $G_i'$, then there exists a 
divisor $E$ over $G'_i$ such that for any rational number $0<\delta <s_i$, the discrepancies satisfy 
$$a(E;G'_i,K_{G'_i}+\Delta_{G '_i}+(s_i-\delta)\Phi_i)<a(E;G'_{i+1},K_{G'_{i+1}}+\Delta_{G '_{i+1}}+(s_i-\delta)\Phi_{i+1}).$$
This implies that $\sigma _E(K_{G'_i}+\Delta_{G '_i}+(s_i-\delta)\Phi_i/U)>0$. 
Since $\sigma _E(K_{G'_i}+\Delta_{G '_i}+s_i\Phi_i/U)=0$, this is a contradiction to the fact that
$\sigma _E(\cdot /U)$ is linear nonnegative on $\mathcal C _i$.
\end{proof}

Thus we may assume that each $  Z '_i\dasharrow   Z'_{i+1}$ is an isomorphism on a neighborhood of $\lfloor \Delta_{  Z _i'}\rfloor$ for $i\ge i_0$. 

Any component $\Xi_i$ of $\lfloor B_i' \rfloor$ is the birational image of a component $\Xi$ of $\lfloor B \rfloor$ on $Y$ which is not in $N_{\sigma}(K_Y+B+J+s_i(A+C)/U)$. By the calculation of Kawamata subadjunction (cf. \eqref{t-can}), we know that $\Xi$ is dominated by a component $G$ contained in $\lfloor \Delta_Z \rfloor$. Furthermore, since 
$$h^*(K_Y+B+J+s_i (A+C))\sim_{\mathbb{Q}} K_Z+\Delta_Z+s_i h^*(A+C),$$
we know that $G$ is not contained in $N_{ \sigma}(K_Z+\Delta_Z+s_i h^*(A+C)/U)$. This implies that it is not contracted by $Z\dasharrow Z_i'$. We denote it by $G_i'$.
Because the pull back of $K_{Y'_i}+B_i'+J_i'+(s_{i}-\delta)(A'_i+C'_i)$ to $G'_i $ is  $K_{G'_i}+\Delta_{G'_i}+(s_{i}-\delta)\Phi'_i$ which is nef for $G_i'\subset \lfloor \Delta_{Z_i'} \rfloor$ and any $\delta<s_{i}$,  then it follows that each $  Y'_i\dasharrow   Y'_{i+1}$  is an isomorphism on a neighborhood of $\Xi_i$. Since $\Xi_i$ could be any component of $\lfloor B_i \rfloor $, this is impossible and hence the given $(K_{  Y'}+  B'+ J')$-MMP with scaling over $U$ terminates. This finishes the proof of \eqref{p-term}.
\end{proof}
\begin{corollary}The rational map $  Z\dasharrow   Z'_N$ is $(K_{  Z}+\Delta _Z)$-non-positive and so $$R(X/U,K_X+\Delta )\cong R(Z'_N/U, K_{  Z'_N}+\Delta _{Z'_N}).$$\end{corollary}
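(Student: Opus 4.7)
The plan is to trace the construction of $Z\dasharrow Z'_N$ through the various applications of \eqref{c-mmp} and observe that each intermediate step is $(K+\Delta)$-negative, so that the composition is $(K_Z+\Delta_Z)$-non-positive; the ring isomorphism will then follow from a standard property of non-positive birational contractions.

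First I will unwind how the composite map $Z\dasharrow Z'_N$ is built. The map $Z\dasharrow Z'=Z'_0$ is produced in the proof of \eqref{l-start} by applying \eqref{c-mmp} in succession to the sequence of flips and divisorial contractions $Y\dasharrow Y''\dasharrow Y'$. Thereafter, each step $Y'_i\dasharrow Y'_{i+1}$ of the $(K_{Y'}+B'+J')$-MMP with scaling over $U$ induces, again via \eqref{c-mmp}, a birational contraction $Z'_i\dasharrow Z'_{i+1}$ together with a morphism $h_{i+1}:Z'_{i+1}\to Y'_{i+1}$, as recorded in \eqref{c-z}. Crucially, \eqref{c-mmp} asserts that each such induced step is itself a $(K_{Z'_i}+\Delta_{Z'_i})$-negative birational contraction, and the same applies to the initial step $Z\dasharrow Z'$.

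Second, I will combine these steps to obtain non-positivity of the composite. A composition of $(K+\Delta)$-non-positive birational contractions is itself $(K+\Delta)$-non-positive: on a common resolution of the whole tower, the individual relations $p_i^*(K_{Z'_i}+\Delta_{Z'_i})=q_i^*(K_{Z'_{i+1}}+\Delta_{Z'_{i+1}})+E_i$ with $E_i\ge 0$ and $q_i$-exceptional can be pulled back to a single model and summed, yielding a relation $p^*(K_Z+\Delta_Z)=q^*(K_{Z'_N}+\Delta_{Z'_N})+E$ with $E\ge 0$ and $q$-exceptional. Since by \eqref{p-term} the sequence terminates at stage $N$, this shows that $Z\dasharrow Z'_N$ is $(K_Z+\Delta_Z)$-non-positive.

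Finally, I will deduce the ring isomorphism. For any $(K+\Delta)$-non-positive birational contraction $\mu:Z_1\dasharrow Z_2$ over $U$, taking a common resolution $p:W\to Z_1$, $q:W\to Z_2$ with $p^*(K_{Z_1}+\Delta_{Z_1})=q^*(K_{Z_2}+\Delta_{Z_2})+E$, and pushing forward on both sides (using that $E$ is effective and $q$-exceptional) yields an isomorphism of section rings $R(Z_1/U,K_{Z_1}+\Delta_{Z_1})\cong R(Z_2/U,K_{Z_2}+\Delta_{Z_2})$. Applying this twice, first to the good minimal model $\psi:X\dasharrow Z$ of $(X,\Delta)$ over $Y$ (which is in particular $(K_X+\Delta)$-non-positive over $U$), and then to $Z\dasharrow Z'_N$, gives
$$R(X/U,K_X+\Delta)\cong R(Z/U,K_Z+\Delta_Z)\cong R(Z'_N/U,K_{Z'_N}+\Delta_{Z'_N}),$$
as required. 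I do not foresee any serious obstacle; the argument is essentially bookkeeping of discrepancies along the tower produced by iterating \eqref{c-mmp}.
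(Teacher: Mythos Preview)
Your compositional approach has a subtle but genuine gap in the step where you invoke \eqref{c-mmp}. You claim that \eqref{c-mmp} gives each induced step $Z'_i\dasharrow Z'_{i+1}$ as a $(K_{Z'_i}+\Delta_{Z'_i})$-negative contraction. But look at the hypotheses of \eqref{c-mmp}: it requires a \emph{klt} pair $(Y,\Theta)$ with $h^*(K_Y+\Theta)\sim_{\Q,U}K_Z+\Delta_Z$, and negativity is asserted only for that $\Delta_Z$. The pair $(Y'_i,B'_i+J'_i)$ is not klt (indeed $J'_i$ is only a nef class and $B'_i$ may have reduced components), so the MMP step $Y'_i\dasharrow Y'_{i+1}$ is run, as explained in \eqref{r-mmp2}, via an auxiliary klt pair $(Y'_i,\Theta_{t_j})$ with $\Theta_{t_j}\sim_{\Q,U}B'_i+J'_i+t_j(C'_i+A'_i)$ for some $t_j>0$. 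Applying \eqref{c-mmp} with this $\Theta$ yields that $Z'_i\dasharrow Z'_{i+1}$ is negative with respect to $K_{Z'_i}+\Delta_{Z'_i}+t_j\,h_i^*(C'_i+A'_i)$, not with respect to $K_{Z'_i}+\Delta_{Z'_i}$ itself. So your tower of inequalities is for perturbed boundaries, with different perturbations at each stage, and they do not telescope to give $(K_Z+\Delta_Z)$-non-positivity.

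The paper's argument bypasses this entirely by working with the composite map rather than the individual steps. By \eqref{c-z}(2), $Z\dasharrow Z'_N$ is already known to be a good minimal model of $K_Z+\Delta_Z+s_N h^*(C+A)$; since the MMP terminates with $s_{N+1}=0$, the divisor $K_{Z'_N}+\Delta_{Z'_N}+t\,h_N^*(C'_N+A'_N)$ is nef for every $0\le t\le s_N$, and one checks that $Z\dasharrow Z'_N$ is a minimal model of $K_Z+\Delta_Z+t\,h^*(C+A)$ for all $0<t<s_N$. In particular it is $(K_Z+\Delta_Z+t\,h^*(C+A))$-negative for all such $t$; letting $t\to 0$ gives $(K_Z+\Delta_Z)$-non-positivity. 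Your final paragraph deducing the ring isomorphism from non-positivity is fine; it is only the route to non-positivity that needs to be replaced by this limit argument.
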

\begin{proof} Immediate from the fact that $  Z\dasharrow   Z'_N$ is a minimal model of $K_{  Z}+\Delta_{  Z}+th^*(C+A)$  for all $0<t<s_N$.
\end{proof}
\begin{proof}[Proof of \eqref{t-lccl}]
By what we have seen above, $K_{Z'_N}+\Delta _{Z'_N}$ is nef over $U$ and in particular $K_{Z^0_N}+\Delta _{Z^0_N}$ is nef over $U^0$ where $Z^0_N=Z'_N\times _U U^0$. Notice moreover that 
$$R(Z^0_N/U^0,K_{Z^0_N}+\Delta _{Z^0_N})\cong R(X^0_N/U^0,K_{X^0}+\Delta ^0)$$ is finitely generated.
By \eqref{klt}, $K_{Z^0_N}+\Delta _{Z^0_N}$ has a good minimal model over $U^0$.
Thus, $K_{Z^0_N}+\Delta _{Z^0_N}$ is semi-ample over $U^0$.
By induction on the dimension $K_{T'_N}+\Delta _{T'_N}=(K_{Z'_N}+\Delta _{Z'_N})|_{T'_N}$ is semi-ample over $U$.
By \eqref{abundance}, $K_{Z'_N}+\Delta _{Z'_N}$ is semi-ample over $U$.
Thus 
$$R(X/U,K_X+\Delta )\cong R(Z'_N/U,K_{  Z'_N}+\Delta _{Z'_N}),$$ which is finitely generated and by \eqref{klt}, $(X,\Delta )$ has a good minimal model over $U$.
\end{proof}

\section{Proof of Corollaries}
\begin{proof}[Proof of \eqref{c-lcc}] It follows from the assumption that there exists a compactification $X^0\subset X^c$, such that $f^c:X^c\to U$ is projective. Let $\Delta^c$ be the closure of $\Delta^0$ in $X^c$. Let  $\pi: \bar{X} \to X^c$ be a log resolution of $(X^c,\Delta^c\cup {f^c}^{-1}(U\setminus U^0) )$. Write 
$$(\pi|_{\bar{X}^0})^*(K_{X^0}+\Delta^0)=K_{\bar{X}^0}+\bar{\Delta}^0-F^0, $$
where $\bar{X}^0=\pi^{-1}(X^0)$ and $\bar{\Delta}^0$ and $F^0$ are effective $\mathbb{Q}$-divisors without common components. There is a commutative diagram
\begin{diagram}
 \bar{X}^0  &\rTo   & \bar{X}&&  \\
    \dTo^{\pi|_{\bar{X}^0}}&    & \dTo^{\pi} &  & \\
X^0&\rTo&X^c&\rTo^{f^c}&U
\end{diagram}
Let us consider the pair $(\bar{X},\bar{\Delta})$, where $\bar{\Delta}$ is the closure of $\bar{\Delta}^0$ in $\bar{X}$. 
Thus the pair  $(\bar{X},\bar{\Delta})$ and the morphism $\bar X \to X^c$ satisfy the assumptions of \eqref{t-lccl} (where we choose the open set of $X^c$ to be $X^0$).  
Therefore, we can take $(X,\Delta)$ to be the relative log canonical model of $(\bar{X},\bar{\Delta})$ over $X^c$. Finally note that $(X,\Delta)\times _U U^0=(X^0,\Delta^0)$.
\end{proof} 

\begin{defn}\label{d-ss} Recall that a morphism $f:(X,\Delta)\to U$ from a lc pair to a smooth curve $U$ is {\it  semi-stable}, if for all $p\in U$, we have that $(X, {\rm Supp} (\Delta) +X_p )$ is log smooth and $X_p=f^{-1}(p)$ is reduced.
\end{defn}

\begin{proof}[Proof of \eqref{c-lcm}] Given $f^0:X^0 \to U$ an affine morphism of finite type, we can choose a closure $f: X^c\to U$ which is projective. 
It follows from \cite[7.17]{KM98} (which is essentially in \cite{KKMS73}) that after a surjective base change  $\theta:\tilde{U}\to U$, we can assume that there exists a log resolution $\tilde{\pi}: \bar{X}\to (\tilde{X}^c, \tilde{X}^c\setminus \tilde{X}^0)$ where $\tilde{X}^0=X^0\times_U \tilde{U}$ and $\tilde{X}^c$ is the normalization of the main component of $X^c\times_U \tilde{U}$, such that  $\bar{f}: (\bar{X}, {\rm Ex}(\tilde{\pi}) ) \to \tilde{U}$ is semi-stable.  

Denote by $\bar{X}^0=\tilde{\pi}^{-1}(\tilde{X}^0)$. We note that $\tilde{X}^0$ is  log canonical since $f^0$ is a lc morphism.  
\begin{diagram}
 \bar{X}^0  &\rTo   & \bar{X}&\rDashto&X  \\
    \dTo^{\tilde{\pi}|_{\bar{X}^0}}&    & \dTo^{\tilde{\pi}} & \ldTo^{\pi} & \dTo\\
\tilde{X}^0&\rTo&\tilde{X}^c&\rTo^{\tilde{f}^c}&\tilde{U}&\rTo^{\theta}&U
\end{diagram}
Write $(\tilde{\pi}|_{\bar{X}^0})^*K_{ \tilde{X}^0}=K_{\bar{X}^0}+E^0-F^0$, and take the closure of $E^0$ in $\bar{X}$ to be $\bar{\Delta}$. 
In particular, $(\bar{X},\bar{\Delta })$ is a family of semi-stable pairs over $\tilde{U}$. 
Therefore, $(\bar{X},\bar{\Delta}+\bar{X}_p)$ is dlt for any $p\in \tilde{U}$. 
Applying \eqref{t-lccl} to  $(\bar{X},\bar{\Delta})$ over $\tilde{X}^c$, where we choose the open set of $\tilde{X}^c$ to be   $\tilde{X}^0$, we conclude that we have a log canonical model $X={\rm Proj}R(\bar{X}/\tilde{X}^c,K_{\bar{X}}+\bar{\Delta})$ and a morphism $\pi:X\to \tilde{X}^c$. It is easy to see that the induced morphism $X\to \tilde{U}$ is an lc morphism. Since $\tilde{X}^0$ is log canonical, $\pi$ is an isomorphism over $\tilde{X}^0$. 
\end{proof}

\begin{proof}[Proof of \eqref{c-pm}] The proof is similar to the the proof of \eqref{c-lcm}. After compactifying the family and applying  semi-stable reduction, 
we can construct a semi-stable family $(\bar{X},\bar{\Delta})$ over  $\tilde{U}$ (where $\tilde U\to U$ is a finite surjective morphism), such that $(X^0,\Delta^0)\times_{U}\tilde{U}$ is the log canonical model of $(\bar{X},\bar{\Delta})\times _U U^0$. By $\eqref{t-lccl}$, there exists a $(K_{\bar{X}}+\bar{\Delta})$-log canonical model $\bar X\dasharrow X$ over $\tilde{U}$. This gives the required morphism $f:X\to \tilde{U}$.  \end{proof} 

\begin{proof}[Proof of \eqref{c-tri}]
If $\Delta '$ meets the generic fiber of $f$, then $K_X+\Delta ''$ is not pseudo-effective over $U$. In this case the result follows from (\ref{t-bchm}.3). 
Therefore, we may assume that  $\Delta '$ does not meet the generic fiber of $f$ so that $K_X+\Delta ''$ is pseudo-effective over $U$.
By \eqref{klt}, it suffices to verify that $R(X/U,K_X+\Delta'')$ is finitely generated. 

Let $U^0=U\setminus f(\Delta')$. Let $\nu:X'\to X$ be a log resolution of $(X,\Delta ' +\Delta '')$. We write $\nu^*(K_X+\Delta'')+\Gamma=K_{X'}+\Delta''_{X'}$, where $\Delta_{X'}''$ and $\Gamma$ are effective and do not have common components.
 \begin{claim}\label{c-c} After replacing $X'$ by a higher model, we can assume that there exists pairs  $(Z,\Delta''_Z)$ and $(Y,B+J)$ and a commutative diagram
\begin{diagram}
X'&\rDashto^{\mu}&Z&\rTo^h &Y  \\
 &\rdTo_\nu & & & \dTo^g \\
 &&X &\rTo^f & U, 
\end{diagram}
such that
\begin{enumerate} 
\item $\mu:X'\dasharrow Z$ is a birational contraction, $\Delta''_Z=\mu_*\Delta''_{X'}$ and $\mu$ is $(K_{X'}+\Delta''_{X'})$-negative;
\item $h: Z\to  Y$ is projective morphism such that  $K_{Z}+\Delta ''_{ Z}\sim_{\mathbb Q}  h^*(K_{ Y}+B+J)$,
where $(Y, B)$ is dlt and $J$ is a nef $\mathbb{Q}$-line bundle, 
\item let $T=\lfloor B \rfloor$, then there are $\mathbb Q$-divisors $C$, $\Sigma$ and $A$ on $Y$ such that $K_Y+B+J\sim _{\mathbb Q, U}C+\Sigma +A$, $A$ is ample over $U$, ${\rm Supp} (\Sigma )\subset {\rm Supp } (T)$ and $(Y,B+\epsilon C)$, $(Z,\Delta_Z''+\epsilon h^*C)$ are dlt for some $\epsilon >0$ , and
\item for any $0<\epsilon \ll 1$, 
there is a $K_Y+B+J+\epsilon (C+A)$ good minimal model over $U$ say $Y\dasharrow Y'$.
\end{enumerate}
\end{claim}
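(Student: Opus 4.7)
The plan is to closely mimic the proof of Lemma \ref{c}, applied to the dlt pair $(X', \Delta''_{X'})$ and the morphism $X' \to U$. First, I would apply the canonical bundle formula \eqref{t-can} to $(X', \Delta''_{X'})$ over $U$; after replacing $X'$ by a higher birational model (allowed by \eqref{l-birmm}, which ensures that the problem of constructing the data in the claim is insensitive to such replacements), this produces a projective morphism $h_{X'} : X' \to Y$ with $g : Y \to U$, an effective $\Q$-divisor $B$ with $(Y, B)$ log smooth dlt, and a $g$-nef $\Q$-line bundle $J$ such that $K_Y + B + J$ is big over $U$ and
\[ {h_{X'}}_* \mathcal{O}_{X'}(dm(K_{X'} + \Delta''_{X'})) \cong \mathcal{O}_Y(dm(K_Y + B + J)) \qquad \forall\, m \ge 0. \]

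Second, since $K_Y + B + J$ is big over $U$, I would write $K_Y + B + J \sim_{\Q, U} C + \Sigma + A$ with $A$ ample over $U$, $\Sigma \ge 0$ supported on $T = \lfloor B \rfloor$, and $C \ge 0$ sharing no components with $\Sigma$. After further blowing up so that $(Y, \mathrm{Supp}(B + C))$ is log smooth (again justified by \eqref{l-birmm}), $C$ contains no non-klt center of $(Y, B)$, and hence $h_{X'}^* C$ contains no non-klt center of $(X', \Delta''_{X'})$. For $0 < \epsilon \ll 1$, the pairs $(Y, B + \epsilon C)$ and $(X', \Delta''_{X'} + \epsilon h_{X'}^* C)$ are dlt with the same non-klt centers as $(Y, B)$ and $(X', \Delta''_{X'})$, $J + \epsilon A$ is ample over $U$, and $B + J + \epsilon(C + A) \sim_{\Q, U} \Theta_\epsilon$ for some klt pair $(Y, \Theta_\epsilon)$ with $K_Y + \Theta_\epsilon$ big over $U$; this verifies property (3).

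Third, to construct $(Z, \Delta''_Z)$, \eqref{t-can}(3) gives the isomorphism
\[ R(X'/Y, K_{X'} + \Delta''_{X'} + \epsilon h_{X'}^*(C + A)) \cong R(Y/Y, K_Y + B + J + \epsilon(C + A)), \]
which is trivially finitely generated over $Y$. By \eqref{l-11} combined with \eqref{klt}, a good minimal model $\mu : X' \dashrightarrow Z$ of $K_{X'} + \Delta''_{X'}$ over $Y$ exists; by the definition of a good minimal model, $\mu$ is $(K_{X'} + \Delta''_{X'})$-negative, $\Delta''_Z = \mu_* \Delta''_{X'}$, and $K_Z + \Delta''_Z \sim_{\Q, Y} 0$ for the induced morphism $h : Z \to Y$. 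Applying \eqref{l-proj} upgrades this to $K_Z + \Delta''_Z \sim_\Q h^*(K_Y + B + J)$, giving (1) and (2). Finally, since $(Y, \Theta_\epsilon)$ is klt with $K_Y + \Theta_\epsilon$ big over $U$, \eqref{t-bchm}(1) shows that the $(K_Y + \Theta_\epsilon)$-MMP with scaling of an ample divisor over $U$ terminates with a good minimal model $Y \dashrightarrow Y'$, verifying (4).

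The main technical subtlety, as in the proof of Lemma \ref{c}, is the need to replace $X'$ by higher birational models several times in order to accommodate the canonical bundle formula and the log smoothness of $(Y, \mathrm{Supp}(B + C))$; the key point is that \eqref{l-birmm} renders the conclusion of the claim insensitive to these replacements. No essentially new input beyond the toolkit developed for Lemma \ref{c} is expected.
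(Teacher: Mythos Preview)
Your proposal is correct and follows essentially the same approach as the paper, whose proof of the claim reads in full: ``The argument is similar to the ones for \eqref{c} and \eqref{l-start}.'' Your expansion tracks the proof of Lemma~\ref{c} step by step and then obtains property~(4) directly from \cite{BCHM10} via the klt pair $(Y,\Theta_\epsilon)$ with $K_Y+\Theta_\epsilon$ big over $U$; two minor points worth tightening are that the relevant case of \eqref{t-bchm} is (2) rather than (1), and that the claim records the dlt condition on $(Z,\Delta''_Z+\epsilon h^*C)$ rather than on $X'$, which follows since the MMP $\mu$ is an isomorphism at the generic point of every non-klt center.
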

\begin{proof} The argument is similar to the ones for \eqref{c} and \eqref{l-start}.
\end{proof}

Then by the arguments in Section 5, we can run a $(K_{Y'}+B'+J')$-MMP with scaling of $\epsilon (A'+ C')$  over $U$.  We obtain a sequence of flips and divisorial contractions
$$Y'=Y'_0\dasharrow Y'_1\dasharrow \cdots \dasharrow Y_n' \dasharrow\cdots ,$$ and a non-increasing sequence of rational numbers 
$\epsilon =s_0\geq s_1\geq s_2\geq \cdots >0$ such that 
$K_{{Y}_i'}+{B}_i'+{J}_i'+s(C'_i+A'_i)$ is nef over $U$ for all $s_i\geq s\geq s_{i+1}$ where $s_0=\epsilon$ and either the sequence is finite in which case we let $s_{N+1}=0$, or the sequence is infinite and we have $\lim s_i=0$. 

As in \eqref{c-z}, there exists a sequence of birational contractions $  Z \dasharrow   Z'=  Z '_0\dasharrow   Z_1'\dasharrow   Z _2'\dasharrow \ldots$ 
and morphisms $h_i:  Z'_i\to   Y'_i$ such that \begin{enumerate}
\item $Z\dasharrow Z'_i$ is a good minimal model of $K_{  Z}+\Delta'' _{  Z}+  s_ih^*(C+A)$ over $U$ and 
\item $K_{  Z '_i}+\Delta ''_{  Z _i'}\sim_{\Q,U} h_i^*(K_{  Y'_i}+  B '_i+  J'_i)$.
\end{enumerate} 

In particular, we can assume that for $i$ sufficiently big,  $Z\dasharrow Z_i$ contracts the birational image of any divisor $\Gamma$ which is contained in ${\bf B}_-(K_Z+\Delta''_Z/U)$. Therefore, if we let $\Delta'_{Z}=\mu_*\nu^*\Delta'$, then $K_{Z'_i}+\Delta'_{Z'_i}+\Delta''_{Z_i'}\sim_{\mathbb{Q},U} 0,$ and $(Z'_i, \Delta'_{Z'_i}+\Delta''_{Z'_i})$ is lc where $\Delta'_{Z'_i}$ and $\Delta''_{Z'_i}$ are the push forward of $\Delta_Z'$ and $\Delta''_Z$ to $Z'_i$.

The rest of the argument also closely follows Section 5. Fix a component $G$ of $S=\lfloor \Delta''\rfloor$ such that $G$ is not contained in  
$N_\sigma (K_X+\Delta''/U)$. If we denote by $G'_i$  the birational 
transform of $G$ on $Z_i'$,  $G_i^{\nu}\to G_i'$ a $\mathbb{Q}$-factorialization, $\Delta'_{G '_i}=\Delta'_{  Z _i'}|_{G'_i}$ and  $\Delta''_{G '_i}={\rm Diff}_{G '_i}(\Delta''_{  Z _i'}-G'_i)$, then by a discrepancy computation, we may assume that the induced rational maps $G'_i\dasharrow G'_{i+1}$ are isomorphisms in codimension $1$ for $i\ge i_0\ge 1$. 

Let $\Delta'_{G^{\nu}_i}$ and $\Delta''_{G^{\nu}_i}$ be the pull back of $\Delta'_{G'_i}$ and $\Delta''_{G'_i}$ to $G^{\nu}_i$. Then $K_{G_i^{\nu}}+\Delta '_{G_i^{\nu}}+\Delta''_{G_i^{\nu}}\sim_{\mathbb{Q},U}0$ and $(G^{\nu}_i,\Delta'_{G^{\nu}_i}+\Delta''_{G^{\nu}_i})$ is a log canonical pair.
Since we are assuming \eqref{c-tri}$_{n-1}$, we may assume that each $K_{G^{\nu}_i}+\Delta''_{G ^{\nu}_i}$ has a good minimal model $\bar{G}$ over $U$ which is obtained by running a sequence of MMP with scaling of an ample divisor.  Then $K_{\bar G}+\Delta _{\bar G}''$ is dlt and $K_{\bar G}+\Delta _{\bar G}'+\Delta _{\bar G}''\sim _{\Q , U}0$ where $\Delta _{\bar G}'$ and $\Delta'' _{\bar G}$ are the push forwards of  $\Delta'_{G^{\nu}_i}$ and $\Delta''_{G^{\nu}_i}$. 
As $N_{\sigma}(K_{G^{\nu}_i}+\Delta''_{G ^{\nu}_i}/U)=0$, we know $G^{\nu}_i\dasharrow \bar{G}$ is an isomorphism in codimension 1 for any $i\ge i_0$. Let  $\chi:\bar G \to \bar D={\rm Proj}R(\bar G /U, K_{\bar G}+\Delta _{\bar G}'')$ be the corresponding morphism over $U$. Thus proceeding as in the proof of \eqref{l-ismc}, it suffices to verify that $  Z '_i\dasharrow   Z'_{i+1}$ is an isomorphism on a neighborhood of $\lfloor \Delta''_{  Z _i'}\rfloor$ for $i \gg 0$. We first need to prove the following lemma.

\begin{lemma}\label{l-out}
For $s_i$ small enough,  $(\bar{G},\Delta''_{\bar{G}}+s_i\bar{\Phi})$ has a good minimal model over $\bar{D}$, where $\bar{\Phi}$ is the the push forward of $\Phi_i:=h_i^*(A'_i+C'_i)|_{G^{\nu}_i}$ to $\bar G$. 
\end{lemma}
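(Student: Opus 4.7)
The plan is to emulate the proof of Lemma \eqref{l-rg}, applying the inductive hypothesis \eqref{t-lccl}$_{n-1}$ to the dlt pair $(\bar G,\Delta''_{\bar G}+s_i\bar\Phi)$ viewed over $\bar D$ with the open subset $\bar D^0:=\bar D\times_U U^0$, where $U^0=U\setminus f(\Delta')$ is the open set fixed at the start of the proof of \eqref{c-tri}.

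First I would arrange, by a sufficiently general choice of $C$ as in Claim \eqref{c-c}, that for $0<s_i\ll 1$ the pair $(\bar G,\Delta''_{\bar G}+s_i\bar\Phi)$ is dlt and shares its non-klt centers with $(\bar G,\Delta''_{\bar G})$. Next I would verify semi-ampleness of $K_{\bar G}+\Delta''_{\bar G}+s_i\bar\Phi$ over $\bar D^0$. Because the sequence $Y'=Y'_0\dasharrow Y'_1\dasharrow\cdots$ is produced by a $(K_{Y'}+B'+J')$-MMP over $U$ with scaling of an ample divisor, starting from the good minimal model $Y'$ of $K_Y+B+J+\epsilon(C+A)$, Corollary \eqref{dltmmp1} together with Lemma \eqref{l-gm} guarantees that $K_{Y'_i}+B'_i+J'_i+s_i(A'_i+C'_i)$ is semi-ample over $U$ for every $i$. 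Pulling back along the induced morphism $G^\nu_i\to Y'_i$ shows that $K_{G^\nu_i}+\Delta''_{G^\nu_i}+s_i\Phi_i$ is semi-ample over $U$, and since $G^\nu_i\dasharrow\bar G$ is an isomorphism in codimension one---and moreover an isomorphism over $U^0$, where the scaling MMP induces no flips---\eqref{l-gm} transfers this to semi-ampleness of $K_{\bar G}+\Delta''_{\bar G}+s_i\bar\Phi$ over $U^0$, hence over $\bar D^0$.

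Second, I would verify the non-klt center hypothesis: every non-klt center of $(\bar G,\Delta''_{\bar G}+s_i\bar\Phi)$ meets $\chi^{-1}(\bar D^0)=\bar G^0$. A key intermediate observation is that $\Delta'_{\bar G}$ is $\chi$-exceptional: combining $K_{\bar G}+\Delta'_{\bar G}+\Delta''_{\bar G}\sim_{\mathbb{Q},U}0$ with $K_{\bar G}+\Delta''_{\bar G}\sim_{\mathbb{Q},\bar D}0$ forces $\Delta'_{\bar G}\sim_{\mathbb{Q},\bar D}0$, so the effective divisor $\Delta'_{\bar G}$ has vanishing pushforward to $\bar D$. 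Together with the lc condition on $(\bar G,\Delta'_{\bar G}+\Delta''_{\bar G})$, which prevents any stratum of $\lfloor\Delta''_{\bar G}\rfloor$ from lying inside $\mathrm{Supp}(\Delta'_{\bar G})$, one deduces that each such stratum meets the preimage of $\bar D^0$. With both hypotheses in hand, Theorem \eqref{t-lccl}$_{n-1}$ applied over $\bar D$ yields a good minimal model of $(\bar G,\Delta''_{\bar G}+s_i\bar\Phi)$ over $\bar D$.

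The main obstacle I anticipate is the non-klt center verification: although $\chi$-exceptionality of $\Delta'_{\bar G}$ places its support over a codimension $\geq 2$ subset of $\bar D$, one must still rule out that a stratum of $\lfloor\Delta''_{\bar G}\rfloor$ is trapped over the preimage of $f(\Delta')$ via a \emph{different} vertical configuration not coming from $\Delta'_{\bar G}$. Carefully tracking strata through the chain $X\dasharrow Z\dasharrow Z'_i\supset G'_i\leftarrow G^\nu_i\dasharrow\bar G$ and exploiting the crepant relation $K_X+\Delta'+\Delta''\sim_{\mathbb{Q},U}0$ should handle this; if not, one may replace \eqref{t-lccl}$_{n-1}$ by \eqref{c-tri}$_{n-1}$ applied to a suitable triple on $\bar G$ over $\bar D$, which dispenses with the non-klt center hypothesis at the cost of constructing an auxiliary effective divisor representing $-s_i\bar\Phi$ modulo $\chi^*\mathrm{Pic}_{\mathbb{Q}}(\bar D)$.
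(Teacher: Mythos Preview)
Your semi-ampleness step over $\bar D^0$ is fine, but the non-klt center verification does not go through, and the obstacle you flag is real rather than a technicality. Two points:

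\emph{First}, the ``$\chi$-exceptionality'' claim is not correct. The morphism $\chi:\bar G\to\bar D$ is the semi-ample fibration of $K_{\bar G}+\Delta''_{\bar G}$ and need not be birational, so ``$\Delta'_{\bar G}\sim_{\mathbb Q,\bar D}0$ and effective $\Rightarrow$ vanishing pushforward'' is not a valid inference. What \emph{is} true, and what the paper uses, is that since $-\Delta'_{\bar G}\sim_{\mathbb Q,U}K_{\bar G}+\Delta''_{\bar G}$ is $\chi$-nef (indeed $\chi$-trivial) and $\Delta'_{\bar G}\geq 0$, the support of $\Delta'_{\bar G}$ is a union of fibers: ${\rm Supp}(\Delta'_{\bar G})=\chi^{-1}\chi(\Delta'_{\bar G})$.

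\emph{Second}, and more seriously, your open set $\bar D^0=\bar D\times_U U^0$ is too small. A non-klt center $Z$ of $(\bar G,\Delta''_{\bar G})$ could very well sit entirely over $f(\Delta')\subset U$ without being contained in ${\rm Supp}(\Delta'_{\bar G})$; the lc condition on $(\bar G,\Delta'_{\bar G}+\Delta''_{\bar G})$ only rules out the latter. Neither ``tracking strata through the chain'' nor the crepant relation on $X$ forces $Z$ to dominate $U$. The paper's remedy is to replace $\bar D^0$ by the larger open set $\bar D\setminus\chi(\Delta'_{\bar G})$. With this choice, the non-klt center check is immediate from the fiber-union property above. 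The price is that one must now produce a good model over this larger open set, and here your semi-ampleness-over-$U^0$ argument no longer suffices. Instead, the paper passes to the Stein factorization $\psi:\bar G\to V\to U$: over $V\setminus\psi(\Delta'_{\bar G})$ one has $K_{\bar G}+\Delta''_{\bar G}\sim_{\mathbb Q}0$, whence $\bar D\setminus\chi(\Delta'_{\bar G})$ is isomorphic to its image in $V$; since $V\to U$ is finite, the finite generation of $R(G^\nu_i/U,K_{G^\nu_i}+\Delta''_{G^\nu_i}+s_i\Phi_i)$ transports to finite generation over $V$, and then Theorem~\ref{klt} yields the good model over $\bar D\setminus\chi(\Delta'_{\bar G})$. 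Your fallback via \eqref{c-tri}$_{n-1}$ would require producing an effective divisor $\sim_{\mathbb Q,\bar D}-s_i\bar\Phi$ keeping the total pair lc, which you have not done and which is not obviously available.
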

\begin{proof} Denote by $\psi: \bar{G}\to V$ the Stein factorization of $\bar{G}\to U$. By choosing $s_i$ small enough, we can assume that $(\bar{G},\Delta''_{\bar{G}})$ and $(\bar{G},\Delta''_{\bar{G}}+s_i\bar{\Phi})$ have the same non-klt centers. We claim that for any non-klt center $Z$ of $(\bar{G},\Delta''_{\bar{G}}+s_i\bar{\Phi})$, $\chi(Z)$ is not contained in $\chi(\Delta'_{\bar{G}})$ (where $\chi:\bar G \to \bar D$ is defined above). 
In fact, because $Z$ is a non-klt center of $(\bar{G},\Delta''_{\bar{G}})$, as $(\bar{G},\Delta'_{\bar{G}}+\Delta''_{\bar{G}})$ is log canonical, $Z$ is not contained in ${\rm Supp}(\Delta'_{\bar{G}})$. Therefore we only need to show ${\rm Supp}(\Delta'_{\bar{G}})=\chi^{-1}\chi(\Delta'_{\bar{G}})$. But this immediately follows from the fact that $-\Delta'_{\bar{G}}\sim _{\Q , U}K_{\bar G}+\Delta '' _{\bar G}$ is nef over $\bar{D}$. By a similar argument  ${\rm Supp}(\Delta'_{\bar{G}})$ coincides with the pre-image of $\psi(\Delta'_{\bar{G}})\subset V$.

Over $V\setminus\psi(\Delta'_{\bar{G}})$, $K_{\bar{G}}+\Delta''_{\bar{G}}$ is $\mathbb{Q}$-linearly equivalent to $0$, therefore, the restriction of $ \bar{D}={\rm Proj}R(\bar{G}/U, K_{\bar{G}}+\Delta''_{\bar{G}})\to U$ induces an isomorphism of $\bar{D}\setminus\chi(\Delta'_{\bar{G}})$ to its image  in $V$. Because $G^{\nu}_i \dasharrow \bar{G}$ is an isomorphism in codimension 1 and $V\to U$ is finite, the fact that $R({G}^{\nu}_i/U,K_{G^{\nu}}+\Delta''_{G^{\nu}}+s_i\Phi_i)$ is a finitely generated $\mathcal{O}_U$-algebra implies $R(\bar{G}/V,K_{\bar{G}}+\Delta''_{\bar{G}}+s_i\bar{\Phi})$ is a finitely generated $\mathcal{O}_V$-algebra. Thus by \eqref{klt}, $(\bar{G},\Delta''_{\bar{G}}+s_i\bar{\Phi})$ has a good model over $\bar{D}\setminus\chi(\Delta'_{\bar{G}})$. As $\bar{D}\setminus\chi(\Delta'_{\bar{G}})$ intersects with the image of any non-klt center of  $(\bar{G},\Delta''_{\bar{G}}+s_i\bar{\Phi})$, the lemma follows from \eqref{t-lccl}.
\end{proof}

As the proof of \eqref{l-ismc} shows, it follows that each $  Z'_i\dasharrow   Z'_{i+1}$  is an isomorphism on a neighborhood of $\lfloor   \Delta ''_{Z'_i}\rfloor$ for $i\gg 0$.
Then by the same reason as the one in the last paragraph of the proof of \eqref{p-term}, this implies that the given $(K_{Y'}+  B'+ J')$-MMP with scaling over $U$ terminates.
It follows that $K_{Z'_N}+\Delta ''_{Z'_N}$ is nef over $U$ and the restriction of $K_{Z'_N}+\Delta''_{Z'_N} $ over $U^0$ is $\mathbb{Q}$-linear equivalent to 0. Since $K_{Z'_N}+\Delta''_{Z'_N}\sim_{\mathbb{Q}}-\Delta'_{Z'_N}$ is nef over $U$, it follows from the argument of \eqref{l-out} that the image of any non-klt center of $(Z'_N,\Delta''_{Z'_N})$ intersects $U^0$. 
By \eqref{t-lccl}, $K_{Z'_N}+\Delta ''_{Z'_N}$ is semi-ample over $U$. Since $X'\dasharrow Z'_N$ is $(K_{X'}+\Delta '' _{X'})$-non-positive, we have that
$$R(X/U, K_X+\Delta '')\cong R(X'/U, K_{X'}+\Delta ''_{X'})$$ is finitely generated.
By \eqref{klt} and \eqref{dltterm}, the proof is complete.
\end{proof}

\section{Remarks on the moduli functor}\label{s-func}
Over the last few decades, the study of the moduli functor of canonical polarized stable schemes has attracted a lot of interest. For simplicity, we restrict ourselves to the case when there is no boundary in the definition of the functor.  
\begin{defn}[Moduli of slc models, cf. {\cite[29]{Kollar10b}}]
Let $H(m)$ be an integer valued function. The moduli functor of semi log canonical models with Hilbert function $H$ is
\[
\mathcal{M}^{slc}_H(S)=\left\{
\begin{array}{cccccccc}
\mbox{ flat morphisms of proper families $X \to S$, }\\ 
\mbox{ fibers are  slc models with ample canonical class  	}\\
\mbox{and Hilbert function $H(m)$, $\omega_X$ is flat over $S$ }\\
\mbox{  and commutes  with base change.}\\	
\end{array}
\right\}
\]
\end{defn}
We refer the forthcoming book \cite{Kollar10} for a detailed discussion of this subject and to \cite{Kollar10b} for a more concise survey.
In this section, we will explain how to apply \eqref{c-pm}  together with  the arguments in \cite[Chapter 3]{Kollar10} to verify that the moduli functor $\mathcal{M}^{slc}_H$ satisfies the valuative criterion for  properness.

We assume that over a germ $C^0=C\setminus \{p\}$ of a smooth curve, we have a family of stable schemes $X^0\in \mathcal{M}^{slc}_H(C^0)$. 
Our goal is, to show that after an appropriate base change, we can compactify the family to get a family of stable schemes over $C$. Due to examples of Koll\'ar (cf. \cite[Proposition 1]{Kollar07b}), the pluricanonical ring of a slc variety is not necessarily finitely generated. Therefore, for general slc varieties, we can not directly run the minimal model program  to find the relative canonical model. 

Instead, Koll\'ar suggests the following strategy:
Consider the normalization, $n:\bar{X}^0=\sqcup_j {\bar{X}^0_j} \to X^0$ and write $n^*K_{X^0}=K_{\bar{X}^0}+\bar{\Gamma}^0$, where $\bar{\Gamma}^0=\sqcup_j \bar{\Gamma}^0_j$ denotes the double locus. Since $(\bar{X}^0,\bar{\Gamma}^0)$ comes from the normalization of $X^0$, we have the gluing data $(\bar{X}^0,\bar{\Gamma}^0,\tau^0)$ such that the quotient is $X^0$. 

\noindent{\bf Step 1:} Applying \eqref{c-pm}, we conclude that after a base change of $C$, there exist log canonical models  $(\bar{X}_j,\bar{{\Gamma}}_j)$ over $C$ which extend $(\bar{X}^0_j, \bar{\Gamma}^0_j)$ and admit a  projective lc morphism $\bar{X}_j\to C$. Let $\bar{X}=\sqcup \bar{X}_j$ and $\bar{\Gamma}$ the closure of $\bar{\Gamma}^0$ in $\bar{X}$.

\noindent{\bf Step 2:} We extend the gluing data $(\bar{X}^0,\bar{\Gamma}^0,\tau^0)$  to a  gluing data $(\bar{X},\bar{\Gamma},\tau)$. Indeed, we only need to show that we can extend the involution $\tau^0: \bar{\Gamma}^0\to \bar{\Gamma}^0$ to an involution $\tau: \bar{\Gamma}\to \bar{\Gamma}$, which is verified by the following lemma.
\begin{lemma} 
Let $(Y,\Gamma_Y)$ and $(Z,\Gamma_Z)$ be two projective lc morphisms (see \eqref{d-lc}) over a smooth curve $C$, such that $K_Y+\Gamma_Y$ and $K_Z+\Gamma_Z$ are ample over $C$.
We assume there exists a dense open set $C^0=C\setminus \{p\}$, such that the base changes $(Y^0,\Gamma^0_Y)$ and $(Z^0,\Gamma_Z^0)$ over $C^0$ are isomorphic. Then $(Y,\Gamma_Y)$ and $(Z,\Gamma_Z)$  are isomorphic.
\end{lemma}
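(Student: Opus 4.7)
The plan is to realize both $Y$ and $Z$ as $\mathrm{Proj}_C$ of one and the same graded section ring computed on a common log resolution, and to conclude by uniqueness of the relative log canonical model.

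I would first exploit the lc morphism hypothesis to force rigidity on the vertical part. Because $(Y,\Gamma_Y+Y_p)$ is log canonical and $Y_p=f^*p$ is a Cartier divisor each of whose components already occurs with multiplicity $\geq 1$ in $Y_p$, the lc condition forces $Y_p$ to be reduced and $\Gamma_Y$ to have no component contained in $Y_p$; the analogous statement holds on the $Z$ side. Since $Y_p\sim_{\mathbb{Q},C}0$, the ample divisor $K_Y+\Gamma_Y$ is $\mathbb{Q}$-linearly equivalent over $C$ to $K_Y+\Gamma_Y+Y_p$, so
\[
Y=\mathrm{Proj}_C\,R(Y/C,K_Y+\Gamma_Y+Y_p),
\]
and similarly $Z=\mathrm{Proj}_C\,R(Z/C,K_Z+\Gamma_Z+Z_p)$.

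Next, letting $\phi\colon Y\dashrightarrow Z$ extend the given isomorphism over $C^0$, I would choose a log resolution of the graph $\mu\colon W\to Y$, $\nu\colon W\to Z$ that is an isomorphism over $C^0$, set $F$ equal to the reduced sum of all prime divisors of $W$ mapping to $p$, and let $\Gamma_W^{\mathrm{st}}$ be the strict transform of $\Gamma^0$. Both $F$ and $\Gamma_W^{\mathrm{st}}$ are intrinsic to $W$ (independent of whether we view it birationally over $Y$ or $Z$), because $\mu$ and $\nu$ are isomorphisms over $C^0$ so every $\mu$- or $\nu$-exceptional divisor lies in $F$. A direct discrepancy computation using $a(E;Y,\Gamma_Y+Y_p)\geq -1$ for every $\mu$-exceptional $E$ yields
\[
\mu^*(K_Y+\Gamma_Y+Y_p)=K_W+\Gamma_W^{\mathrm{st}}+F-E_\mu,
\]
with $E_\mu$ effective and $\mu$-exceptional; the symmetric identity with an effective $\nu$-exceptional $E_\nu$ holds on the $\nu$ side. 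Because $\mu_*\mathcal{O}_W(mE_\mu)=\mathcal{O}_Y$ by normality of $Y$, for $m$ sufficiently divisible one obtains
\[
R(W/C,K_W+\Gamma_W^{\mathrm{st}}+F)\cong R(Y/C,K_Y+\Gamma_Y+Y_p)\cong R(Z/C,K_Z+\Gamma_Z+Z_p),
\]
and applying $\mathrm{Proj}_C$ produces an isomorphism $Y\cong Z$ over $C$. By construction this isomorphism restricts to the given one over $C^0$ and therefore sends $\Gamma_Y$ to $\Gamma_Z$.

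The only nontrivial point I anticipate is the rigidity step — that the lc morphism hypothesis by itself forces $Y_p$ reduced and $\Gamma_Y$ horizontal. Without this, the exceptional term $E_\mu$ above could fail to be effective, and the two section rings would not be canonically identified. Beyond that, the argument is a standard application of uniqueness of the log canonical model in the relative setting and does not need to invoke any of the heavier results of the paper.
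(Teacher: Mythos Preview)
Your argument is correct and follows essentially the same strategy as the paper: pass to a common model $W$, identify the relevant relative log canonical section rings on $W$, and conclude by taking $\mathrm{Proj}_C$. The only difference is packaging: the paper works directly with $(Y,\Gamma_Y)$ and invokes the sharper fact \cite[7.2(2)]{KM98} that for an lc morphism every exceptional divisor with center in a fiber has discrepancy $\geq 0$, so the crepant boundary $\Phi_Y$ on $W$ equals the strict transform of $\Gamma_Y$; you instead add the fiber $Y_p$ and use only the lc bound $\geq -1$. These are equivalent, since for vertical $E$ one has $a(E;Y,\Gamma_Y)=a(E;Y,\Gamma_Y+Y_p)+\mathrm{mult}_E(\mu^*Y_p)\geq -1+1=0$.

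One small imprecision to fix: a \emph{log resolution} of the graph need not be an isomorphism over $C^0$ when $Y^0$ is singular, so horizontal $\mu$-exceptional divisors could appear outside $F$ and $E_\mu$ might fail to be effective as written. Either take $W$ to be the normalization of the graph closure in $Y\times_C Z$ (this is all you need, since only normality of $W$ is used for $\mu_*\mathcal O_W(mE_\mu)=\mathcal O_Y$), or observe that any horizontal exceptional divisor is exceptional for both $\mu$ and $\nu$ with the same discrepancy (computed over $C^0$), hence contributes identically to both sides and can be absorbed into the common boundary.
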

\begin{proof} 
 Let $p:W\to Y$ and $q: W\to Z$ be a common resolution of $Y$ and $Z$. Write $p^*(K_Y+\Gamma_Y)+E_Y=K_W+\Phi_Y$ and $q^*(K_Z+\Gamma_Z)+E_Z=K_W+\Phi_Z$ such that $E_Y$ and $\Phi_Y$ (resp. $E_Z$ and $\Phi_Z$) are effective $\mathbb{Q}$-divisors without common components. Since $(Y,\Gamma_Y)\to C$ is an lc morphism, any divisor $E$ whose center on $Y$ is contained in a fiber has discrepancy $a(E, Y,\Gamma)\ge 0$ (cf. \cite[7.2(2)]{KM98}). Therefore,  $p_*^{-1}(\Gamma_Y)=\Phi_Y=\Phi_Z=q_*^{-1}(\Gamma_Z)$ and so
 $$Y\cong {\rm Proj} R(W/C, K_W+\Phi_Y)\cong {\rm Proj} R(W/C, K_W+\Phi_Z)\cong Z.$$  
\end{proof}

\noindent{\bf Step 3:} Applying the gluing theory developed in \cite[Chapter 3]{Kollar10}, Koll\'ar shows that the gluing data $(\bar{X},\bar{\Gamma},{\tau})$ yields a quotient $X$ over $C$, which is slc. Then $X/C$ is the corresponding family of canonical polarized slc models which extends $X^0/C^0$.

 \end{document}